\numberwithin{equation}{section}
\newtheorem{theorem}{Theorem}[section]
\newtheorem{lemma}[theorem]{Lemma}
\newtheorem{corollary}[theorem]{Corollary}
\renewcommand{\mathcal}{\mathscr}
\newcommand{\N}{\mathbb{N}}
\newcommand{\R}{\mathbb{R}}
\renewcommand{\H}{\mathbb{H}}
\newcommand{\B}{\mathcal{B}}
\newcommand{\D}{\mathcal{D}}
\newcommand{\E}{\mathcal{E}}
\renewcommand{\epsilon}{\varepsilon}
\newcommand{\eps}{\varepsilon}
\newcommand{\Per}{{\rm{Per}}}
\newcommand{\dist}{{\rm{dist}}}
\renewcommand{\leq}{\leqslant}
\renewcommand{\le}{\leqslant}
\renewcommand{\ge}{\geqslant}
\date{}
\title{Continuity and density results\\
for a one-phase nonlocal free boundary problem\thanks{It
is a great pleasure to thank
Filippo Cagnetti and Ovidiu Savin for several pleasant
and stimulating conversations.
This work has been supported by EPSRC grant EP/K024566/1
\emph{Monotonicity formula methods for nonlinear PDEs},
ERPem ``PECRE Postdoctoral and Early Career Researcher Exchanges'', 
Humboldt Foundation, ERC grant 277749 \emph{EPSILON Elliptic
Pde's and Symmetry of Interfaces and Layers for Odd Nonlinearities}
and PRIN grant 2012 \emph{Critical Point Theory
and Perturbative Methods for Nonlinear Differential Equations}.}} 
\author{Serena Dipierro\thanks{School of Mathematics and Statistics,
University of Melbourne
813 Swanston Street, Parkville VIC 3010, Australia. Email: 
{\tt{sdipierro@unimelb.edu.au}}.}
and Enrico Valdinoci\thanks{School of Mathematics and Statistics,
University of Melbourne
813 Swanston Street, Parkville VIC 3010, Australia, and
Weierstra{\ss} Institut
f\"ur Angewandte Analysis und 
Stochastik, Mohrenstra{\ss}e 39, 10117 Berlin, Germany, 
and Dipartimento di Matematica Federigo Enriques,
Universit\`a degli Studi di Milano,
Via Cesare Saldini 50, 20133 Milano, Italy, and 
Istituto di Matematica Applicata e Tecnologie Informatiche Enrico Magenes,
Consiglio Nazionale delle Ricerche,
Via Ferrata 1, 27100 Pavia, Italy.
Email: {\tt{enrico@math.utexas.edu}}.
}}
\begin{document}

\maketitle

\begin{abstract}
We consider a one-phase nonlocal free boundary problem
obtained by the superposition of a fractional Dirichlet energy
plus a nonlocal perimeter functional. We prove that the minimizers
are H\"older continuous and the free boundary has positive density
from both sides.

For this, we also introduce a new notion of fractional harmonic
replacement in the extended variables and we study its basic
properties.
\end{abstract}

\medskip

{\footnotesize
\noindent {\em 2010 Mathematics Subject Classification:}
35R35, 49N60, 35R11, 35A15.\smallskip

\noindent {\em Keywords:} Free boundary problems, nonlocal minimal
surfaces, fractional operators, regularity theory, fractional harmonic replacement.
}\medskip

\tableofcontents

\section{Introduction}\label{INT}

The recent research has payed a great attention to a class
of nonlocal problems arising in both pure and applied mathematics.
A natural setting in which nonlocal questions arise is given
by the class of free boundary problems. Roughly speaking,
many free boundary problems are built by the competition of two (or
more) competing terms: for instance, an elastic (or ferromagnetic)
energy can be combined with a tension effect (in this setting,
the ferromagnetic energy favors the preservation of
the values of a state parameter~$u$, while the tension effect
tends to make the interface given by the level sets of~$u$
as small as possible).

In order to take into account possible long-range interactions,
some nonlocal energies have been considered in these types
of free boundary problems.
In particular, in~\cite{DSV2}, 
a new energy functional was considered, as the sum of
a fractional Dirichlet energy, with fractional exponent~$s\in(0,1)$,
and a fractional perimeter, with fractional exponent~$\sigma\in(0,1)$.
When~$s\to1$, and when~$\sigma\to0$ or~$\sigma\to1$,
the energy functional becomes the classical free boundary
energy considered in~\cite{AC, ACF, ACKS}. An intermediate problem,
with a local Dirichlet energy plus a fractional perimeter
has been studied in~\cite{CSV}.

Some results of classical flavor have been proved in~\cite{DSV2},
such as, among the others,\footnote{We take this opportunity to amend some minor
inconsistencies in~\cite{DSV2}.

First of all, in Theorem~1.2 and in Lemma~8.3, the condition
``${\overline{u}}_m$ is the extension of~$u_m$''
has to be intended ``${\overline{u}}_m$ is the extension of~$u_m
\in C(\R^n)$''.

Then, in the statement of Lemma~3.2 ``if~$u\in C(\R^n)$'' has to
be placed in the beginning, and in the proof of Lemma~3.2
the expression~``$\displaystyle\min_{B_r(x_o)} u$''
needs to be replaced by~``$\displaystyle\min_{\overline{B_r(x_o)}} u$''.}
a monotonicity formula for the minimizers, some glueing lemmata,
some uniform energy bounds, convergence results,
a regularity theory for the planar cones
and a trivialization result for the flat case.
On the other hand, in~\cite{DSV2} no result was proved
concerning the regularity of the minimizers and the density properties
of the free boundary. These type of results are indeed
quite hard to obtain, due to the strong nonlocal feature of
the problem: for instance, differently from the classical case,
the nonlocal Dirichlet energy provides nontrivial interactions
between the positivity and negativity sets of the functions,
and a local modification
of the free boundary produces global consequences in
the fractional perimeter.

Goal of this paper is then to provide regularity and density
results, at least in the case of the one-phase problem
(i.e. when the boundary data are nonnegative).

The mathematical setting in which we work is the following.
Let~$s$, $\sigma\in(0,1)$, 
and~$\Omega\subset\R^n$ be a bounded domain with Lipschitz boundary.
Following~\cite{DSV2}, we define
$$ {\mathcal{F}}_\Omega(u,E):=
\iint_{Q_\Omega}\frac{|u(x)-u(y)|^2}{|x-y|^{n+2s}}\,dx\,dy+
\Per_\sigma(E,\Omega),$$
where
$$ Q_\Omega:=(\Omega\times\Omega)\cup
\big( (\R^n\setminus\Omega)\times\Omega \big)\cup
\big( \Omega\times(\R^n\setminus\Omega)\big)$$
and~$\Per_\sigma(E,\Omega)$ denotes the fractional perimeter
of~$E$ in~$\Omega$ (see~\cite{CRS}
or formulas (1.2) and~(1.3) in~\cite{DSV2}), that is
\begin{equation}\label{oiU67jsdfgH} 
\Per_\sigma(E,\Omega):=L(E\cap \Omega, \Omega\setminus E)
+ L(E\cap \Omega, (\R^n\setminus E)\setminus\Omega)+
L (E\setminus\Omega, \Omega\setminus E),\end{equation}
where, for any disjoint sets~$A$, $B\subseteq\R^n$,
$$ L(A,B):=\iint_{A\times B}\frac{dx\,dy}{|x-y|^{n+\sigma}}.$$
All sets and
functions are implicitly assumed to be measurable from now on.

Let~$E\subseteq\R^n$ and~$u:\R^n\to\R$.
We say that~$(u,E)$ is an admissible pair if~$u\ge0$ a.e. in~$E$
and~$u\le0$ a.e. in~$\R^n\setminus E$. 

Also, we say that~$(u,E)$ is
a minimizing pair in~$\Omega$
if~${\mathcal{F}}_\Omega(u,E)<+\infty$
and
$$ {\mathcal{F}}_\Omega(u,E)\le
{\mathcal{F}}_\Omega(v,F)$$
for any admissible pair~$(v,F)$ such that:
\begin{itemize}
\item $u-v\in H^s(\R^n)$, 
\item $u=v$
a.e. in~$\R^n\setminus\Omega$, and
\item $E\setminus\Omega=F\setminus\Omega$.
\end{itemize}
Roughly speaking, a pair~$(u,E)$ is admissible if~$E$ is the positivity
set of~$u$, and it is minimizing if it has minimal energy
among all the possible competing admissible pairs
that coincide outside~$\Omega$. For the existence of minimizing pairs
see Lemma~3.1 in~\cite{DSV2}.

We remark that this minimizing problem is nontrivial even
in the one-phase case, i.e. when the boundary datum~$u$ is nonnegative,
since the set~$E$ is not necessarily trivially prescribed outside~$\Omega$.

In this setting, our main result is the following:

\begin{theorem}[Density estimates and continuity for
one-phase minimizers]\label{DEC}
Assume that~$(u,E)$ is minimizing in~$B_1$, with~$u\ge0$
a.e. in~$\R^n\setminus B_1$ and~$0\in \partial E$.
Assume also that
\begin{equation}\label{GROW}
\int_{\R^n} \frac{|u(x)|}{1+|x|^{n+2s}}\,dx\le \Lambda,\end{equation}
for some~$\Lambda>0$.

Then, there exist~$c$, $K>0$,
possibly depending on~$n$, $s$, $\sigma$ and~$\Lambda$, such that
for any~$r\in(0,1/2]$,
\begin{equation}\label{CD-2}
\min\Big\{ |B_r\cap E|,\, |B_r\setminus E|\Big\}\ge cr^n
\end{equation}
and
\begin{equation}\label{CD-2-BIS}
\|u\|_{L^\infty(B_{1/2})}\le K.
\end{equation}
In addition, if~$s>\sigma/2$, then, given~$r_0\in(0,1/4)$,
\begin{equation}\label{CD-1}
{\mbox{$u\in C^{s-\frac\sigma2}(B_{r_0})$, with~$\|
u\|_{C^{s-\frac\sigma2}(B_{r_0})}\le C$,}}
\end{equation}
where~$C>0$ possibly depends
on~$n$, $s$, $\sigma$, $r_0$ and~$\Lambda$.
\end{theorem}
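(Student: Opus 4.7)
The plan is to establish the three claims in sequence, each through a comparison against a carefully chosen admissible competitor. The perimeter terms will be handled by standard nonlocal isoperimetric-type estimates, and the Dirichlet contribution will be controlled via the fractional harmonic replacement introduced in this paper.

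For the density estimates in~\eqref{CD-2}, I would use two symmetric comparison arguments. To prove $|B_r\setminus E|\ge c r^n$, compare $(u,E)$ with $(v,F)$ where $F:=E\setminus B_r$ and $v$ is obtained from $u$ by a truncation (or fractional harmonic replacement subject to being nonpositive on $B_r$). The Dirichlet saving is bounded in terms of $|E\cap B_r|$ through a fractional Sobolev inequality, while the perimeter cost $\Per_\sigma(E,B_1)-\Per_\sigma(F,B_1)$ is bounded below in terms of $|E\cap B_r|^{(n-\sigma)/n}$ by a nonlocal isoperimetric-type inequality. Since $0\in\partial E$, minimality forces the density bound from the negativity side; the complementary estimate $|B_r\cap E|\ge cr^n$ is obtained analogously by lifting $u$ and taking $F:=E\cup B_r$.

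The $L^\infty$ bound~\eqref{CD-2-BIS} follows from a De Giorgi iteration. Testing minimality with the truncated competitor $\bigl(u\wedge(M-2^{-k}M),\,E\bigr)$ (whose positivity set is contained in $E$, so the perimeter is unchanged) yields a Caccioppoli-type inequality in which the tail bound~\eqref{GROW} controls the far-field contribution. The fractional Sobolev embedding then produces the standard nonlinear recursion $A_{k+1}\le C^k A_k^{1+\alpha}$ on the $L^2$ excess above level $M-2^{-k}M$, which closes for $M$ sufficiently large depending on $\Lambda$; since $u\ge 0$ on $E$ and $u\le 0$ off $E$, the symmetric argument on $u_-$ completes the bound.

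The main obstacle is the Hölder bound~\eqref{CD-1}, and this is where the fractional harmonic replacement enters in full. For $x_0\in B_{r_0}$ and small $r$, let $h$ denote the fractional harmonic replacement of $u$ in $B_r(x_0)$, and test minimality of $(u,E)$ against $\bigl(h,E'\bigr)$, where $E'$ is obtained by superposing $E$ outside $B_r(x_0)$ with $\{h>0\}$ inside. The perimeter change is controlled by $C r^{n-\sigma}$, while the orthogonality property of the replacement identifies the Dirichlet difference with $[u-h]_{H^s(\R^n)}^2$ up to a positive constant, so that
\begin{equation*}
[u-h]_{H^s(\R^n)}^2\le C r^{n-\sigma}.
\end{equation*}
Since $u-h$ is supported in $\overline{B_r(x_0)}$, a fractional Poincaré inequality gives $\|u-h\|_{L^2(B_r(x_0))}^2\le C r^{n+2s-\sigma}$, i.e.\ typical size $r^{s-\sigma/2}$; combining this with the interior smoothness of the $s$-harmonic function $h$ yields an oscillation decay
\begin{equation*}
\mathrm{osc}_{B_{r/2}(x_0)} u\le \theta\,\mathrm{osc}_{B_r(x_0)} u + C r^{s-\sigma/2}
\end{equation*}
with $\theta<1$, and a standard Campanato iteration then delivers the claimed $C^{s-\sigma/2}$ estimate. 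The hardest point is precisely the balancing of the two scales: the $H^s$ gain ($r^s$) must dominate the perimeter loss ($r^{-\sigma/2}$ in $L^2$ density), which is why the exponent reads $s-\sigma/2$ and why the hypothesis $s>\sigma/2$ is required to keep it positive.
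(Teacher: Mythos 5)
Your overall philosophy (comparison arguments, fractional Sobolev/isoperimetric, fractional harmonic replacement) is the right one, but several of the specific mechanisms are different from the paper's, and a couple of the steps as you describe them would not close.

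\textbf{Density estimates.} You have the two comparison cases paired backwards. Taking $F:=E\cup B_r$ with $v:=u$ unchanged (admissible, since $u=0$ a.e.\ off $E$) is the one that gives $|B_r\setminus E|\ge cr^n$; it is the \emph{easy} side because the Dirichlet energy is untouched and only the perimeter is compared, followed by a De Giorgi-type iteration. The \emph{hard} side is $|B_r\cap E|\ge cr^n$, where $F:=E\setminus B_r$ forces the competitor to vanish on $E\cap B_r$, so its Dirichlet energy goes \emph{up} and must be controlled. The paper bounds this increase not by a fractional Sobolev inequality but by Theorem~\ref{THM 1.3}, which says the energy cost of enlarging the zero set by $A$ is at most $C|A|\,\|\overline u\|_{L^\infty}^2$; this linear-in-$|A|$ bound (proved through Steiner symmetrization in the extended variable and a scaling argument reducing to the radial case) is precisely what makes the nonlinear iteration close, and a Sobolev inequality would produce the wrong exponent. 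This also requires a uniform $L^\infty$ bound on the extended function $\overline u$ (Lemma~\ref{L-ext}), which your sketch omits entirely.

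\textbf{The $L^\infty$ bound.} Your De Giorgi iteration would give a level $M$ depending on $\Lambda$, and you acknowledge this. The paper instead compares with the $s$-harmonic replacement $u_\star$ of \cite{DV}, applies the fractional Harnack inequality to $u_\star\ge 0$, uses the already-proved density $|B_{1/2}\setminus E|\ge\delta$, and ends up with $\sup_{B_{1/2}} u\le C\,L(B_{3/4},\R^n\setminus B_{3/4})\le C$, a constant independent of $\Lambda$. That $\Lambda$-independence is not a cosmetic improvement: it is what makes the rescaled $L^\infty$ bound uniform in $r$ in Corollary~\ref{67dfffsjjsj-BIS}, yielding the growth $|u(x)|\le K\,d(x)^{s-\sigma/2}$ from the free boundary, which the paper's H\"older proof relies on. A $\Lambda$-dependent $L^\infty$ estimate alone would not support that downstream rescaling.

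\textbf{The H\"older estimate.} Your $H^s$ estimate $[u-h]^2\le Cr^{n-\sigma}$ and the $L^2$ bound $\|u-h\|_{L^2(B_r)}^2\le Cr^{n+2s-\sigma}$ are correct consequences of minimality (the competitor $(h,E')$ is admissible since $u\ge 0$ forces $h\ge0$ by the maximum principle, so $E'\supseteq B_r$ and the perimeter change is $O(r^{n-\sigma})$). But the Campanato/oscillation-decay iteration you propose has a genuine gap: the decay of the excess of the $s$-harmonic $h$ at scale $\rho<r$ carries a nonlocal tail depending on the oscillation of $u$ on $B_1\setminus B_r$ (and ultimately on $\Lambda$), and your sketch neither controls this tail nor explains why the iteration absorbs it. Moreover, the iteration only pertains to balls touching $\partial E$; for points well inside $E$ (where $B_{5r}(p)\subset E$) one needs interior regularity of the $s$-harmonic $u$ together with the growth estimate, which your argument does not address. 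The paper avoids the Campanato tail bookkeeping altogether: it splits into the case $B_{5r}(p)\subset E$ (interior $s$-harmonic estimates rescaled with the growth bound) and the case where $B_{5r}(p)$ meets $\R^n\setminus E$ (rescaled $L^\infty$ bound after translating a free-boundary point to the origin). That two-case split, fed by the $\Lambda$-independent $L^\infty$ bound and the growth estimate, is the part of the argument your proposal is missing.
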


We observe that both the growth condition~\eqref{GROW}
and the H\"older exponent in~\eqref{CD-1} are compatible
with the degree of homogeneity of the minimizing cones,
see Theorem~1.3 of~\cite{DSV2}. 
Condition~\eqref{GROW} is also a standard assumption
to make sense of the fractional Laplace operator
(though some very recent developments in~\cite{POLLI}
may also allow more general notions of suitable fractional Laplace operators
for functions with more severe growth at infinity).

It is an open problem
to investigate the optimal regularity of the solution
(which could be possibly beyond the scaling arguments) and
to classify (or trivialize) the minimizing cones:
see also~\cite{CSV, DSV2} for partial results and additional comments
on these problems.

It is also an interesting question
to study this type of
free boundary problems
for more general fractional operators
(see e.g.~\cite{ADD1, ADD2} for a classical
counterpart).
\bigskip

The rest of the paper is organized as follows.
In Section~\ref{EXT:S:1} we introduce an extension
problem which is useful to localize the Dirichlet
energy (using a weighted space with an additional variable).
This extended problem is different than the one considered in~\cite{DSV2}
since here the fractional perimeter functional is
not modified by the extension procedure.

In Section~\ref{FHR}, we introduce a
fractional harmonic replacement in this weighed extended space.
Fractional harmonic replacements are of course a classical
topic in harmonic analysis
and they have several applications to free boundary problems,
see e.g.~\cite{ACKS, CSV} and the references therein.
In the literature, a fractional harmonic replacement
was also studied in~\cite{DV}. The setting of~\cite{DV}
is different than the one considered in Section~\ref{FHR} of
this paper, since here we deal with the extended space
and, in Section~\ref{9uTHJkl789},
we obtain localized energy estimates in the extended variable.
These energy estimates
play a crucial role in our subsequent density estimates
(as a matter of fact, both the replacement of~\cite{DV}
and the one of Section~\ref{FHR} here will be used
in this paper to prove density estimates from both sides).

In Section~\ref{8ikY89521} we prove the
density estimates. First we prove the density
of the vanishing set around free boundary points, together
with a uniform estimate on the size of the solution.
Then we use this information to obtain density estimates
of the positivity set as well, which completes the proof
of the double-sided density estimate in~\eqref{CD-2}.

By combining the density estimates with the uniform bound
on the solution, one also obtains continuity of the minimizers,
as claimed in~\eqref{CD-1}.

\section{An extended problem}\label{EXT:S:1}

In this section we introduce an extension problem in order to localize 
the Dirichlet energy, by adding one variable (see \cite{CaffSilv}).  

We use the following setting.
We consider variables~$x\in\R^n$ and~$z\in\R$,
and we use the notation~$X:=(x,z)\in\R^{n+1}$.
We consider the halfspace~$\R^{n+1}_+ :=\R^n\times(0,+\infty)$.
The $n$-dimensional
ball centered at~$0\in\R^n$ and of radius~$r>0$ is denoted by~$B_r$.

Given~$u:\R^n\to\R$, for any~$(x,z)\in\R^{n+1}$
we define
\begin{equation}\label{EXTENDED}
\overline u(x,z):=
\int_{\R^n} \frac{|z|^{2s}\; u(x-y)}{(|y|^2+z^2)^{\frac{n+2s}{2}}}\,dy=
\int_{\R^n} \frac{|z|^{2s}\; u(y)}{(|x-y|^2+z^2)^{\frac{n+2s}{2}}}\,dy,
\end{equation}
see e.g. \cite{CaffSilv}, in particular Section $2.4$ there
(notice that in \cite{CaffSilv} in the definition of the extension function $\overline u$ 
there is also a normalizing constant, that we neglect here, since it will not play any role in our 
problem). 

Next result states that if~$(u,E)$ is a minimal pair,
then~$(\overline u,E)$ is minimal for an extended problem:

\begin{lemma}\label{EXT-L}
Let~$(u,E)$ be a minimizing pair in~$B_r$.
Let~${\mathcal{U}}$ be
a bounded and Lipschitz domain of~$\R^{n+1}$
that is symmetric with respect to the $z$-coordinate, such that
\begin{equation*}
{\mathcal{U}}\cap \{z=0\}\subset B_r\times\{0\}.\end{equation*}
Then
$$ \int_{\mathcal{U}} |z|^a |\nabla \overline u|^2\,dX
+\Per_\sigma(E,B_r)\le
\int_{\mathcal{U}} |z|^a |\nabla \tilde v|^2\,dX
+\Per_\sigma(F,B_r),$$
for every~$(\tilde v,F)$ such that:
\begin{itemize}
\item $F\setminus B_r=E\setminus B_r$,
\item $\tilde v-\overline u$ is compactly supported inside~${\mathcal{U}}$,
\item $\tilde v(x,0)\ge0$ a.e.~$x\in F$,
\item $\tilde v(x,0)\le0$ a.e.~$x\in\R^n\setminus F$.
\end{itemize}
\end{lemma}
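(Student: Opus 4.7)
The plan is to reduce the extended minimization to the minimality of $(u,E)$ in $B_r$ by taking the trace of $\tilde v$ on $\{z=0\}$, and then to exploit the well-known characterization of $\overline u$ as the minimizer of the weighted Dirichlet energy among all extensions of $u$ (cf.\ \cite{CaffSilv}).

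First, given a competitor $(\tilde v,F)$ as in the statement, I would define $v(x):=\tilde v(x,0)$ and verify that $(v,F)$ is an admissible competitor for $(u,E)$ in $B_r$. The sign conditions $v\ge0$ a.e.\ on $F$ and $v\le0$ a.e.\ on $\R^n\setminus F$ are direct consequences of the hypotheses on $\tilde v$. Because $\tilde v-\overline u$ is compactly supported in $\mathcal U$ and $\mathcal U\cap\{z=0\}\subset B_r\times\{0\}$, the trace $v-u$ vanishes outside $B_r$; its $H^s$--regularity follows from standard trace theory for the Muckenhoupt--weighted space $H^1(\R^{n+1},|z|^a\,dX)$. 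Together with the given $F\setminus B_r=E\setminus B_r$, this makes $(v,F)$ admissible.

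Next, I would invoke the minimality of $(u,E)$ to obtain
\begin{equation*}
\iint_{Q_{B_r}}\frac{|u(x)-u(y)|^2}{|x-y|^{n+2s}}\,dx\,dy+\Per_\sigma(E,B_r)\le \iint_{Q_{B_r}}\frac{|v(x)-v(y)|^2}{|x-y|^{n+2s}}\,dx\,dy+\Per_\sigma(F,B_r).
\end{equation*}
Since $u\equiv v$ on $\R^n\setminus B_r$, the integrals over $(\R^n\setminus B_r)\times(\R^n\setminus B_r)$ are identical on both sides, so the inequality persists when $Q_{B_r}$ is replaced by $\R^n\times\R^n$. The Caffarelli--Silvestre identity then rewrites each full Gagliardo seminorm as a weighted Dirichlet energy $\int_{\R^{n+1}}|z|^a|\nabla\overline w|^2\,dX$ of the corresponding extension, with the normalization constant absorbed into the convention adopted here.

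Finally, I would use the global minimality of $\overline v$ as the weighted Dirichlet energy minimizer among all extensions of $v$: extending $\tilde v$ by $\overline u$ outside $\mathcal U$ produces a function on $\R^{n+1}$ with trace $v$ on $\{z=0\}$ (this uses that $v=u$ outside $B_r$), so
\begin{equation*}
\int_{\R^{n+1}}|z|^a|\nabla\overline v|^2\,dX\le \int_{\mathcal U}|z|^a|\nabla\tilde v|^2\,dX+\int_{\R^{n+1}\setminus\mathcal U}|z|^a|\nabla\overline u|^2\,dX.
\end{equation*}
Chaining this with the Gagliardo inequality translated into the extension and cancelling the common term $\int_{\R^{n+1}\setminus\mathcal U}|z|^a|\nabla\overline u|^2\,dX$ from both sides yields the desired estimate. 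I expect the main obstacle to be the careful bookkeeping of trace--and--extension: ensuring that $v\in H^s(\R^n)$ with $v-u$ supported in $B_r$ (so that $(v,F)$ is a bona fide admissible competitor), and that the glued function $\tilde v\sqcup\overline u$ lies in $H^1(\R^{n+1},|z|^a)$ (so that the minimality comparison with $\overline v$ is available). These are standard features of the Caffarelli--Silvestre framework but require the invocation of weighted trace theorems for $A_2$--Muckenhoupt weights.
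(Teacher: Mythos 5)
The opening of your argument matches the paper's: define $v(x):=\tilde v(x,0)$, check that $(v,F)$ is an admissible competitor for $(u,E)$ in $B_r$, and invoke the minimality of $(u,E)$ to bound $\Per_\sigma(E,B_r)-\Per_\sigma(F,B_r)$ by the difference of Gagliardo seminorms over $Q_{B_r}$. Up to there you are on track.

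The gap appears in the next step. You add the integral over $(\R^n\setminus B_r)\times(\R^n\setminus B_r)$ to both sides so as to pass from $Q_{B_r}$ to $\R^n\times\R^n$, then rewrite each full Gagliardo seminorm as a global weighted Dirichlet energy $\int_{\R^{n+1}}|z|^a|\nabla\overline w|^2\,dX$ via the Caffarelli--Silvestre identity, and finally \emph{cancel} the common term $\int_{\R^{n+1}\setminus\mathcal U}|z|^a|\nabla\overline u|^2\,dX$ from both sides. None of these quantities is known to be finite. The minimality hypothesis only gives $\mathcal F_{B_r}(u,E)<+\infty$, i.e.\ finiteness of the $Q_{B_r}$-localized seminorm plus the fractional perimeter; the integral of $|u(x)-u(y)|^2|x-y|^{-n-2s}$ over $(\R^n\setminus B_r)^2$ may well be $+\infty$, and correspondingly the global extension energies $\int_{\R^{n+1}}|z|^a|\nabla\overline u|^2\,dX$ and $\int_{\R^{n+1}\setminus\mathcal U}|z|^a|\nabla\overline u|^2\,dX$ may be $+\infty$. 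Adding a common (possibly infinite) term to both sides turns the inequality into $\infty\le\infty$, which loses all content, and the subsequent cancellation is of the form $\infty-\infty$. So as written the chain does not yield the stated estimate.

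The paper avoids this precisely by invoking Lemma~7.2 of~\cite{CRS}, which is a \emph{localized difference} identity: it identifies
$\iint_{Q_{B_r}}\frac{|v(x)-v(y)|^2}{|x-y|^{n+2s}}\,dx\,dy-\iint_{Q_{B_r}}\frac{|u(x)-u(y)|^2}{|x-y|^{n+2s}}\,dx\,dy$
with an infimum of $\int_{\mathcal W}|z|^a(|\nabla\tilde w|^2-|\nabla\overline u|^2)\,dX$ over admissible $(\tilde w,\mathcal W)$, without ever introducing the individual (potentially infinite) global energies. Taking $(\tilde v,\mathcal U)$ as a competitor in that infimum then closes the argument. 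To repair your proof you would either need to cite that localized variational characterization directly, or run a truncation/exhaustion argument showing all the added infinite terms agree and cancel in a limit --- which is essentially re-deriving the CRS lemma.
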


\begin{proof} We take~$(u,E)$ and~$(\tilde v,F)$
as in the statement of Lemma~\ref{EXT-L}
and we define~$v(x):=\tilde v(x,0)$, for any~$x\in\R^n$.

Notice that~$(\R^n\setminus B_r)\times\{0\}\subseteq
\R^{n+1}\setminus{\mathcal{U}}$, therefore~$v(x)=\tilde v(x,0)=
\overline u(x,0)=u(x)$ for a.e.~$x\in\R^n\setminus B_r$.
In addition, $v\ge0$ a.e. on~$F$ and~$v\le0$ a.e. on~$\R^n\setminus F$.
Therefore, the pair~$(v,F)$ is an admissible competitor for~$(u,E)$
and so, by the minimality of~$(u,E)$, we have that
\begin{equation}\label{9gf567gfd789}
\begin{split}& \iint_{Q_{B_r}}\frac{|v(x)-v(y)|^2}{|x-y|^{n+2s}}\,dx\,dy
- \iint_{Q_{B_r}}\frac{|u(x)-u(y)|^2}{|x-y|^{n+2s}}\,dx\,dy \\
&\qquad= {\mathcal{F}}_{B_r}(v,F)
-{\mathcal{F}}_{B_r}(u,E)
-\Per_\sigma(F,B_r)+\Per_\sigma(E,B_r)
\\ &\qquad\ge
-\Per_\sigma(F,B_r)+\Per_\sigma(E,B_r).\end{split}\end{equation}
On the other hand, by Lemma~7.2 of~\cite{CRS}, up to a normalizing constant, we have that
\begin{eqnarray*}
&& \iint_{Q_{B_r}}\frac{|v(x)-v(y)|^2}{|x-y|^{n+2s}}\,dx\,dy
- \iint_{Q_{B_r}}\frac{|u(x)-u(y)|^2}{|x-y|^{n+2s}}\,dx\,dy
\\&&\qquad=\inf \int_{
{\mathcal{W}} } |z|^a 
\big( |\nabla \tilde w|^2 -|\nabla \overline u|^2\big)\,dX,
\end{eqnarray*}
where the infimum above is taken over all the couples~$(\tilde w,{\mathcal{W}})$
satisfying the following properties:
\begin{itemize}
\item ${\mathcal{W}}$ is
a bounded and Lipschitz domain of~$\R^{n+1}$
that is symmetric with respect to the $z$-coordinate, such that
$$ {\mathcal{W}}\cap \{z=0\}\subset B_r\times\{0\},$$
\item $\tilde w-\overline u$ is compactly supported inside~${\mathcal{W}}$,
\item $\tilde w(x,0)=v(x)$ for any~$x\in\R^n$.
\end{itemize}
By construction, we can take~$\tilde w:=\tilde v$ and~$
{\mathcal{W}}:={\mathcal{U}}$ as candidates in the above infimum,
and consequently
\begin{eqnarray*}
&& \iint_{Q_{B_r}}\frac{|v(x)-v(y)|^2}{|x-y|^{n+2s}}\,dx\,dy
- \iint_{Q_{B_r}}\frac{|u(x)-u(y)|^2}{|x-y|^{n+2s}}\,dx\,dy
\\&&\qquad\le\int_{
{\mathcal{U}} } |z|^a
\big( |\nabla \tilde v|^2 -|\nabla \overline u|^2\big)\,dX.\end{eqnarray*}
This and~\eqref{9gf567gfd789} give that
$$ \int_{
{\mathcal{U}} } |z|^a
\big( |\nabla \tilde v|^2 -|\nabla \overline u|^2\big)\,dX\ge
-\Per_\sigma(F,B_r)+\Per_\sigma(E,B_r),$$
that is the desired result.
\end{proof}

\section{Fractional harmonic replacements in the extended variables}\label{FHR}

Goal of this section is to introduce a notion of fractional harmonic
replacement in the extended variables and study its basic properties.
In the classical case, a detailed study of the harmonic
replacement was performed in~\cite{ACKS, CSV}.
See also~\cite{DV} for the study of a related (but different)
fractional harmonic replacement.

We set
\begin{equation}\label{90GHJ}
\B_r:= B_{\frac{9r}{10}} \times (-r,r).\end{equation}
It worth to link the norm in~$\B_r$ for the extended function
with the one on the trace, as pointed out by the following result:

\begin{lemma}\label{Cr Lemma}
Let~$u$ and~$\overline u$ be as in~\eqref{EXTENDED}.
There exists~$C_r>0$ such that
$$ \| \overline u\|_{L^\infty(\B_r)}\le
C_r\,\left( \| u\|_{L^\infty(B_r)} +
\int_{\R^n\setminus B_r} \frac{|u(y)|}{|y|^{n+2s}}\,dy
\right).$$
\end{lemma}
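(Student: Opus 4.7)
\medskip

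\noindent\textbf{Proof plan.} The idea is the classical splitting of the Poisson-type integral representing~$\overline u$ into a near contribution (where~$u$ is controlled by its sup norm on~$B_r$) and a far contribution (where we gain decay in~$|y|$ and match the tail integral). Fix~$(x,z)\in\B_r$, so that~$|x|\le 9r/10$ and~$|z|\le r$. Write
$$ \overline u(x,z)=\int_{B_r}\frac{|z|^{2s}\,u(y)}{(|x-y|^2+z^2)^{\frac{n+2s}{2}}}\,dy
+\int_{\R^n\setminus B_r}\frac{|z|^{2s}\,u(y)}{(|x-y|^2+z^2)^{\frac{n+2s}{2}}}\,dy
=: I_1(x,z)+I_2(x,z). $$

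For~$I_1$ we estimate~$|u(y)|\le \|u\|_{L^\infty(B_r)}$ and use the fact that the extended Poisson kernel has finite total mass on~$\R^n$: by the substitution~$y\mapsto x+|z|\,\eta$ we get
$$ \int_{\R^n}\frac{|z|^{2s}}{(|x-y|^2+z^2)^{\frac{n+2s}{2}}}\,dy
=\int_{\R^n}\frac{d\eta}{(|\eta|^2+1)^{\frac{n+2s}{2}}}=: c_{n,s}<+\infty,$$
so that~$|I_1(x,z)|\le c_{n,s}\,\|u\|_{L^\infty(B_r)}$.

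For~$I_2$ the essential observation is that, for~$y\in\R^n\setminus B_r$, we have~$|y|\geq r>|x|\cdot 10/9$, hence
$$ |x-y|\ge |y|-|x|\ge |y|-\tfrac{9r}{10}\ge |y|-\tfrac{9|y|}{10}=\tfrac{|y|}{10}.$$
Combining this with~$|z|\le r$ gives
$$ \frac{|z|^{2s}}{(|x-y|^2+z^2)^{\frac{n+2s}{2}}}
\le \frac{r^{2s}}{(|y|/10)^{n+2s}}=\frac{10^{n+2s}\,r^{2s}}{|y|^{n+2s}},$$
and therefore
$$ |I_2(x,z)|\le 10^{n+2s}\,r^{2s}\int_{\R^n\setminus B_r}\frac{|u(y)|}{|y|^{n+2s}}\,dy. $$
Setting~$C_r:=\max\{c_{n,s},\,10^{n+2s}\,r^{2s}\}$ and summing the two bounds yields the conclusion.

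The proof is essentially a routine estimate with no conceptual obstacle; the only mildly delicate point is the geometric inequality~$|x-y|\geq |y|/10$, which relies on the choice of radius~$9r/10$ in the definition~\eqref{90GHJ} of~$\B_r$ (any radius strictly less than~$r$ would give an analogous bound with a different constant).
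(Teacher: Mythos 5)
Your proof is correct and follows essentially the same route as the paper: you split $\overline u$ into the near part over $B_r$, bounded by $\|u\|_{L^\infty(B_r)}$ times the finite mass of the Poisson-type kernel, and the far part over $\R^n\setminus B_r$, controlled via the geometric inequality $|x-y|\ge |y|/10$ (which is exactly the paper's estimate and exploits the $\tfrac{9r}{10}$ radius in the definition of $\B_r$). No discrepancy worth noting.
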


\begin{proof} Let~$(x,z)\in\B_r$.
Then~$x\in B_{\frac{9r}{10}}$ and~$|z|\le r$.
Therefore, if~$y\in \R^n\setminus B_r$, we have that
$$ |x-y|\ge |y|-|x| = \frac{|y|}{10} +\frac{9|y|}{10} -|x|
\ge \frac{|y|}{10} +\frac{9r}{10}-\frac{9r}{10} =\frac{|y|}{10}.$$
Hence, if~$y\in \R^n\setminus B_r$,
$$ \frac{|z|^{2s}\; |u(y)|}{(|x-y|^2+z^2)^{\frac{n+2s}{2}}}
\le \frac{r^{2s}\; |u(y)|}{|x-y|^{{n+2s}}} 
\le C_r \,\frac{|u(y)|}{|y|^{{n+2s}}},$$
for some~$C_r>0$. As a consequence
\begin{equation}\label{89yjkiii098}
\int_{\R^n\setminus B_r}
\frac{|z|^{2s}\; |u(y)|}{(|x-y|^2+z^2)^{\frac{n+2s}{2}}}\,dy
\le C_r \int_{\R^n\setminus B_r} \frac{|u(y)|}{|y|^{{n+2s}}}\,dy.\end{equation}
Moreover,
\begin{eqnarray*}&& \int_{B_r}
\frac{|z|^{2s}\; |u(y)|}{(|x-y|^2+z^2)^{\frac{n+2s}{2}}}\,dy
\le \|u\|_{L^\infty(B_r)} \int_{B_r}
\frac{|z|^{2s}\;}{(|x-y|^2+z^2)^{\frac{n+2s}{2}}}\,dy\\&&\qquad
\le \|u\|_{L^\infty(B_r)} \int_{\R^n}
\frac{|z|^{2s}\;}{(|x-y|^2+z^2)^{\frac{n+2s}{2}}}\,dy
= C\,\|u\|_{L^\infty(B_r)} ,\end{eqnarray*}
for some~$C>0$.
The latter estimate and~\eqref{89yjkiii098}
imply the desired result, up to renaming the constants.
\end{proof}

\subsection{Functional spaces}\label{F:S}

Given~$r>0$, 
we consider the seminorm
$$ [v]_{\H^s(\B_r)} :=\sqrt{ \int_{\B_r} |z|^a |\nabla v|^2\,dX },$$
with~$a:=1-2s\in(-1,1)$.
We denote by~$\H^s(\B_r)$
the closure of~$C^{\infty}(\B_r)$ with respect to the 
norm
$$ \| v\|_{\H^s(\B_r)} := [ v ]_{\H^s(\B_r)} +
\sqrt{\int_{\B_r} |z|^a |v|^2\,dX}.$$
We also set~$ \H^s_0(\B_r)$
to be the closure of~$C^{\infty}_0(\B_r)$
with respect to the norm above.

For completeness, we point out that the seminorm~$[ \cdot]_{\H^s(\B_r)}$
is indeed a norm on~$\H^s_0(\B_r)$:

\begin{lemma}\label{CHIA67877:L}
If~$v\in \H^s_0(\B_r)$ and~$[v]_{\H^s(\B_r)}=0$ then~$v=0$ a.e. in~$\B_r$.
\end{lemma}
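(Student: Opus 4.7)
The plan is to deduce the statement from a weighted Poincar\'e inequality
$$\int_{\B_r} |z|^a |v|^2 \, dX \le C_r \int_{\B_r} |z|^a |\nabla v|^2 \, dX \qquad \text{for every } v \in \H^s_0(\B_r),$$
from which $[v]_{\H^s(\B_r)} = 0$ immediately yields $v = 0$ a.e. in $\B_r$.

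I would first establish this inequality for $v \in C^\infty_0(\B_r)$, working separately on $\B_r \cap \{z > 0\}$ and $\B_r \cap \{z < 0\}$. On the upper half, compact support forces $v(x,r) = 0$, so the fundamental theorem of calculus gives $v(x,z) = -\int_z^r \partial_\tau v(x,\tau) \, d\tau$. Splitting $|\partial_\tau v| = \tau^{-a/2} \cdot \tau^{a/2} |\partial_\tau v|$ and applying Cauchy--Schwarz yields
$$|v(x,z)|^2 \le \Big( \int_0^r \tau^{-a} \, d\tau \Big) \int_0^r \tau^a |\partial_\tau v(x,\tau)|^2 \, d\tau.$$
Multiplying by $z^a$ and integrating in $z \in (0,r)$ and $x \in B_{9r/10}$ gives the weighted Poincar\'e bound on the upper half (using $|\partial_\tau v|^2 \le |\nabla v|^2$); the lower half is symmetric. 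Both $\int_0^r \tau^{-a}\, d\tau$ and $\int_0^r z^a\, dz$ are finite exactly because $a = 1 - 2s \in (-1,1)$.

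The extension from $C^\infty_0(\B_r)$ to $\H^s_0(\B_r)$ is then immediate from the definition: every $v \in \H^s_0(\B_r)$ is the $\|\cdot\|_{\H^s(\B_r)}$-limit of test functions, so both sides of the inequality pass to the limit. The main (and essentially only) subtle point is the integrability of $\tau^{-a}$ near the origin, which requires $a < 1$, i.e.\ $s > 0$; this is exactly our standing assumption, so no genuine obstacle arises.
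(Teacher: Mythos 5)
Your argument is correct, and it takes a genuinely different route from the paper. The paper invokes a weighted Sobolev inequality (Proposition~2.1.1 in~\cite{maria}),
$$\left(\int_{\B_r}|z|^a|v_k|^{2\gamma}\,dX\right)^{1/(2\gamma)}\le \hat S\,[v_k]_{\H^s(\B_r)}\qquad(\gamma>1),$$
applied to the approximating sequence $v_k\in C^\infty_0(\B_r)$; since $[v_k]_{\H^s(\B_r)}=[v_k-v]_{\H^s(\B_r)}\to 0$, the left side tends to zero, so (along a subsequence) $v_k\to 0$ a.e., and combining with $v_k\to v$ a.e.\ gives $v=0$ a.e. You instead prove a weighted Poincar\'e inequality from scratch via the fundamental theorem of calculus in $z$ and Cauchy--Schwarz, exploiting that a test function vanishes near $\{z=\pm r\}$ and that both $\tau^{-a}$ and $\tau^a$ are integrable on $(0,r)$ precisely because $a=1-2s\in(-1,1)$. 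Your route is more elementary and entirely self-contained, avoiding the external citation; the paper's route is shorter modulo the quoted result, and the Sobolev (rather than merely Poincar\'e) inequality is genuinely useful elsewhere in the paper (cf.\ Lemma~\ref{MARS} and Lemma~\ref{L-ext}), which is presumably why the authors reach for it here. One small point worth making explicit in your write-up: after passing to the limit and concluding $\int_{\B_r}|z|^a|v|^2\,dX=0$, note that $|z|^a>0$ a.e.\ in $\B_r$ (the set $\{z=0\}$ has zero $(n+1)$-dimensional measure), so indeed $v=0$ a.e.\ in $\B_r$.
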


\begin{proof} Let~$v_k\in C^{\infty}_0(\B_r)$ be such that~$\|v_k-v\|_{\H^s(\B_r)}\to0$
as~$k\to+\infty$. Up to subsequences, we may suppose that
\begin{equation}\label{FCW456sG}
{\mbox{$v_k\to v$ a.e. in $\B_r$.}}
\end{equation}
Also, by Proposition 2.1.1 in~\cite{maria},
$$ \left( \int_{\B_r} |z|^a |v_k|^{2\gamma}\,dX\right)^{\frac{1}{2\gamma}}\le
\hat S\, [v_k]_{\H^s(\B_r)},$$
for some~$\gamma>1$ and~$\hat S>0$. Therefore
\begin{eqnarray*}
&& \left( \int_{\B_r} |z|^a |v_k|^{2\gamma}\,dX\right)^{\frac{1}{2\gamma}}
\le \hat S\, \Big( [v_k-v]_{\H^s(\B_r)}+[v]_{\H^s(\B_r)}\Big)\\
&&\qquad
=\hat S\, [v_k-v]_{\H^s(\B_r)}\to 0\end{eqnarray*}
as $k\to+\infty$. This implies that~$v_k\to0$ a.e. in~$\B_r$,
up to subsequences, and therefore~$v=0$ a.e. in~$\B_r$,
thanks to~\eqref{FCW456sG}.
\end{proof}

Given~$\varphi\in \H^s(\B_2)$, we define
$$ \D^\varphi := \big\{
v\in \H^s(\B_1) {\mbox{ s.t. }} v-\varphi\in \H^s_0(\B_1)
\big\}.$$
Now we observe that functions in~$\D^\varphi$ possess a trace
along~$\{z=0\}$. The expert reader may skip this part and go directly to formula \eqref{DE-ha}. 
To give an elementary proof
of this fact (which is rather well known in general,
see e.g. Lemma~3.1
of~\cite{trace} or the references therein), we make
this preliminary observation:

\begin{lemma}\label{TRA}
For any~$v\in C_0^{\infty}(\B_r)$
and any~$x\in B_{\frac{9r}{10}}$,
we define~$T_v (x):=v(x,0)$.
Then, there exists~$C>0$ such that
$$ \| T_v\|_{L^2(B_{{9r}/{10}})}\le C\| v\|_{\H^s(\B_r)}.$$
\end{lemma}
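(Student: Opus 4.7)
The plan is to realize $T_v$ as the boundary trace along~$\{z=0\}$ via the fundamental theorem of calculus and then control the resulting integral by the weighted Dirichlet energy. Since~$v\in C_0^\infty(\B_r)$ vanishes identically near the top face~$B_{9r/10}\times\{r\}$, for any~$x\in B_{9r/10}$ I would write
$$ v(x,0)=-\int_0^r \partial_z v(x,t)\,dt.$$
Splitting the integrand as~$\partial_z v(x,t)=t^{-a/2}\cdot\bigl(t^{a/2}\partial_z v(x,t)\bigr)$ and applying the Cauchy--Schwarz inequality then yields
$$ |v(x,0)|^2\le\left(\int_0^r t^{-a}\,dt\right)\left(\int_0^r t^{a}\,|\partial_z v(x,t)|^2\,dt\right).$$

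The next step is to observe that~$\int_0^r t^{-a}\,dt = r^{1-a}/(1-a) = r^{2s}/(2s)$, which is finite precisely because~$a=1-2s<1$, i.e. because~$s\in(0,1)$. Integrating over~$x\in B_{9r/10}$ and recalling that the resulting inner integral is bounded by the corresponding integral over the full slab~$\B_r$, I would obtain
$$ \|T_v\|_{L^2(B_{9r/10})}^2\le\frac{r^{2s}}{2s}\int_{\B_r}|z|^{a}\,|\partial_z v|^2\,dX\le\frac{r^{2s}}{2s}\,[v]_{\H^s(\B_r)}^{2}\le\frac{r^{2s}}{2s}\,\|v\|_{\H^s(\B_r)}^{2},$$
which gives the claim with~$C=\sqrt{r^{2s}/(2s)}$.

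The only real issue is integrability of the weight~$t^{-a}$ at~$t=0$, and this is precisely where the restriction~$s>0$ enters; for~$s\in(0,1)$ we have~$-a\in(-1,1)$, so the weight is integrable on~$(0,r)$ and the constant stays finite. Everything else is a direct computation; the compact support of~$v$ inside~$\B_r$ is what legitimates the elementary fundamental-theorem-of-calculus identity on the interval~$[0,r]$, and extending the integration over the symmetric bottom half~$(-r,0)$ would give an equivalent estimate with the same constant.
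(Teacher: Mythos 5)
Your proof is correct and is essentially the paper's own argument: both write the trace at $z=0$ via the fundamental theorem of calculus in the $z$-variable (exploiting that $v$ vanishes at $z=r$), split the integrand as $|z|^{-a/2}\cdot |z|^{a/2}\partial_z v$, apply Cauchy--Schwarz, use that $\int_0^r |z|^{-a}\,dz$ is finite since $-a=2s-1>-1$, and then integrate over $x\in B_{9r/10}$. Your version simply makes the constant $r^{2s}/(2s)$ explicit and retains $|\partial_z v|$ rather than immediately bounding it by $|\nabla v|$, which is an inconsequential refinement.
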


\begin{proof} For any~$x\in B_{\frac{9r}{10}}$,
\begin{eqnarray*}
&& |T_v(x)| = |v(x,0)-v(x,r)|
\\&&\qquad\le \int_0^{r} |\partial_z v(x,z)|\,dz
\le \int_0^{r} |z|^{-\frac{a}{2}} |z|^{\frac{a}{2}} |\nabla v(x,z)|\,dz.\end{eqnarray*}
So, by Cauchy-Schwarz inequality,
for any~$x\in B_{\frac{9r}{10}}$,
$$ |T_v(x)|^2 \le 
\int_0^{r} |z|^{-a} \,dz\,
\int_0^{r} |z|^{a} |\nabla v(x,z)|^2\,dz = C
\int_0^{r} |z|^{a} |\nabla v(x,z)|^2\,dz.$$
Hence we integrate over~$x\in B_{\frac{9r}{10}}$
and the desired result easily follows.
\end{proof}

Now, for any~$w\in \H^s_0(\B_r)$,
we know from the definition of~$\H^s_0(\B_r)$
that there exists a sequence of functions~$w_k\in C_0^{\infty}(\B_r)$
such that~$\| w-w_k\|_{\H^s(\B_r)}\to0$ as~$k\to+\infty$.
By Lemma~\ref{TRA}, we have that
$$ T_{w_k-w_h}(x)=w_k(x,0)-w_h(x,0)
=T_{w_k}(x)-T_{w_h}(x)$$
and so
$$ \| T_{w_k}-T_{w_h}\|_{L^2(B_{{9r}/{10}})}\le C\| w_k-w_h\|_{\H^s(\B_r)}.$$
This means that the sequence $T_{w_k}$ is Cauchy in~$L^2(B_{{9r}/{10}})$,
hence it converges to some
function, denoted as~$T_w$, in~$L^2(B_{{9r}/{10}})$,
which we call the trace of~$w$ along~$\{z=0\}$.
Of course, the trace~$T_w$ is defined up to sets of zero $n$-dimensional Lebesgue
measure, and a different approximating sequence
does produce the same trace:
to check this, take an approximating sequence~$\tilde w_k$
and use again Lemma~\ref{TRA}
to see that
\begin{eqnarray*}&&\| T_{w_k}-T_{\tilde w_k}\|_{L^2(B_{{9r}/{10}})}
\le C\| w_k-\tilde w_k\|_{\H^s(\B_r)}\\ &&\qquad\le
C\| w_k-w\|_{\H^s(\B_r)}+C\| \tilde w_k-w\|_{\H^s(\B_r)},\end{eqnarray*}
hence~$T_{w_k}$ and~$T_{\tilde w_k}$ have the same limit in~$
L^2(B_{{9r}/{10}})$.

Our next goal is to show that we can trace also~$\varphi\in \H^s(\B_2)$
along~$B_{9/10}$. This is not completely obvious since~$\varphi
\not\in\H^s_0(\B_2)$, so the above construction does not apply.
For this, we observe that:

\begin{lemma}\label{PSI12-1}
If~$\varphi_1$, $\varphi_2\in \H^s(\B_2)$ and~$\varphi_1=\varphi_2$
a.e. in~$\B_{5/4}$, then~$\D^{\varphi_1}=\D^{\varphi_2}$.
\end{lemma}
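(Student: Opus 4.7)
The key observation is that $\B_1 \subset \B_{5/4}$: indeed, $\B_1 = B_{9/10} \times (-1,1)$ while $\B_{5/4} = B_{9/8} \times (-5/4,5/4)$, and $9/10 < 9/8$, $1 < 5/4$. In particular, the hypothesis forces $\varphi_1 = \varphi_2$ a.e.\ in $\B_1$, which is all that is really needed for the conclusion.

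First I would verify that restriction gives a well-defined, bounded linear map from $\H^s(\B_2)$ to $\H^s(\B_1)$. Given $\varphi \in \H^s(\B_2)$, pick $\varphi_k \in C^\infty(\B_2)$ with $\| \varphi_k - \varphi\|_{\H^s(\B_2)} \to 0$. Since $\B_1 \subset \B_2$ and the integrands defining the $\H^s$-norm are nonnegative, the restrictions $\varphi_k|_{\B_1} \in C^\infty(\B_1)$ satisfy
$$\| \varphi_k|_{\B_1} - \varphi_h|_{\B_1}\|_{\H^s(\B_1)} \le \|\varphi_k - \varphi_h\|_{\H^s(\B_2)},$$
so they form a Cauchy sequence in $\H^s(\B_1)$. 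This identifies $\varphi|_{\B_1}$ with a well-defined element of $\H^s(\B_1)$ that agrees a.e.\ on $\B_1$ with the original representative.

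The second step is the remark that the $\H^s(\B_1)$ norm dominates the weighted $L^2$ norm $(\int_{\B_1} |z|^a |\cdot|^2 \,dX)^{1/2}$, so any two elements of $\H^s(\B_1)$ whose a.e.-representatives coincide a.e.\ on $\B_1$ are equal in $\H^s(\B_1)$. Applied to $\varphi_1 - \varphi_2$, which vanishes a.e.\ on $\B_1$ by hypothesis, this yields $\varphi_1|_{\B_1} = \varphi_2|_{\B_1}$ as elements of $\H^s(\B_1)$. Consequently, for every $v \in \H^s(\B_1)$ the elements $v - \varphi_1$ and $v - \varphi_2$ coincide in $\H^s(\B_1)$, and in particular $v - \varphi_1 \in \H^s_0(\B_1)$ if and only if $v - \varphi_2 \in \H^s_0(\B_1)$; that is, $\D^{\varphi_1} = \D^{\varphi_2}$.

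I foresee no real obstacle: the argument is essentially bookkeeping about restrictions and equivalence classes. The only point worth underlining is that $\H^s(\B_r)$, being the completion of $C^\infty(\B_r)$ in a norm that contains a weighted $L^2$ piece, does identify functions that agree a.e.\ on $\B_r$; the extra room between $\B_1$ and $\B_{5/4}$ in the hypothesis is stronger than strictly needed here and is presumably convenient for the applications that follow.
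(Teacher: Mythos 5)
Your proof is correct and, at its core, matches the paper's argument: both rest on the observation that $\B_1\subset\B_{5/4}$ forces $\varphi_1=\varphi_2$ (hence $v-\varphi_1=v-\varphi_2$) as elements of $\H^s(\B_1)$, so membership of $v-\varphi_i$ in $\H^s_0(\B_1)$ does not depend on $i\in\{1,2\}$. The paper phrases this more tersely by noting that any approximating sequence $w_k\in C^\infty_0(\B_1)$ for $v-\varphi_1$ satisfies $\|v-\varphi_1-w_k\|_{\H^s(\B_1)}=\|v-\varphi_2-w_k\|_{\H^s(\B_1)}$ and so also works for $v-\varphi_2$, but this is the same mechanism you use (you just make the ``restriction is well-defined and a.e.\ equality implies equality in $\H^s(\B_1)$'' step explicit, whereas the paper keeps it implicit).
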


\begin{proof} Let~$v\in \D^{\varphi_1}$. Then~$v-\varphi_1\in\H^s_0(\B_1)$.
Hence there exists a sequence~$w_k\in C^{\infty}_0(\B_1)$ such
that~$\|v-\varphi_1-w_k\|_{\H^s(\B_1)}\to0$
as~$k\to+\infty$. Since~$\varphi_1=\varphi_2$
a.e. in~$\B_{5/4}$, we have that~$\|v-\varphi_1-w_k\|_{\H^s(\B_1)}
=\|v-\varphi_2-w_k\|_{\H^s(\B_1)}$.
As a consequence,~$\|v-\varphi_2-w_k\|_{\H^s(\B_1)}
\to0$
as~$k\to+\infty$, which shows that~$v\in \D^{\varphi_2}$.

The reverse inclusion is completely analogous.
\end{proof}

Now, given~$\varphi\in \H^s(\B_2)$, we can take~$\tau\in C^\infty_0(\B_{3/2})$
with~$\tau=1$ in~$\B_{5/4}$ and consider~$\varphi_o := \tau \varphi$.
By the trace construction in~$\H^s_0(\B_2)$,
we can define the trace~$T_{\varphi_o}$ as a function in~$L^2( B_{2\cdot 9/10})$.
So we define the trace of~$\varphi$ in~$B_{9/10}$ as~$T_\varphi:=T_{\varphi_o}$.
By construction, $T_\varphi\in L^2( B_{9/10})$.
Next observation shows that this definition is independent on the
particular cut-off chosen:

\begin{lemma}
If~$\varphi_1$, $\varphi_2\in \H^s_0(\B_2)$,
with~$\varphi_1=\varphi_2$ a.e. in~$\B_{5/4}$,
then~$T_{\varphi_1}=T_{\varphi_2}$
a.e. in~$B_{9/10}$.
\end{lemma}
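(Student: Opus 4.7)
The plan is to reduce to the statement: if $w \in \H^s_0(\B_2)$ vanishes a.e. on $\B_{5/4}$, then $T_w = 0$ a.e. on $B_{9/10}$. Setting $w := \varphi_1 - \varphi_2$ and observing that any approximating sequence in $C^\infty_0(\B_2)$ for $\varphi_1 - \varphi_2$ can be realized as the difference of approximating sequences for $\varphi_1$ and $\varphi_2$, the linearity of the trace construction carried out just before the lemma gives $T_w = T_{\varphi_1} - T_{\varphi_2}$, so the reduction is immediate.

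To prove the vanishing claim, the idea is to modify any smooth approximating sequence so that each modified function is identically zero in a fixed neighborhood of $B_{9/10}\times\{0\}$, while still converging to $w$ in $\H^s(\B_2)$. Fix a smooth cutoff $\eta\in C^\infty(\R^{n+1})$ with $0\le\eta\le 1$, $\eta\equiv 0$ on $\B_{11/10}$, $\eta\equiv 1$ outside $\B_{5/4}$, and with $|\nabla\eta|$ bounded. Two properties are crucial: first, since $\B_{11/10} = B_{99/100}\times(-11/10,11/10)$ contains $B_{9/10}\times\{0\}$, one has $\eta(x,0)=0$ for every $x\in B_{9/10}$; second, since $w=0$ a.e. on $\B_{5/4}$, the function $\eta w$ agrees with $w$ a.e. on $\B_2$ (it equals $w$ outside $\B_{5/4}$ and equals $\eta\cdot 0=0=w$ inside).

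Now pick $w_k\in C^\infty_0(\B_2)$ with $\|w_k-w\|_{\H^s(\B_2)}\to 0$ and set $\tilde w_k := \eta w_k$, which belongs to $C^\infty_0(\B_2)$. Writing $\tilde w_k - w = \eta(w_k - w)$ and applying the product rule together with the uniform bounds on $\eta$ and $\nabla\eta$ (the weight $|z|^a$ appears identically on both sides and creates no difficulty), one gets
$$
\|\tilde w_k-w\|_{\H^s(\B_2)}\le \bigl(\|\eta\|_{L^\infty}+\|\nabla\eta\|_{L^\infty}\bigr)\,\|w_k-w\|_{\H^s(\B_2)}\longrightarrow 0.
$$
Thus $\{\tilde w_k\}$ is another admissible sequence in $C^\infty_0(\B_2)$ converging to $w$ in $\H^s(\B_2)$. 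By the well-definedness of the trace (the independence from the approximating sequence, which was checked in the excerpt via Lemma~\ref{TRA}), the traces $T_{\tilde w_k}$ converge to $T_w$ in $L^2(B_{9/5})$. But $T_{\tilde w_k}(x)=\eta(x,0)\,w_k(x,0)=0$ for every $x\in B_{9/10}$, whence $T_w=0$ a.e. on $B_{9/10}$, as desired.

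The main (minor) obstacle is simply checking that multiplication by the fixed cutoff $\eta$ is a bounded operation on $\H^s(\B_2)$ in the weighted norm; this is routine and the argument otherwise reduces to the identity $\eta w = w$ together with the already-proved consistency of the trace map.
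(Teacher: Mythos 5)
Your proof is correct and rests on the same core mechanism as the paper's: modify a smooth approximating sequence by a cutoff separating $\B_{11/10}$ from the complement of $\B_{5/4}$, verify using the hypothesis $\varphi_1=\varphi_2$ a.e.\ on $\B_{5/4}$ that the modification still converges in $\H^s(\B_2)$, and conclude because the modified functions vanish (or agree) on $\B_{11/10}\supset B_{9/10}\times\{0\}$. Your preliminary reduction via linearity of the trace to the single function $w=\varphi_1-\varphi_2$, followed by the multiplicative cutoff $\tilde w_k:=\eta w_k$ and the identity $\eta w=w$, is a slightly cleaner packaging of what the paper does additively with $\tilde\varphi_{1,k}:=\varphi_{1,k}+\Theta(\varphi_{2,k}-\varphi_{1,k})$ (indeed, with $\eta:=1-\Theta$ and $w_k:=\varphi_{1,k}-\varphi_{2,k}$ the two constructions coincide). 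One small wording slip: it is not that \emph{any} approximating sequence for $\varphi_1-\varphi_2$ decomposes as a difference, but rather that one may \emph{choose} the difference sequence $\varphi_{1,k}-\varphi_{2,k}$ and invoke the already-established independence of the trace from the approximating sequence; the conclusion $T_w=T_{\varphi_1}-T_{\varphi_2}$ is correct as stated.
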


\begin{proof} By construction, for any~$i\in\{1,2\}$,
there are sequences~$\varphi_{i,k}\in C^{\infty}_0(\B_2)$
such that~$\|\varphi_i-\varphi_{i,k}\|_{\H^s(\B_2)}
\to0$ as~$k\to+\infty$. 
Let~$\Theta\in C^\infty_0(\B_{5/4})$ with~$\Theta=1$ in~$\B_{11/10}$.
Let also
$$\tilde\varphi_{1,k}:=\varphi_{1,k}+\Theta(\varphi_{2,k}
-\varphi_{1,k}).$$
We claim that
\begin{equation}\label{0iuhgTY6}
\lim_{k\to+\infty}\|\varphi_1-\tilde\varphi_{1,k}\|_{\H^s(\B_2)}=0.
\end{equation}
To prove this, we observe that 
\begin{eqnarray*}
&& |\varphi_1-\tilde\varphi_{1,k}|^2
=\big|\varphi_1-\varphi_{1,k}-\Theta(\varphi_{2,k}-\varphi_{1,k})\big|^2\\
&&\qquad\le C\Big(
|\varphi_1-\varphi_{1,k}|^2 +
|\Theta|^2 |\varphi_{2,k}-\varphi_{1,k}|^2\Big)
\\ &&\qquad\le
C\Big(
|\varphi_1-\varphi_{1,k}|^2 +
+\chi_{\B_{5/4}}|\varphi_{2,k}-\varphi_{1,k}|^2
\Big),\end{eqnarray*}
up to renaming~$C>0$. Hence, since~$\varphi_1=\varphi_2$ a.e. in~$\B_{5/4}$,
$$  \chi_{\B_{5/4}}|\varphi_{2,k}-\varphi_{1,k}|^2
=\chi_{\B_{5/4}}| \varphi_{2,k}-\varphi_2+\varphi_1-\varphi_{1,k} |^2.$$
Therefore
$$
|\varphi_1-\tilde\varphi_{1,k}|^2\le 
C\Big( |\varphi_1-\varphi_{1,k}|^2 +|\varphi_{2,k}-\varphi_2|^2 \Big).  $$
As a consequence, 
\begin{equation}\label{parts}
\lim_{k\to+\infty} \int_{\B_2} |z|^a |\varphi_i-\tilde\varphi_{i,k}|^2\,dX=0.\end{equation}
Moreover, we observe that
\begin{eqnarray*}
&& |\nabla(\varphi_1-\tilde\varphi_{1,k})|^2
=\big|
\nabla(\varphi_1-\varphi_{1,k})-
\nabla(\Theta(\varphi_{2,k}-\varphi_{1,k}))\big|^2\\
&&\qquad\le C\Big(
|\nabla(\varphi_1-\varphi_{1,k})|^2 +
|\nabla\Theta|^2 |\varphi_{2,k}-\varphi_{1,k}|^2
+|\Theta|^2 |\nabla(\varphi_{2,k}-\varphi_{1,k})|^2
\Big)\\ &&\qquad\le
C\Big(
|\nabla(\varphi_1-\varphi_{1,k})|^2 +
\chi_{\B_{5/4}\setminus\B_{11/10}}|\varphi_{2,k}-\varphi_{1,k}|^2
+\chi_{\B_{5/4}}|\nabla(\varphi_{2,k}-\varphi_{1,k})|^2
\Big),\end{eqnarray*}
up to renaming~$C>0$. Hence, since~$\varphi_1=\varphi_2$ a.e. in~$\B_{5/4}$,
$$ \chi_{\B_{5/4}\setminus\B_{11/10}}|\varphi_{2,k}-\varphi_{1,k}|^2
=\chi_{\B_{5/4}\setminus\B_{11/10}}|\varphi_{2,k}-\varphi_2+\varphi_1-\varphi_{1,k}|^2$$
and 
$$ \chi_{\B_{5/4}}|\nabla (\varphi_{2,k}-\varphi_{1,k})|^2
=\chi_{\B_{5/4}}|\nabla ( \varphi_{2,k}-\varphi_2+\varphi_1-\varphi_{1,k} )|^2.$$
Therefore
\begin{eqnarray*}
&& |\nabla(\varphi_1-\tilde\varphi_{1,k})|^2
\\&\le&
C\Big(
|\nabla(\varphi_1-\varphi_{1,k})|^2 +
|\varphi_{2,k}-\varphi_2|^2
+|\varphi_1-\varphi_{1,k}|^2\\ &&\qquad 
+|\nabla(\varphi_{2,k}-\varphi_2)|^2 +|\nabla(\varphi_1-\varphi_{1,k})|^2
\Big),
\end{eqnarray*}
which, after an integration, implies that 
$$ \lim_{k\to+\infty} \int_{\B_2} |z|^a |\nabla(\varphi_i-\tilde\varphi_{i,k})|^2\,dX=0.$$
This and \eqref{parts} give~\eqref{0iuhgTY6}. 

With this, and setting~$\tilde\varphi_{2,k}:=\varphi_{2,k}$,
we have that~$\tilde\varphi_{i,k}\in C^{\infty}_0(\B_2)$,
$\|\varphi_i-\tilde\varphi_{i,k}\|_{\H^s(\B_2)}
\to0$ as~$k\to+\infty$, and, additionally, if~$X\in \B_{11/10}$ then
$$ \tilde\varphi_{1,k}(X)=\tilde\varphi_{2,k}(X).$$
Since~$T_{\varphi_i}$ is the limit in~$L^2(B_{2\cdot(9/10)})$
(and so a.e. in~$B_{2\cdot(9/10)}$, up to subsequences)
of~$T_{\tilde\varphi_{i,k}}$ as~$k\to+\infty$, we have,
for a.e.~$x\in B_{9/10}\subseteq \B_{11/10}\cap \{z=0\}$,
\begin{eqnarray*}
&& T_{\varphi_1}(x)=\lim_{k\to+\infty} T_{\tilde\varphi_{1,k}}(x)=
\lim_{k\to+\infty} {\tilde\varphi_{1,k}}(x,0)
\\ &&\qquad=
\lim_{k\to+\infty} {\tilde\varphi_{2,k}}(x,0)=
\lim_{k\to+\infty} T_{\tilde\varphi_{2,k}}(x)= T_{\varphi_2}(x),
\end{eqnarray*}
as desired.
\end{proof}

Having defined~$T_w$ for any~$w\in \H^s_0(\B_1)$
and~$T_\varphi$ for any~$\varphi\in \H^s(\B_2)$,
we now define the trace of any function~$v\in \D^\varphi$, by setting
$$ T_v := T_{v-\varphi} + T_\varphi.$$
To simplify the notation, given a set~$K\subseteq \B_1\cap\{z=0\}$,
we say that~$v=0$ a.e. in~$K$ to mean that~$T_v=0$
a.e. in~$K$ (i.e.~$v(x,0)=0$ for a.e.~$x\in K$,
in the sense of traces).
We set
\begin{equation}\label{DE-ha}
\D^\varphi_K := \big\{
v\in \D^\varphi
{\mbox{ s.t. }} v=0 {\mbox{ a.e. in }}K
\big\}.
\end{equation}
In some intermediate results, we also need
a slightly more general definition
in which the values attained at~$K$ are not necessarily zero.
For this, given~$\gamma:K\to\R$, we also define
\begin{equation}\label{78hHjjKKKjkK}
\D^\varphi_{K,\gamma} := \big\{
v\in \D^\varphi
{\mbox{ s.t. }} 
v=\gamma {\mbox{ a.e. in }}K
\big\}.\end{equation}
Notice that~$\D^\varphi_{K,\gamma}$ reduces to~$\D^\varphi_K$
when~$\gamma\equiv0$.
The functional structure
of~$\D^\varphi_{K,\gamma}$ that is needed for our
purposes is given by the following result:

\begin{lemma}\label{MARS}
Let~$w_j\in \D^\varphi_{K,\gamma}$ be such that
$$ \sup_{j\in\N} \int_{\B_1} |z|^a |\nabla w_j|^2\,dX<+\infty.$$
Then there exists~$w\in \D^\varphi_{K,\gamma}$ such that, up
to a subsequence,
\begin{equation}\label{PLOK-1}
\lim_{j\to+\infty} \int_{\B_1}|z|^a |w-w_j|^2\,dX =0
\end{equation}
and, for any~$\phi\in \D^\varphi_{K,\gamma}$,
\begin{equation}\label{PLOK-2}
\lim_{j\to+\infty} \int_{\B_1}|z|^a \nabla w_j\cdot\nabla \phi\,dX
=\int_{\B_1}|z|^a \nabla w\cdot\nabla \phi\,dX
.\end{equation}
\end{lemma}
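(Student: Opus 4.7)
The plan is to realize the statement as a standard weak-compactness argument in the Hilbert space $\H^s(\B_1)$ (reflexive, since it is a Hilbert space with the $|z|^a$-weighted inner product, where $|z|^a$ is an $A_2$ Muckenhoupt weight for $a\in(-1,1)$), combined with the closedness of the constraint defining $\D^\varphi_{K,\gamma}$.

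First, I would bound the full norm of $w_j$ from the gradient bound. Write $v_j := w_j-\varphi\in \H^s_0(\B_1)$. Applying the weighted Sobolev/Poincar\'e inequality from Proposition~2.1.1 in~\cite{maria} (already invoked in the proof of Lemma~\ref{CHIA67877:L}) to $v_j$, one gets
$$\int_{\B_1}|z|^a |v_j|^2\,dX \le C\,[v_j]_{\H^s(\B_1)}^2\le C'\left([w_j]^2_{\H^s(\B_1)}+[\varphi]^2_{\H^s(\B_1)}\right),$$
hence $\|w_j\|_{\H^s(\B_1)}$ is uniformly bounded. Since $\H^s(\B_1)$ is a Hilbert space, a subsequence converges weakly to some $w\in\H^s(\B_1)$. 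The weak convergence immediately yields \eqref{PLOK-2} by testing with the $|z|^a$-weighted $L^2$ inner product against $\nabla\phi$ (note that $|z|^a|\nabla\phi|\in L^2$ since $\phi\in\D^\varphi_{K,\gamma}\subset\H^s(\B_1)$). For the strong convergence \eqref{PLOK-1}, I would invoke the Rellich-type compactness of the embedding $\H^s(\B_1)\hookrightarrow L^2(\B_1,|z|^a\,dX)$, again a consequence of the $A_2$-weighted Sobolev theory developed in~\cite{maria}; this compactness upgrades the weak convergence of $w_j$ to strong convergence in the weighted $L^2$ norm.

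It remains to check that the limit $w$ belongs to $\D^\varphi_{K,\gamma}$, i.e.\ satisfies the two constraints: (a) $w-\varphi\in\H^s_0(\B_1)$ and (b) $T_w=\gamma$ a.e.\ on $K$. For (a), $\H^s_0(\B_1)$ is by definition a closed subspace of $\H^s(\B_1)$, hence weakly closed; since $v_j=w_j-\varphi$ converges weakly in $\H^s(\B_1)$ to $w-\varphi$, the limit lies in $\H^s_0(\B_1)$. For (b), I would use the continuity of the trace map $T:\H^s(\B_1)\to L^2(B_{9/10})$ established in Lemma~\ref{TRA} (and its extension to $\D^\varphi$ by the subsequent construction): a bounded linear operator is weakly continuous, so $T_{w_j}\rightharpoonup T_w$ in $L^2(B_{9/10})$. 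The affine set $\{f\in L^2(B_{9/10}):f=\gamma\text{ a.e.\ on }K\}$ is strongly closed and convex, hence weakly closed, and so the trace condition passes to the limit.

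The main obstacle I expect is the weighted Rellich compactness needed for \eqref{PLOK-1}; it is not an entirely trivial statement in the $|z|^a$-weighted setting, but it is the standard ingredient in Caffarelli--Silvestre type problems and is part of the framework of~\cite{maria} already cited in Lemma~\ref{CHIA67877:L}, so I would simply quote it. Everything else --- reflexivity, weak closedness of $\H^s_0(\B_1)$, and weak continuity of the trace --- is soft functional analysis once Lemma~\ref{TRA} is in hand.
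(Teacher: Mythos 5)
Your proof is correct and follows essentially the same route as the paper: weighted Rellich-type compactness for the strong $L^2(|z|^a\,dX)$ convergence, weak compactness in the weighted Sobolev space for the gradient convergence, and weak closedness of the constraint set to conclude that the limit lies in $\D^\varphi_{K,\gamma}$. The paper simply packages the compactness ingredients as citations to Lemma~2.1.2 of~\cite{maria} and Theorems~1.30--1.31 of~\cite{martio}, and treats the membership constraint at once by noting that $\D^\varphi_{K,\gamma}$ is a closed convex subset, rather than separating the $\H^s_0$ condition from the trace condition as you do.
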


\begin{proof} First, we use
Lemma~2.1.2 in~\cite{maria} and we obtain that
there exists~$w$ (with finite weighted Lebesgue norm)
such that~\eqref{PLOK-1} holds true.
Then, by Theorem~1.31 in~\cite{martio},
we obtain~\eqref{PLOK-2}.
It remains to show that
\begin{equation}\label{PLOK-3}
w\in \D^\varphi_{K,\gamma}.
\end{equation}
To this goal, we first observe that~$\H^s_0(\B_1)$ is closed
(with respect to~$\|\cdot\|_{\H^s(\B_1)}$) and convex.
Hence~$\D^\varphi$ is also closed and convex,
and then so is~$\D^\varphi_{K,\gamma}$. Therefore~\eqref{PLOK-3}
follows from~\eqref{PLOK-1}, \eqref{PLOK-2}
and Theorem~1.30 in~\cite{martio} (applied here with~${\mathcal{K}}:=
\D^\varphi_{K,\gamma}$).
\end{proof}

Now we define
$$ \E(v):=\int_{\B_1}|z|^a |\nabla v|^2\,dX.$$
Then we have:

\begin{theorem}\label{MIN-0}
Assume that
\begin{equation}\label{NON-0}
\D^\varphi_{K,\gamma}\ne\varnothing.
\end{equation}
Then there exists a unique~$\Phi^\varphi_{K,\gamma}\in \D^\varphi_{K,\gamma}$
such that
$$ \E(\Phi^\varphi_{K,\gamma}) = \min_{v\in \D^\varphi_{K,\gamma}} \E(v).$$
In particular, taking~$\gamma\equiv0$, we have that if~$\D^\varphi_K\ne\varnothing$
then there exists a unique~$\Phi^\varphi_K\in \D^\varphi_K$ such that
$$ \E(\Phi^\varphi_K) = \min_{v\in \D^\varphi_K} \E(v).$$
\end{theorem}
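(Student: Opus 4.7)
The plan is to run the direct method of the calculus of variations in the weighted space, using Lemma~\ref{MARS} to extract a limit that stays in the constraint set and exploiting the Hilbert structure of~$\E$ both for lower semicontinuity and for uniqueness.

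First I would exploit the assumption~\eqref{NON-0} to set~$m:=\inf_{v\in\D^\varphi_{K,\gamma}}\E(v)\in[0,+\infty)$ and pick a minimizing sequence~$w_j\in\D^\varphi_{K,\gamma}$ with~$\E(w_j)\to m$. Since~$\E(w_j)$ is eventually bounded, we have~$\sup_{j\in\N}\int_{\B_1}|z|^a|\nabla w_j|^2\,dX<+\infty$, so Lemma~\ref{MARS} applies and yields, up to a subsequence, some~$w\in\D^\varphi_{K,\gamma}$ satisfying both~\eqref{PLOK-1} and~\eqref{PLOK-2}. In particular, the membership~$w\in\D^\varphi_{K,\gamma}$ (which is the delicate closedness issue) comes for free from Lemma~\ref{MARS}, so I need not verify it by hand.

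Next I would prove~$\E(w)\le m$ by the standard weak lower semicontinuity argument. Choosing~$\phi:=w$ in~\eqref{PLOK-2} gives
$$\lim_{j\to+\infty}\int_{\B_1}|z|^a\nabla w_j\cdot\nabla w\,dX=\int_{\B_1}|z|^a|\nabla w|^2\,dX=\E(w).$$
On the other hand, by the Cauchy--Schwarz inequality in the weighted space,
$$\int_{\B_1}|z|^a\nabla w_j\cdot\nabla w\,dX\le\sqrt{\E(w_j)}\,\sqrt{\E(w)}.$$
Passing to the limit~$j\to+\infty$ yields~$\E(w)\le\sqrt{m}\,\sqrt{\E(w)}$, hence~$\E(w)\le m$, and since~$w\in\D^\varphi_{K,\gamma}$ the reverse inequality is automatic. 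Setting~$\Phi^\varphi_{K,\gamma}:=w$ gives the existence statement.

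For uniqueness I would use strict convexity of~$\E$ via the parallelogram identity. If~$w_1,w_2\in\D^\varphi_{K,\gamma}$ are both minimizers, then~$(w_1+w_2)/2\in\D^\varphi_{K,\gamma}$ by convexity of the constraint set, and the pointwise identity~$|\nabla w_1|^2+|\nabla w_2|^2=2|\nabla(\tfrac{w_1+w_2}{2})|^2+2|\nabla(\tfrac{w_1-w_2}{2})|^2$, integrated against~$|z|^a$, gives
$$\E\!\left(\tfrac{w_1-w_2}{2}\right)=\tfrac{\E(w_1)+\E(w_2)}{2}-\E\!\left(\tfrac{w_1+w_2}{2}\right)\le m-m=0.$$
Hence~$[w_1-w_2]_{\H^s(\B_1)}=0$. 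Since~$w_1-w_2=(w_1-\varphi)-(w_2-\varphi)\in\H^s_0(\B_1)$, Lemma~\ref{CHIA67877:L} forces~$w_1=w_2$ a.e.\ in~$\B_1$, proving uniqueness. The corollary for~$\gamma\equiv0$ is then an immediate specialization. The main point that could require care is the passage to the limit in~\eqref{PLOK-2}, but this has been packaged for us in Lemma~\ref{MARS}, so the proof is essentially a clean application of Hilbert-space variational machinery.
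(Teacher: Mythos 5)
Your proposal is correct and follows essentially the same route as the paper: the direct method with Lemma~\ref{MARS} supplying both closedness of~$\D^\varphi_{K,\gamma}$ and the weak convergence, then lower semicontinuity from the Hilbert structure, then uniqueness via the parallelogram identity and Lemma~\ref{CHIA67877:L}. The only cosmetic difference is that you deduce lower semicontinuity from Cauchy--Schwarz applied to~$\int|z|^a\nabla w_j\cdot\nabla w$, whereas the paper expands~$\liminf\int|z|^a|\nabla(w_j-w)|^2\ge0$; these are equivalent manifestations of the same Hilbert-space fact.
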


\begin{proof} Let
$$ \iota:=\inf_{v\in \D^\varphi_{K,\gamma}} \E(v).$$
We take a minimizing sequence~$w_j\in\D^\varphi_{K,\gamma}$
such that
\begin{equation}\label{7udvf67890UU}\E(w_j)\le\iota + e^{-j}.\end{equation}
By Lemma~\ref{MARS}, up to a subsequence
we have that there exists~$w\in \D^\varphi_{K,\gamma}$ such that
$$ \lim_{j\to+\infty} \int_{\B_1}|z|^a \nabla w_j \cdot\nabla\phi\,dX=
\int_{\B_1}|z|^a \nabla w\cdot\nabla\phi\,dX,$$
for every~$\phi\in \D^\varphi_{K,\gamma}$.
In particular,
\begin{eqnarray*}
0 &\le& \liminf_{j\to+\infty} \int_{\B_1}|z|^a |\nabla (w_j-w)|^2\,dX
\\ &=&
\liminf_{j\to+\infty} \int_{\B_1}|z|^a |\nabla w_j|^2\,dX
+\int_{\B_1}|z|^a |\nabla w|^2\,dX
-2\int_{\B_1}|z|^a \nabla w_j\cdot\nabla w\,dX\\
&=&\liminf_{j\to+\infty} \int_{\B_1}|z|^a |\nabla w_j|^2\,dX
-\int_{\B_1}|z|^a |\nabla w|^2\,dX\\
&=&\liminf_{j\to+\infty}\E(w_j)-\E(w).
\end{eqnarray*}
By inserting this into~\eqref{7udvf67890UU}
we obtain that
$$ \E(w)\le \liminf_{j\to+\infty} \E(w_j)
\le
\liminf_{j\to+\infty}
\iota+e^{-j}
=\iota.$$
This shows that~$w$ is the desired minimizer.

Now we show that the minimizer is unique.
The proof relies on a standard convexity argument,
we give the details for the facility of the reader.
Suppose that we have two minimizers~$w_1$ and~$w_2$, and let~$w:=(w_1+w_2)/2$.
Notice that~$w\in\D^\varphi_{K,\gamma}$ by the
convexity of the space, hence
$$ \E(w_1)=\E(w_2)\le \E(w).$$
Also~$w_1-w_2\in\H^s_0(\B_1)$, thus
\begin{eqnarray*}
[ w_1-w_2]_{\H^s(\B_1)}^2 &=& \int_{\B_1} |z|^a |\nabla(w_1-w_2)|^2\,dX
\\ &=& \int_{\B_1} |z|^a \big( |\nabla w_1|^2 +|\nabla w_2|^2
-2\nabla w_1\cdot\nabla w_2\big)
\,dX \\
&=& 
\int_{\B_1} |z|^a \big( 2|\nabla w_1|^2 +2|\nabla w_2|^2
-|\nabla (w_1+w_2)|^2\big)\,dX
\\ &=& 2\E(w_1) +2\E(w_2) -4\E(w)
\\ &\le&0.
\end{eqnarray*}
This, together with Lemma~\ref{CHIA67877:L},
shows that~$w_1=w_2$ and so it completes the proof of the uniqueness
claim.
\end{proof}

{F}rom now on, we will implicitly assume that~$\D^\varphi_K\ne\varnothing$.
Then, the minimizer~$\Phi^\varphi_K$ introduced
in Theorem~\ref{MIN-0} is the fractional harmonic replacement
that we consider in this paper. Roughly speaking, it is a minimizer
with boundary datum~$\varphi$ of a fractional energy in the extended
variables under the additional condition of vanishing in the set~$K$.

\subsection{Basic properties of the fractional harmonic replacement}

In this subsection,
we prove some simple, but useful, properties of the
fractional harmonic replacement, such as symmetry and
harmonicity properties and maximum principles.

We remark that the fractional harmonic replacement
is defined in a whole~$(n+1)$-dimensional set.
This can be translated into subset of the halfspace~$\R^{n+1}_+$
if the boundary datum
is even in~$z$, as the forthcoming Lemma~\ref{EVE}
will point out. 

\begin{lemma}\label{EVE}
If~$\varphi(x,-z)=\varphi(x,z)$ then~$\Phi^\varphi_{K,\gamma}(x,-z)=
\Phi^\varphi_{K,\gamma}(x,z)$.
\end{lemma}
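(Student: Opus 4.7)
The plan is to use a reflection-and-uniqueness argument, exactly of the flavor one uses for even boundary data in harmonic replacement problems.

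First, I would define
$$\tilde\Phi(x,z):=\Phi^\varphi_{K,\gamma}(x,-z)$$
and show that $\tilde\Phi$ lies in the admissible class $\D^\varphi_{K,\gamma}$ and achieves the same energy as $\Phi^\varphi_{K,\gamma}$. By the uniqueness part of Theorem~\ref{MIN-0}, this forces $\tilde\Phi=\Phi^\varphi_{K,\gamma}$, which is the claim.

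The key geometric observation is that $\B_1=B_{9/10}\times(-1,1)$ is invariant under the reflection $R(x,z):=(x,-z)$, and that the weight $|z|^a$ is even in $z$. From this, a change of variables $z\mapsto -z$ gives $\E(\tilde\Phi)=\E(\Phi^\varphi_{K,\gamma})$. To check membership in $\D^\varphi_{K,\gamma}$, write
$$\tilde\Phi-\varphi=\bigl(\Phi^\varphi_{K,\gamma}-\varphi\bigr)\circ R,$$
using the assumed evenness $\varphi(x,-z)=\varphi(x,z)$. It then suffices to show that the space $\H^s_0(\B_1)$ is preserved by composition with $R$: if $w_k\in C^\infty_0(\B_1)$ satisfies $\|w_k-(\Phi^\varphi_{K,\gamma}-\varphi)\|_{\H^s(\B_1)}\to 0$, then $w_k\circ R\in C^\infty_0(\B_1)$ (since $\B_1$ is $R$-invariant) and another change of variables, together with the evenness of $|z|^a$, yields $\|w_k\circ R-(\Phi^\varphi_{K,\gamma}-\varphi)\circ R\|_{\H^s(\B_1)}\to 0$. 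Hence $\tilde\Phi-\varphi\in\H^s_0(\B_1)$, i.e.\ $\tilde\Phi\in\D^\varphi$.

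For the trace constraint $\tilde\Phi=\gamma$ a.e.\ on $K\subseteq\B_1\cap\{z=0\}$, I would note that reflection across $\{z=0\}$ fixes the trace pointwise: if $v_k\in C^\infty_0(\B_1)$ approximates $v$ in $\H^s(\B_1)$, then $T_{v_k\circ R}(x)=v_k(x,0)=T_{v_k}(x)$, so passing to the limit $T_{\tilde\Phi}=T_{\Phi^\varphi_{K,\gamma}}=\gamma$ on $K$. Thus $\tilde\Phi\in\D^\varphi_{K,\gamma}$.

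Combining $\tilde\Phi\in\D^\varphi_{K,\gamma}$ with $\E(\tilde\Phi)=\E(\Phi^\varphi_{K,\gamma})=\min_{\D^\varphi_{K,\gamma}}\E$, we conclude that $\tilde\Phi$ is also a minimizer, and the uniqueness statement in Theorem~\ref{MIN-0} gives $\tilde\Phi=\Phi^\varphi_{K,\gamma}$. The only mildly delicate step is the stability of $\H^s_0(\B_1)$ under the reflection $R$, which is really just a bookkeeping check on approximating sequences; everything else is a direct consequence of the symmetry of $|z|^a\,dX$.
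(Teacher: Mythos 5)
Your proof is correct and follows exactly the same reflection-and-uniqueness strategy as the paper: set $\Psi(x,z):=\Phi^\varphi_{K,\gamma}(x,-z)$, observe $\Psi\in\D^\varphi_{K,\gamma}$ with $\E(\Psi)=\E(\Phi^\varphi_{K,\gamma})$ by evenness of $|z|^a$ and $R$-invariance of $\B_1$, and invoke the uniqueness in Theorem~\ref{MIN-0}. In fact you supply more detail than the paper does on the membership $\Psi\in\D^\varphi_{K,\gamma}$ (the stability of $\H^s_0(\B_1)$ under $R$ and the invariance of the trace), which the paper states without comment.
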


\begin{proof} We let~$\Psi(x,z):=\Phi^\varphi_{K,\gamma}(x,-z)$.
Then~$\Psi\in\D^\varphi_{K,\gamma}$. Furthermore
$$ \int_{\B_1}|z|^a |\nabla\Psi|^2\,dX=
\int_{\B_1}|z|^a |\nabla\Phi^\varphi_{K,\gamma}(x,-z)|^2\,dX
=\int_{\B_1}|z|^a |\nabla\Phi^\varphi_{K,\gamma}(x,z)|^2\,dX,$$
hence~$\Psi$ is also a minimizer for~$\E$ in~$\D^\varphi_{K,\gamma}$.
By the uniqueness result in Theorem~\ref{MIN-0},
we conclude that~$\Psi=\Phi^\varphi_{K,\gamma}$.
\end{proof}

Now we write~$\D^0_K$ to mean the functional space~$\D^\varphi_K$
when~$\varphi\equiv0$. In this notation, we have
that the fractional harmonic replacement is orthogonal to~$\D^0_K$,
as stated in the following result:

\begin{lemma}
For every~$\psi \in\D^0_K$,
\begin{equation}\label{0PouyghjYU-1}
\int_{\B_1}|z|^a \nabla\Phi^\varphi_{K,\gamma}
\cdot\nabla\psi\,dX=0\end{equation}
and
\begin{equation}\label{0PouyghjYU-2}
\E(\Phi^\varphi_{K,\gamma}\pm \psi)=\E(\Phi^\varphi_{K,\gamma})+\E(\psi).
\end{equation}
\end{lemma}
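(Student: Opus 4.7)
The plan is to prove \eqref{0PouyghjYU-1} by a standard first-variation argument, and then to deduce \eqref{0PouyghjYU-2} from \eqref{0PouyghjYU-1} by direct expansion of the quadratic functional~$\E$.

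First I would fix~$\psi\in\D^0_K$ and~$t\in\R$, and check that~$\Phi^\varphi_{K,\gamma}+t\psi\in\D^\varphi_{K,\gamma}$. Indeed, since~$\psi\in\D^0_K$ one has~$\psi\in\H^s_0(\B_1)$ and~$T_\psi=0$ a.e.\ on~$K$; hence
$$ (\Phi^\varphi_{K,\gamma}+t\psi)-\varphi=(\Phi^\varphi_{K,\gamma}-\varphi)+t\psi\in\H^s_0(\B_1),$$
because~$\Phi^\varphi_{K,\gamma}-\varphi\in\H^s_0(\B_1)$ by the very definition of~$\D^\varphi$, and the trace of~$\Phi^\varphi_{K,\gamma}+t\psi$ on~$K$ coincides with that of~$\Phi^\varphi_{K,\gamma}$ by linearity of the trace construction, hence equals~$\gamma$ a.e.\ on~$K$. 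So the perturbation is admissible.

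Next I would expand the energy of the perturbation. Since the cross term is finite (by Cauchy--Schwarz it is controlled by~$[\Phi^\varphi_{K,\gamma}]_{\H^s(\B_1)}\,[\psi]_{\H^s(\B_1)}<+\infty$), one may write
$$ \E(\Phi^\varphi_{K,\gamma}+t\psi)=\E(\Phi^\varphi_{K,\gamma})
+2t\int_{\B_1}|z|^a\nabla\Phi^\varphi_{K,\gamma}\cdot\nabla\psi\,dX+t^2\,\E(\psi).$$
By Theorem~\ref{MIN-0}, the right-hand side attains its minimum in~$t$ at~$t=0$. Differentiating in~$t$ and evaluating at~$t=0$ (or equivalently, minimizing the quadratic polynomial in~$t$) forces the linear coefficient to vanish, giving~\eqref{0PouyghjYU-1}.

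Finally, to obtain~\eqref{0PouyghjYU-2}, I would simply expand
$$ \E(\Phi^\varphi_{K,\gamma}\pm\psi)=\E(\Phi^\varphi_{K,\gamma})\pm 2\int_{\B_1}|z|^a\nabla\Phi^\varphi_{K,\gamma}\cdot\nabla\psi\,dX+\E(\psi),$$
and invoke~\eqref{0PouyghjYU-1} to kill the cross term. There is no serious obstacle here: the only point to be careful about is confirming admissibility of the perturbation~$\Phi^\varphi_{K,\gamma}+t\psi$ in the constrained class~$\D^\varphi_{K,\gamma}$, which is immediate once one recalls that~$\D^0_K$ consists precisely of functions in~$\H^s_0(\B_1)$ whose trace vanishes on~$K$.
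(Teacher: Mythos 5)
Your argument is correct and follows essentially the same route as the paper: verify that $\Phi^\varphi_{K,\gamma}+t\psi$ remains in $\D^\varphi_{K,\gamma}$, use minimality to force the linear coefficient of the quadratic expansion in $t$ to vanish, and then deduce \eqref{0PouyghjYU-2} by re-expanding. The only cosmetic difference is that you allow all $t\in\R$ (which is fine, since $\D^\varphi_{K,\gamma}$ is an affine subspace closed under adding multiples of $\psi\in\D^0_K$) while the paper restricts to $\eps\in(-1,1)$.
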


\begin{proof} Notice that for every~$\eps\in(-1,1)$, we have that~$
\Phi^\varphi_{K,\gamma}+\eps\psi\in \D^\varphi_{K,\gamma}$, therefore~$
\E(\Phi^\varphi_{K,\gamma}+\eps\psi)-
\E(\Phi^\varphi_{K,\gamma})\ge0$ and then~\eqref{0PouyghjYU-1} follows.

Then, using~\eqref{0PouyghjYU-1},
\begin{eqnarray*}
&& \E(\Phi^\varphi_{K,\gamma}\pm \psi)-\E(\Phi^\varphi_{K,\gamma})-\E(\psi) \\
&=& \int_{\B_1}|z|^a \big[
|\nabla\Phi^\varphi_{K,\gamma}|^2+|\nabla\psi|^2
\pm 2\nabla\Phi^\varphi_{K,\gamma}
\cdot\nabla\psi
\big]\,dX\\ &&\qquad-
\int_{\B_1}|z|^a 
|\nabla\Phi^\varphi_{K,\gamma}|^2\,dX-\int_{\B_1}|z|^a
|\nabla\psi|^2\,dX
\\ &=&0,
\end{eqnarray*}
that establishes~\eqref{0PouyghjYU-2}.
\end{proof}

Now we show that the fractional harmonic extension is indeed
``harmonic'' outside the constrain, i.e. it satisfies
a weighted elliptic equation in the interior of~$\B_1\setminus K$.
The precise statement goes as follows:

\begin{lemma}\label{HARM:0p}
We have that
\begin{equation}\label{9YHBVf}
{\rm div}\, (|z|^a \nabla\Phi^\varphi_{K,\gamma})=0\end{equation}
in the interior of~$\B_1\setminus K$, in the distributional sense.
\end{lemma}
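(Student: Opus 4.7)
The plan is to deduce \eqref{9YHBVf} directly from the orthogonality relation \eqref{0PouyghjYU-1}, which already encodes the Euler--Lagrange condition for the minimizer. The only thing to check is that every distributional test function is admissible as a competitor direction, i.e.\ belongs to $\D^0_K$.

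More precisely, I would pick an arbitrary $\eta\in C^\infty_0(\text{int}(\B_1\setminus K))$ and argue that $\eta\in \D^0_K$, as follows. Since $\eta$ is smooth with compact support inside $\B_1$, it lies in $C^\infty_0(\B_1)$, hence in $\H^s_0(\B_1)$ by the very definition of this space. In particular $\eta-0\in\H^s_0(\B_1)$, so $\eta\in\D^0$. Moreover, since $\mathrm{supp}\,\eta$ is compact and disjoint from $K$, the trace $T_\eta$ computed through Lemma~\ref{TRA} vanishes a.e.\ on $K$ (indeed $\eta(x,0)\equiv 0$ on a neighbourhood of $K$). Therefore $\eta\in\D^0_K$, by the definition \eqref{DE-ha}.

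Having verified admissibility, I would apply \eqref{0PouyghjYU-1} with $\psi:=\eta$, obtaining
$$ \int_{\B_1} |z|^a\,\nabla \Phi^\varphi_{K,\gamma}\cdot \nabla\eta\,dX=0.$$
Since $\eta\in C^\infty_0(\text{int}(\B_1\setminus K))$ was arbitrary, this is exactly the statement that $\mathrm{div}(|z|^a\nabla \Phi^\varphi_{K,\gamma})=0$ in the interior of $\B_1\setminus K$ in the distributional sense, which proves \eqref{9YHBVf}.

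The only mildly delicate point is the admissibility check — specifically the verification that the trace of a smooth function supported away from $K$ agrees with its pointwise restriction to $\{z=0\}$, so that $T_\eta\equiv 0$ on $K$. This is essentially immediate from the approximation-based definition of the trace (take the constant approximating sequence $\eta_k=\eta$), but it is the place where the auxiliary material on trace spaces developed in Section~\ref{F:S} is actually used. Once this bookkeeping is in place, the proof reduces to invoking \eqref{0PouyghjYU-1}.
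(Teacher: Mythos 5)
Your proof is correct and follows essentially the same route as the paper: take a smooth compactly supported test function in the interior of $\B_1\setminus K$, observe that it belongs to $\D^0_K$, and invoke the orthogonality relation \eqref{0PouyghjYU-1}. You simply spell out the trace bookkeeping that the paper leaves implicit.
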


\begin{proof} Let~${\mathcal{N}}$ be an open set contained in~$
\B_1\setminus K$. Let~$\psi\in C^\infty_0( {\mathcal{N}})$.
Then~$\psi=0$ in~$K$ and so~$\psi\in\D^0_K$. Accordingly,
by~\eqref{0PouyghjYU-1},
$$ \int_{\B_1}|z|^a \nabla\Phi^\varphi_{K,\gamma}\cdot\nabla\psi\,dX=0,$$
which establishes \eqref{9YHBVf} in the distributional sense.
\end{proof}

The forthcoming two results in Lemmata~\ref{MPL-1}
and~\ref{MPL-2}
provide uniform bounds
on~$\Phi^\varphi_K$ by Maximum Principle. To this goal,
we need the ancillary observations in the following Lemmata~\ref{AUX:0ugBIS}--\ref{AUX:0ug:NEG}: 

\begin{lemma}\label{AUX:0ugBIS}
Let~$c\in\R$ and~$\phi\in\H^s(\B_1)$.
Let~$\phi_k\in\H^s(\B_1)$ be a sequence such that
\begin{equation}\label{CV-XC-0-PRE}
\lim_{k\to+\infty} \| \phi-\phi_k\|_{\H^s(\B_1)}=0.
\end{equation}
Let~$\psi:=(\phi-c)^+$ and~$\psi_k:=(\phi_k-c)^+$. Then, up to a subsequence,
$$ \lim_{k\to+\infty} \| \psi-\psi_k\|_{\H^s(\B_1)}=0.$$
\end{lemma}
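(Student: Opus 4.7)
The plan is to handle the two pieces of the norm $\|\cdot\|_{\H^s(\B_1)}$ separately. For the weighted $L^2$ part, the map $t\mapsto (t-c)^+$ is $1$-Lipschitz, so one has the pointwise bound $|\psi-\psi_k|\le |\phi-\phi_k|$, and hence
$$ \int_{\B_1}|z|^a |\psi-\psi_k|^2\,dX \le \int_{\B_1}|z|^a|\phi-\phi_k|^2\,dX \longrightarrow 0 $$
by~\eqref{CV-XC-0-PRE}; no subsequence extraction is needed for this part, and it is also what tells us that $\psi,\psi_k$ actually lie in $\H^s(\B_1)$ (once the gradient part is handled).

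The substantive step is the gradient estimate. I will invoke the chain rule for positive-part truncation in the weighted Sobolev space $\H^s(\B_1)$ (valid because $|z|^a$ is an $A_2$-weight, as already used in~\cite{maria,martio}): for $u\in\H^s(\B_1)$ one has $\nabla (u-c)^+ = \chi_{\{u>c\}}\nabla u$ a.e., together with the companion Stampacchia-type identity $\nabla u = 0$ a.e.\ on $\{u=c\}$. Applied to both $\phi$ and $\phi_k$, this yields the decomposition
$$ \nabla\psi_k - \nabla\psi = \chi_{\{\phi_k>c\}}\bigl(\nabla\phi_k-\nabla\phi\bigr) + \bigl(\chi_{\{\phi_k>c\}}-\chi_{\{\phi>c\}}\bigr)\nabla\phi. $$
The first term is bounded in modulus by $|\nabla\phi_k-\nabla\phi|$, whose $L^2(\B_1, |z|^a\,dX)$ norm vanishes by~\eqref{CV-XC-0-PRE}.

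For the second term, I extract a subsequence along which $\phi_k\to\phi$ a.e.\ in $\B_1$; this is legitimate because convergence in $L^2(\B_1,|z|^a\,dX)$ gives a.e.\ convergence of a subsequence with respect to the weighted measure, and $|z|^a>0$ off the Lebesgue-null set $\{z=0\}$. Along this subsequence, $\chi_{\{\phi_k>c\}}\to\chi_{\{\phi>c\}}$ pointwise a.e.\ on $\{\phi\ne c\}$, while on $\{\phi=c\}$ the factor $\nabla\phi$ already vanishes a.e.\ by the Stampacchia identity. Hence $(\chi_{\{\phi_k>c\}}-\chi_{\{\phi>c\}})\nabla\phi \to 0$ a.e., and it is dominated in modulus by $|\nabla\phi|\in L^2(\B_1,|z|^a\,dX)$. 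Weighted dominated convergence then forces this piece to zero in $L^2(\B_1,|z|^a\,dX)$, which together with the first-term estimate closes the gradient bound.

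The main potential obstacle is simply the verification of the chain rule and Stampacchia identity for positive-part truncation in $\H^s(\B_1)$; these are standard facts in the theory of $A_2$-weighted Sobolev spaces (and are used implicitly elsewhere in the paper via~\cite{maria,martio}), so once they are on the table the argument reduces to the $1$-Lipschitz bound plus a routine dominated-convergence step along an a.e.\ subsequence.
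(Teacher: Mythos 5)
Your argument is correct and reaches the same conclusion by a somewhat slicker route than the paper. Both proofs rest on the same three ingredients: the chain rule $\nabla(\phi-c)^+=\chi_{\{\phi>c\}}\nabla\phi$ (Corollary~2.1 in~\cite{FKS} is the paper's reference), the Stampacchia identity $\nabla\phi=0$ a.e.\ on $\{\phi=c\}$ (obtained in the paper by restricting to $W^{1,1}_{\rm loc}$ domains away from $\{z=0\}$ and invoking Theorem~6.19 of~\cite{lieb-loss}), and extraction of an a.e.-convergent subsequence followed by dominated convergence in $L^1(\B_1,|z|^a\,dX)$. Where you diverge is in the bookkeeping. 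For the gradient term, the paper partitions $\B_1$ into $\{\phi>c\}\cap\{\phi_k>c\}$, $\{\phi>c\ge\phi_k\}$ and $\{\phi_k>c\ge\phi\}$, estimates the resulting three integrals, and uses $|\nabla\phi_k|^2\le 2(|\nabla\phi_k-\nabla\phi|^2+|\nabla\phi|^2)$ to convert the last one into a term involving only $|\nabla\phi|$; your identity
\[
\nabla\psi_k-\nabla\psi=\chi_{\{\phi_k>c\}}\bigl(\nabla\phi_k-\nabla\phi\bigr)+\bigl(\chi_{\{\phi_k>c\}}-\chi_{\{\phi>c\}}\bigr)\nabla\phi
\]
packages the same estimate into two terms and absorbs the ``extra'' $\nabla\phi_k$ contribution automatically, since the support of your second term is precisely the symmetric difference $\{\phi>c\ge\phi_k\}\cup\{\phi_k>c\ge\phi\}$ on which the paper localizes. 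For the weighted $L^2$ term, your $1$-Lipschitz bound $|\psi-\psi_k|\le|\phi-\phi_k|$ settles it in one line, whereas the paper takes a longer detour (Theorem~4.9(b) in~\cite{brezis} to produce an $L^1$ dominating function, then dominated convergence); your remark that this part needs no subsequence extraction is accurate. In short: identical key lemmas, a more economical decomposition on your side.
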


\begin{proof} First, we observe that, up to a subsequence,
$\phi_k\to\phi$ a.e. in~$\B_1$. Accordingly
\begin{equation}\label{0P}
\limsup_{k\to+\infty} 
\chi_{ \{\phi_k>c\ge\phi\} }\le \chi_{ \{ \phi=c\} }
\;{\mbox{ and }}\;
\limsup_{k\to+\infty} 
\chi_{ \{\phi>c\ge\phi_k\} }\le\chi_{ \{ \phi=c\} }
\end{equation}
a.e. in~$\B_1$.
Also, for any domain~${\mathcal{N}}$ compactly contained
in~$\B_1\setminus \{z=0\}$, we have that~$\phi\in W^{1,1}_{\rm loc}
({\mathcal{N}})$ and so, by Stampacchia's Theorem (see e.g. Theorem~6.19
in~\cite{lieb-loss}), it follows that~$\nabla\phi=0$ a.e. in~$\{
\phi=c\}$, and so
\begin{equation*}
{\mbox{$|z|^a|\nabla\phi|^2 \chi_{ \{
\phi=c\} }=0$ a.e. in~$\B_1$.}}\end{equation*}
Therefore, by~\eqref{0P},
\begin{eqnarray*}
&& \lim_{k\to+\infty} |z|^a|\nabla\phi|^2 
\chi_{ \{\phi_k>c\ge\phi\} }=0\\
{\mbox{and }}
&& \lim_{k\to+\infty} |z|^a|\nabla\phi|^2 
\chi_{ \{\phi>c\ge\phi_k\} }=0.
\end{eqnarray*}
Consequently, by the Dominated Convergence Theorem,
\begin{equation}\label{HjnmUI00}
\begin{split}
&\lim_{k\to+\infty} 
\int_{\B_1\cap \{ \phi>c\ge\phi_k\}
}|z|^a |\nabla\phi |^2\,dX=0\\
{\mbox{and }}\;&\lim_{k\to+\infty}
\int_{\B_1\cap \{ \phi_k>c\ge\phi\}
}|z|^a |\nabla\phi |^2\,dX
=0.\end{split}
\end{equation}
Moreover, by Corollary~2.1 in~\cite{FKS},
\begin{eqnarray*}
&& [\psi-\psi_k]_{\H^s(\B_1)}^2
=\int_{\B_1}|z|^a |\nabla\psi -\nabla\psi_k|^2\,dX\\
\\&&\qquad=
\int_{\B_1}|z|^a |\nabla(\phi -c)^+ -\nabla(\phi_k -c)^+
|^2\,dX \\
\\&&\qquad=
\int_{\B_1\cap\{ \phi>c\}\cap
\{ \phi_k>c\}}
|z|^a |\nabla\phi -\nabla\phi_k |^2\,dX
\\&&\qquad\qquad+
\int_{\B_1\cap \{ \phi>c\ge\phi_k\}
}|z|^a |\nabla\phi |^2\,dX
\\&&\qquad\qquad+
\int_{\B_1\cap \{ \phi_k>c\ge\phi\}}
|z|^a |\nabla\phi_k|^2\,dX.
\end{eqnarray*}
We also observe that
$$ |\nabla\phi_k|^2 \le 2\Big(|\nabla\phi_k-\nabla\phi |^2
+|\nabla\phi |^2\Big)$$
and therefore
\begin{eqnarray*}&& [\psi-\psi_k]_{\H^s(\B_1)}^2
\le 3
\int_{\B_1} |z|^a |\nabla\phi -\nabla\phi_k |^2\,dX
\\ &&\quad+\int_{\B_1\cap \{ \phi>c\ge\phi_k\}
}|z|^a |\nabla\phi |^2\,dX
+2\int_{\B_1\cap \{ \phi_k>c\ge\phi\}
}|z|^a |\nabla\phi |^2\,dX.\end{eqnarray*}
{F}rom this, \eqref{CV-XC-0-PRE}
and~\eqref{HjnmUI00}, we get
\begin{equation}\label{K:l67:001:L}
\lim_{k\to+\infty}[\psi-\psi_k]_{\H^s(\B_1)}^2
\le0.\end{equation}
Now we observe that~$|z|^a |\phi-\phi_k|^2\to0$
in~$L^1(\B_1)$, thanks to~\eqref{CV-XC-0-PRE}.
Therefore (see e.g. Theorem~4.9(b) in~\cite{brezis}), we know that,
up to a subsequence,
$$ |z|^a |\phi-\phi_k|^2 \le h,$$
for every~$k\in\N$, with~$h\in L^1(\B_1)$. As a consequence,
$$ |z|^{\frac{a}{2}} |\phi_k|\le
|z|^{\frac{a}{2}} |\phi-\phi_k|
+|z|^{\frac{a}{2}} |\phi|\le \sqrt{h}+|z|^{\frac{a}{2}} |\phi|.$$
Consequently,
$$ |z|^{\frac{a}{2}} |\psi-\psi_k|
\le |z|^{\frac{a}{2}} \Big( |\phi|+|\phi_k|+2|c|\Big)
\le 2|z|^{\frac{a}{2}} \Big( |\phi|+|c|\Big) 
+\sqrt{h}$$
and thus
$$ |z|^{a} |\psi-\psi_k|^2
\le C\,\Big[|z|^a\Big( |\phi|^2+c^2\Big)+h\Big]=:g,$$
with~$g\in L^1(\B_1)$. So, by
the Dominated Convergence Theorem,
$$ \lim_{k\to+\infty}\int_{\B_1}|z|^a|\psi-\psi_k|^2\,dX =0. $$
This formula and~\eqref{K:l67:001:L}
imply the desired result.
\end{proof}

We need now a technical modification
of Lemma~\ref{AUX:0ugBIS}. Namely, given~$\phi\in\H^s(\B_1)$,
in order to approximate~$\phi^+$
in~$\H^s(\B_1)$ it is not always convenient to consider
the positive parts of the approximating sequence
(as done in Lemma~\ref{AUX:0ugBIS}), since taking positive
parts may decrease the regularity of the smooth functions.
To avoid this, we introduce a smooth modification
of an approximating sequence, which still converges to
the positive part in the limit.
The key step in this procedure is given by the following result:

\begin{lemma}\label{5599}
Let~$\phi\in\H^s(\B_1)$ and fix~$\eps>0$.
Then, there exist~$\overline\theta_\eps$,
$\underline\theta_\eps\in C^\infty(\R)$ such that~$\underline\theta_\eps(t)\le t^+\le \overline\theta_\eps(t)$
for any~$t\in\R$ and
\begin{equation}\label{0iF-G} \|\phi^+ -\overline\theta_\eps(\phi)\|_{\H^s(\B_1)}
+\|\phi^+ -\underline\theta_\eps(\phi)\|_{\H^s(\B_1)}\le\eps.\end{equation}
\end{lemma}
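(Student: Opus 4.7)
The plan is to replace the non-smooth function $t\mapsto t^+$ by two smooth monotone approximations, one from above and one from below, and then verify $\H^s(\B_1)$-convergence by essentially re-running the argument of Lemma~\ref{AUX:0ugBIS}. A convenient explicit choice is, for $\delta\in(0,1)$ to be chosen at the end in terms of $\eps$,
\[
\overline\theta_\delta(t) := \tfrac{1}{2}\bigl(t+\sqrt{t^2+\delta^2}\bigr),
\qquad
\underline\theta_\delta(t) := \overline\theta_\delta(t)-\delta.
\]
One checks at once that $\overline\theta_\delta,\underline\theta_\delta\in C^\infty(\R)$, that $\underline\theta_\delta(t)\le t^+\le\overline\theta_\delta(t)$ for every $t\in\R$, and that $|\overline\theta_\delta(t)-t^+|\le\delta/2$ and $|\underline\theta_\delta(t)-t^+|\le\delta$ uniformly in $t\in\R$.

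The key property is that $\overline\theta_\delta$ and $\underline\theta_\delta$ are globally Lipschitz, with common derivative
\[
\overline\theta_\delta'(t)=\underline\theta_\delta'(t) = \tfrac12\Bigl(1+\tfrac{t}{\sqrt{t^2+\delta^2}}\Bigr)\in[0,1],
\]
converging pointwise to $\chi_{\{t>0\}}$ for every $t\ne 0$ as $\delta\to 0^+$. The chain rule in the weighted Sobolev setting then yields $\overline\theta_\delta(\phi)\in\H^s(\B_1)$ with $\nabla\overline\theta_\delta(\phi)=\overline\theta_\delta'(\phi)\nabla\phi$ a.e.\ in~$\B_1$, and combined with the identity $\nabla\phi^+=\chi_{\{\phi>0\}}\nabla\phi$ a.e.\ in~$\B_1$ (from Corollary~2.1 in~\cite{FKS}, as already used in the proof of Lemma~\ref{AUX:0ugBIS}) this gives
\[
\nabla\overline\theta_\delta(\phi)-\nabla\phi^+=\bigl(\overline\theta_\delta'(\phi)-\chi_{\{\phi>0\}}\bigr)\nabla\phi\quad\text{a.e.\ in }\B_1.
\]

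To finish, I would follow verbatim the Dominated Convergence scheme of Lemma~\ref{AUX:0ugBIS}. Stampacchia's theorem, applied on compactly contained subdomains of $\B_1\setminus\{z=0\}$, gives $\nabla\phi=0$ a.e.\ on $\{\phi=0\}$, so the integrand $|z|^a\bigl|\overline\theta_\delta'(\phi)-\chi_{\{\phi>0\}}\bigr|^2|\nabla\phi|^2$ vanishes pointwise a.e.\ in $\B_1$ as $\delta\to 0^+$ and is dominated by $|z|^a|\nabla\phi|^2\in L^1(\B_1)$; this controls the seminorm piece of $\|\overline\theta_\delta(\phi)-\phi^+\|_{\H^s(\B_1)}$. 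The weighted $L^2$ piece is handled immediately by the uniform pointwise bound $|\overline\theta_\delta(\phi)-\phi^+|\le\delta/2$ together with the fact that $|z|^a\in L^1(\B_1)$. The reasoning for $\underline\theta_\delta$ is identical, and choosing $\delta$ small enough in terms of $\eps$ produces~\eqref{0iF-G}.

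The main obstacle is really the passage to the limit in the weighted seminorm, which rests on (i) the chain rule for Lipschitz functions composed with elements of $\H^s(\B_1)$ and (ii) Stampacchia's vanishing of $\nabla\phi$ on $\{\phi=0\}$; both ingredients are already deployed in Lemma~\ref{AUX:0ugBIS}, so the present statement can be viewed as the smooth counterpart of that lemma, obtained by replacing an arbitrary approximating sequence $\phi_k$ with the specific smooth approximants $\overline\theta_\delta(\phi)$ and $\underline\theta_\delta(\phi)$.
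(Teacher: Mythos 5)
Your proposal is correct, and the argument is sound. The key points all check: $\overline\theta_\delta(t)=\tfrac12(t+\sqrt{t^2+\delta^2})$ satisfies $\overline\theta_\delta\ge t^+$ (indeed $\overline\theta_\delta(t)-t^+=\tfrac{\delta^2}{2(\sqrt{t^2+\delta^2}+|t|)}\le\delta/2$), the shifted version $\underline\theta_\delta$ satisfies $\underline\theta_\delta\le t^+$, the derivative $\overline\theta_\delta'=\underline\theta_\delta'\in[0,1]$ is uniformly Lipschitz-bounded and converges pointwise to $\chi_{\{t>0\}}$ for $t\neq 0$, and at $t=0$ (where the derivative stays at $1/2$) Stampacchia kills $\nabla\phi$, so the DCT integrand vanishes a.e.\ and is dominated by $|z|^a|\nabla\phi|^2$. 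The weighted $L^2$ piece is controlled by the uniform bound $|\theta_\delta(\phi)-\phi^+|\le\delta$ and $|z|^a\in L^1(\B_1)$.

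Your construction, however, is genuinely different from the paper's. The paper builds $\underline\theta_\eps$ and $\overline\theta_\eps$ out of cutoff functions so that they \emph{coincide exactly} with $t^+$ whenever $|t|\ge\eps$; in particular the upper bound $\overline\theta_\eps$ requires a small normalization trick (shifting a smoothed Heaviside function so that its primitive hits $t^+$ exactly at $t=1$). This exact-agreement property is then used to localize the seminorm integrand to $\{|\phi|<\eps\}$ before applying DCT. Your closed-form ``softplus'' approximant never equals $t^+$, so you lose that localization, but you compensate with the uniform pointwise estimate $|\theta_\delta-t^+|\le\delta$ and the observation that the derivative mismatch only survives at $\{\phi=0\}$, where $\nabla\phi=0$ a.e. The net effect is the same; your route is shorter and avoids the normalization computation, while the paper's route makes the locality of the error set explicit (a structural feature that is harmless to forgo here). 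Both rely on the same chain rule for Lipschitz compositions (Lemma~2.1 in FKS) and on Stampacchia's theorem, so the analytical core is shared.

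One small remark: your $\delta$ necessarily depends on $\phi$ (the DCT rate is $\phi$-dependent), but the statement allows the functions $\overline\theta_\eps,\underline\theta_\eps$ to depend on the fixed $\phi$ — and indeed the paper's own proof also produces a $\phi$-dependent rate and finishes by ``renaming $\eps$'' — so this is not an issue.
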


\begin{proof} Let~$\tau\in C^\infty(\R,[0,1])$ such that~$\tau(t)=0$
for any~$t\le 1/2$, and~$\tau(t)=1$ for any~$t\ge 3/4$.
Let also~$\Theta(t):=t\,\tau(t)$ and
$$ \underline\theta_\eps(t):= \eps \Theta\left( \frac{t}{\eps}\right).$$
By construction, $\Theta(t)\le t^+$ and so~$\underline\theta_\eps(t)\le t^+$
for any~$t\in\R$. 

Moreover,
\begin{equation}\label{UI-01-00}
|\underline\theta_\eps'|\le C,\end{equation}
for some~$C>0$, and
\begin{equation}\label{UI-01-01}
{\mbox{$ \underline\theta_\eps(t) = t^+$ for any~$|t|\ge\eps$.}}\end{equation}

Now we take a nondecreasing function~$\mu\in C^\infty(\R)$
such that~$\mu(t)=0$ if~$t\le -1/100$, $\mu(t)\in(0,1)$ for any~$t\in(-1/100,1/100)$
and~$\mu(t)=1$ for any~$t\ge1/100$. 
Notice that 
\begin{equation}\label{intmu}
\iota:=\int_{-\infty}^{\frac{1}{100}}\mu(t)\,dt \le\frac{1}{50}.
\end{equation}
For any~$r>0$, we define
$$ \mu_r(t):=\mu\left(t-\frac{99}{100}+r\right).$$
We observe that~$\mu_r(t)=0$ if~$t\le (98/100)-r$, $\mu_r(t)\in(0,1)$ 
for any~$t\in((98/100)-r,1-r)$
and~$\mu_r(t)=1$ for any~$t\ge 1-r$. 

We claim that 
\begin{equation}\label{uffa08997889}
{\mbox{there exists $r\in[0,1]$ such that }} 
\int_{-\infty}^{1} \mu_r(t)\,dt =1.
\end{equation}
To prove this, notice that, using the change of variable~$\tilde t=t-\frac{99}{100}+r$ 
and recalling~\eqref{intmu}, 
\begin{eqnarray*}
&& \int_{-\infty}^{1} \mu_r(t)\,dt = \int_{-\infty}^{1} \mu\left(t-\frac{99}{100}+r\right)\,dt\\
&&\qquad = \int_{-\infty}^{\frac{1}{100}+r} \mu(\tilde t)\,d\tilde t 
= \int_{-\infty}^{\frac{1}{100}} \mu(\tilde t)\,d\tilde t + 
\int_{\frac{1}{100}}^{\frac{1}{100}+r} \mu(\tilde t)\,d\tilde t\\
&&\qquad =\iota + \int_{\frac{1}{100}}^{\frac{1}{100}+r} 1\,d\tilde t 
=\iota + r.
\end{eqnarray*}
Now, if~$r=0$ then~$\iota\le 1/50$, thanks to~\eqref{intmu}, 
and if~$r=1$ then~$\iota +1\ge 1$, since~$\iota\ge0$. 
So, by continuity, we obtain the claim in~\eqref{uffa08997889}. 

Notice that the parameter~$r$ given by~\eqref{uffa08997889} will be considered
as fixed from now on. We define
$$ T(t):= \int_{-\infty}^t \mu_r (\rho)\,d\rho.$$
We claim that
\begin{equation}\label{UI-017899}
{\mbox{$T(t) = t^+$ for any~$|t|\ge 1$.}}\end{equation}
Indeed, if~$t\le -1$ then~$t\le (98/100)-r$ and so we have that~$T(t)=0=t^+$,
since the integrand vanishes. Also, if~$t\ge 1$ then
$$ T(t)=\int_{-\infty}^1 \mu_r (\rho)\,d\rho
+\int_{1}^t \mu_r (\rho)\,d\rho =
1+\int_{1}^t 1\,d\rho=t,$$
where~\eqref{uffa08997889} was used. This proves~\eqref{UI-017899}.

We also claim that
\begin{equation}\label{BISUI-017899}
{\mbox{$T(t) \ge t^+$ for any~$t\in\R$.}}\end{equation}
To prove it, we notice that it is enough to consider the case~$t\in(-1,1)$,
in view of~\eqref{UI-017899}. Moreover, $T(t)\ge 0=t^+$
for any~$t\le0$, so we can focus on the case~$t\in(0,1)$.
For this, for any~$t\in(0,1)$, we let~$H(t):=T(t)-t^+=T(t)-t$.
Then
$$ H'(t)=T'(t) -1 =\mu_r(t)-1\le0.$$
Therefore, for any~$t\in(0,1)$,
$$ T(t)-t^+ = H(t) \ge H(1) = T(1)-1=0,$$
due to~\eqref{UI-017899}, and this completes the proof of~\eqref{BISUI-017899}.

Now we define
$$ \overline\theta_\eps(t):= \eps T \left( \frac{t}{\eps}\right).$$
{F}rom~\eqref{BISUI-017899}, we know that~$\overline\theta_\eps(t)\ge t^+$
for any~$t\in\R$. Also,
\begin{equation}\label{UI-01-00-bis}
|\overline\theta_\eps'|\le C,\end{equation}
for some~$C>0$, and we deduce from~\eqref{UI-017899} that 
\begin{equation}\label{UI-01-01-bis}
{\mbox{$ \overline\theta_\eps(t) = t^+$ for any~$|t|\ge \eps$.}}\end{equation}
Having completed the construction of~$\overline\theta_\eps$
and~$\underline\theta_\eps$, we now prove~\eqref{0iF-G}.
To this goal, by Lemma~2.1 in~\cite{FKS}, we have that~$\nabla( \underline\theta_\eps(\phi))
= \underline\theta_\eps'(\phi)\nabla \phi$, therefore
\begin{equation}\label{STG}\begin{split}
& \|\phi^+ - \underline\theta_\eps(\phi)\|_{\H^s(\B_1)}^2 =
\int_{\B_1} |z|^a \big| \nabla\phi^+ -  \underline\theta_\eps'(\phi)\nabla \phi\big|^2\,dX
\\ &\qquad=
\int_{\B_1\cap \{ |\phi|<\eps \}
} |z|^a \big| \nabla\phi^+ -  \underline\theta_\eps'(\phi)\nabla \phi\big|^2\,dX,
\end{split}\end{equation}
since the other contributions cancel, thanks to~\eqref{UI-01-01}.

We also use~\eqref{UI-01-00}
to see that~$|z|^a \big| \nabla\phi^+ -  \underline\theta_\eps'(\phi)\nabla \phi\big|^2
\,\chi_{ \{ |\phi|<\eps \} }\le
C \, |z|^a |\nabla\phi|^2\in L^1(\B_1)$, since~$\phi\in\H^s(\B_1)$,
therefore, by the Dominated Convergence Theorem
and the Theorem of Stampacchia (see e.g. Theorem~6.19
in~\cite{lieb-loss}), we have
$$ \lim_{\eps\to0}
\int_{\B_1\cap \{ |\phi|<\eps \}
} |z|^a \big| \nabla\phi^+ -  \underline\theta_\eps'(\phi)\nabla \phi\big|^2\,dX
\le C
\int_{\B_1\cap \{ \phi=0 \}
} |z|^a |\nabla\phi|^2 \,dX=0.$$
This and~\eqref{STG} give that
\begin{equation}\label{POLKAST-0}
\lim_{\eps\to0} [\phi^+ -\underline\theta_\eps(\phi)]_{\H^s(\B_1)}^2 =0.
\end{equation}
Now we observe that~$|\underline\theta_\eps(t)|\le C(1+|t|)$,
due to~\eqref{UI-01-00}, and therefore, by 
the Dominated Convergence Theorem,
$$ \lim_{\eps\to0} \int_{\B_1}|z|^a|\phi^+ -\underline\theta_\eps(\phi)|^2\,dX =0.$$ 
This and \eqref{POLKAST-0} imply that
\begin{equation}\label{POLKAST}
\lim_{\eps\to0} \|\phi^+ -\underline\theta_\eps(\phi)\|_{\H^s(\B_1)}^2 =0.
\end{equation}
In a similar way (using~\eqref{UI-01-00-bis} and~\eqref{UI-01-01-bis}
instead of~\eqref{UI-01-00} and~\eqref{UI-01-01}), 
we obtain that
$$ \lim_{\eps\to0} \|\phi^+ -\overline\theta_\eps(\phi)\|_{\H^s(\B_1)}^2 =0.$$
This and~\eqref{POLKAST}
give \eqref{0iF-G} (up to renaming~$\eps$).
\end{proof}

As a consequence of Lemmata~\ref{AUX:0ugBIS}
and~\ref{5599} we have the following smooth
approximation result for the positive part:

\begin{corollary}\label{coro:smooth}
Let~$c\in\R$ and~$\phi\in\H^s(\B_1)$.
Let~$\phi_k\in\H^s(\B_1)$ be a sequence such that
$$ \lim_{k\to+\infty} \| \phi-\phi_k\|_{\H^s(\B_1)}=0.$$
Then, there exist sequences of functions~$\overline\theta_k$, $\underline\theta_k\in C^\infty(\R)$
such that~$\underline\theta_k(t)\le t^+\le\overline\theta_k(t)$ for any~$t\in\R$
and 
\begin{equation}\label{yu78JK}
\lim_{k\to+\infty} \| (\phi-c)^+ -\underline\theta_k(\phi_k-c)\|_{\H^s(\B_1)}=0 \end{equation}
and 
\begin{equation}\label{yu78JK-bis}
\lim_{k\to+\infty} \| (\phi-c)^+ -\overline\theta_k(\phi_k-c)\|_{\H^s(\B_1)}=0. \end{equation}
\end{corollary}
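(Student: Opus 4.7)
The plan is to combine Lemma~\ref{AUX:0ugBIS} (continuity of the positive-part operation in $\H^s(\B_1)$) with Lemma~\ref{5599} (smooth approximation of the positive part) via a triangle-inequality / diagonal argument. Since only existence of the approximating sequences is asked, I am free to pass to a subsequence whenever convenient.

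First, I would reduce to the case $c=0$ in spirit by working throughout with the shifted sequence $\phi_k - c$ and the limit $\phi - c$. Note that $\phi_k - c \to \phi - c$ in $\H^s(\B_1)$, since subtracting a constant does not affect gradients and $\|(\phi_k - c) - (\phi - c)\|_{\H^s(\B_1)} = \|\phi_k - \phi\|_{\H^s(\B_1)} \to 0$. Applying Lemma~\ref{AUX:0ugBIS} to these shifted sequences (or equivalently to $\phi$, $\phi_k$ with the same constant $c$) and passing to a subsequence, I may assume
$$\lim_{k\to+\infty}\|(\phi-c)^+ - (\phi_k-c)^+\|_{\H^s(\B_1)} = 0.$$

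Next, for each $k$, I would apply Lemma~\ref{5599} to the function $\phi_k - c \in \H^s(\B_1)$ with parameter $\eps_k := 1/k$. This produces functions $\underline\theta_k, \overline\theta_k \in C^\infty(\R)$ with $\underline\theta_k(t) \le t^+ \le \overline\theta_k(t)$ for every $t\in\R$, and satisfying
$$\|(\phi_k-c)^+ - \underline\theta_k(\phi_k-c)\|_{\H^s(\B_1)} + \|(\phi_k-c)^+ - \overline\theta_k(\phi_k-c)\|_{\H^s(\B_1)} \le \frac{1}{k}.$$

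Finally, the triangle inequality gives
$$\|(\phi-c)^+ - \underline\theta_k(\phi_k-c)\|_{\H^s(\B_1)} \le \|(\phi-c)^+ - (\phi_k-c)^+\|_{\H^s(\B_1)} + \frac{1}{k},$$
and analogously for $\overline\theta_k$. Both terms on the right-hand side vanish as $k\to+\infty$, yielding \eqref{yu78JK} and \eqref{yu78JK-bis}.

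There is no genuine obstacle: the only minor subtlety is that Lemma~\ref{AUX:0ugBIS} provides convergence of positive parts only along a subsequence, but the conclusion we want also only asserts the existence of sequences, so this is harmless after relabelling. Both pointwise inequalities $\underline\theta_k(t)\le t^+\le \overline\theta_k(t)$ are inherited directly from the construction in Lemma~\ref{5599}.
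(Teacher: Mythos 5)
Your argument is essentially identical to the paper's: apply Lemma~\ref{AUX:0ugBIS} to pass from $(\phi-c)^+$ to $(\phi_k-c)^+$, then apply Lemma~\ref{5599} for each fixed $k$ with an error going to zero (you use $1/k$, the paper uses $e^{-k}$; immaterial), and conclude by the triangle inequality. The one place you are slightly more careful than the paper is in flagging the ``up to a subsequence'' in Lemma~\ref{AUX:0ugBIS}; in fact the full sequence converges there by the usual sub-subsequence argument (any subsequence of $\phi_k$ still converges to $\phi$ in $\H^s$, so every subsequence of $(\phi_k-c)^+$ has a further subsequence converging to $(\phi-c)^+$, which forces the whole sequence to converge), so passing to a subsequence is not needed, but your remark that it would be harmless for the intended use is correct.
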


\begin{proof} First we use
Lemma~\ref{AUX:0ugBIS} to say that
$$ \lim_{k\to+\infty} \| (\phi-c)^+ - (\phi_k-c)^+\|_{\H^s(\B_1)}=0.$$
Now, fixed~$k\in\N$, we use Lemma~\ref{5599} to 
find~$\overline\theta_k$, $\underline\theta_k\in C^\infty(\R)$
such that~$\underline\theta_k(t)\le t^+\le\overline\theta_k(t)$ for any~$t\in\R$ and
$$ \| (\phi_k-c)^+ - \overline\theta_k(\phi_k-c)\|_{\H^s(\B_1)} 
+\| (\phi_k-c)^+ - \underline\theta_k(\phi_k-c)\|_{\H^s(\B_1)}\le e^{-k}.$$
These considerations and the triangle inequality imply~\eqref{yu78JK} and~\eqref{yu78JK-bis},
as desired.
\end{proof}

With this, we can now prove the following result: 

\begin{lemma}\label{AUX:0ug}
Let~$g$, $\varphi\in\H^s(\B_2)$ with~$g-\varphi\in\H^s_0(\B_1)$.
Let also~$c\ge\displaystyle\sup_{\B_1}\varphi$. Then~$(g-c)^+\in\H^s_0(\B_1)$.
\end{lemma}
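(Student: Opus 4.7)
The plan is to exhibit a sequence of $\H^s$ approximants to $(g-c)^+$ with compact support inside $\B_1$, by combining the one-sided smooth approximation of the positive part from Corollary~\ref{coro:smooth} with the explicit support properties of the building blocks constructed in Lemma~\ref{5599}. Since $g-\varphi\in\H^s_0(\B_1)$, I would first pick $w_k\in C^\infty_0(\B_1)$ with $w_k\to g-\varphi$ in $\H^s(\B_1)$, set $\phi_k:=w_k+\varphi$, and apply Corollary~\ref{coro:smooth} (to $\phi:=g$ and the constant $c$) to obtain smooth $\underline\theta_k\in C^\infty(\R)$ with $\underline\theta_k(t)\le t^+$ for all $t\in\R$ and
$$ \underline\theta_k(\phi_k-c)\longrightarrow (g-c)^+ \quad\text{in } \H^s(\B_1). $$

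The key observation is that, by the explicit construction in Lemma~\ref{5599}, each $\underline\theta_k$ may be taken of the form $\underline\theta_{\eps_k}$ with $\eps_k\downarrow 0$, satisfying the support property $\underline\theta_{\eps_k}(t)=0$ for every $t\le\eps_k/2$. Off $\operatorname{supp}(w_k)\Subset\B_1$ one has $w_k\equiv 0$, hence
$$ \phi_k-c=\varphi-c\le 0<\eps_k/2 \quad\text{on } \B_1, $$
where the hypothesis $c\ge\sup_{\B_1}\varphi$ is exactly what furnishes this inequality. Consequently the approximant $v_k:=\underline\theta_k(\phi_k-c)$ vanishes off $\operatorname{supp}(w_k)$ and is therefore compactly supported in $\B_1$; moreover $v_k\in\H^s(\B_1)$ by the chain rule for weighted spaces (e.g.\ Lemma~2.1 of~\cite{FKS}), since $\underline\theta_k$ is smooth with bounded derivative.

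To finish, I would invoke the standard fact that any function in $\H^s(\B_1)$ with compact support in $\B_1$ can be approximated in $\H^s(\B_1)$ by $C^\infty_0(\B_1)$ functions (via mollification, justified because $|z|^a$, with $a\in(-1,1)$, is a Muckenhoupt $A_2$ weight); hence each $v_k\in\H^s_0(\B_1)$, and closedness of $\H^s_0(\B_1)$ in $\H^s(\B_1)$ then yields $(g-c)^+\in\H^s_0(\B_1)$. The main subtlety, and the reason for working with $\underline\theta_k$ rather than $\overline\theta_k$, is that $\overline\theta_\eps$ vanishes only for $t\le-\eps$, so the weak inequality $\varphi\le c$ alone would not suffice to kill the approximant off $\operatorname{supp}(w_k)$, whereas $\underline\theta_\eps$ vanishes already for $t\le\eps/2$, converting $\varphi\le c$ into the strict vanishing needed to control the support.
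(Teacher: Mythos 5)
Your argument is correct, but it diverges from the paper's in one structural respect. The paper first approximates $\varphi$ by smooth $\varphi_k\in C^\infty(\B_1)$ and then replaces $\varphi_k$ by the smooth modification $\tilde\varphi_k:=\varphi_k-\overline\theta_k(\varphi_k-c)$, which satisfies $\tilde\varphi_k\le c$ pointwise; the final approximant $h_k:=\underline\theta_k(f_k+\tilde\varphi_k-c-e^{-k})$ is then a genuine element of $C^\infty_0(\B_1)$, so membership in $\H^s_0(\B_1)$ is immediate from the definition. You instead keep the non-smooth $\varphi$ inside the composition, setting $\phi_k:=w_k+\varphi$, and use the vanishing of $\underline\theta_\eps$ on $(-\infty,\eps/2]$ to make $v_k:=\underline\theta_k(\phi_k-c)$ vanish a.e.\ off $\operatorname{supp} w_k$. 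Your $v_k$ are therefore only compactly supported elements of $\H^s(\B_1)$, and you must invoke a separate fact: that a compactly supported $\H^s(\B_1)$ function lies in $\H^s_0(\B_1)$. That fact is indeed true, and given that $\H^s(\B_1)$ is by definition the closure of $C^\infty(\B_1)$, it follows from a plain cutoff argument (take $v_j\in C^\infty(\B_1)$ with $v_j\to v_k$ and multiply by a fixed $\eta\in C^\infty_0(\B_1)$ with $\eta\equiv1$ near $\operatorname{supp} w_k$; then $\eta v_j\in C^\infty_0(\B_1)$ and $\eta v_j\to\eta v_k=v_k$). The mollification and Muckenhoupt appeal you sketch is an alternative but heavier route to the same auxiliary fact. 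On balance: the paper's construction is more self-contained, manufacturing smooth compactly supported approximants outright via the $\tilde\varphi_k$ device and the $e^{-k}$ shift; yours trades that for a cleaner support argument by exploiting the one-sided vanishing of $\underline\theta_\eps$ directly. Note finally that both proofs rely on internal features of the construction in Lemma~\ref{5599} (nonnegativity of $\underline\theta_\eps$, vanishing on $(-\infty,\eps/2]$) that are not spelled out in the bare statement of Corollary~\ref{coro:smooth}; your explicit acknowledgement of this is appropriate, and the same caveat applies to the paper's argument.
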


\begin{proof}
By construction~$g-c\in\H^s(\B_1)$.
Thus, by Corollary~2.1 in~\cite{FKS}, we have that~$(g-c)^+\in
\H^s(\B_1)$. 
Moreover, there exist sequences~$f_k\in C^{\infty}_0(\B_1)$
and~$\varphi_k\in C^{\infty}(\B_1)$ such that~$f_k\to g-\varphi$
and~$\varphi_k\to\varphi$ in~$\H^s(\B_1)$ as~$k\to+\infty$,
respectively.

Now, we define~$\tilde\varphi_k:=\varphi_k-\overline\theta_k(\varphi_k-c)$, 
where $\overline\theta_k$ is the smooth function given by Corollary \ref{coro:smooth}. 
Notice that~$\tilde\varphi_k\in C^{\infty}(\B_1)$. Also,
by Corollary~\ref{coro:smooth}, we have that~$\overline\theta_k(\varphi_k-c)\to
(\varphi-c)^+=0$ in~$\H^s(\B_1)$, therefore~$\tilde\varphi_k\to
\varphi$ in~$\H^s(\B_1)$, as $k\to+\infty$.

Now we define~$h_k:= \underline\theta_k( f_k+\tilde\varphi_k-c-e^{-k})$, 
where $\underline\theta_k$ is given by Corollary \ref{coro:smooth}.
Notice that~$h_k\in C^{\infty}(\B_1)$.
Also, the support of~$h_k$ is compactly contained inside~$\B_1$,
since~$\tilde\varphi_k\le \varphi_k-(\varphi_k-c)^+=\min\{\varphi_k,c\}\le c$ 
(recall that $\overline\theta_k(t)\ge t^+$ for any $t\in\R$)
and the support of~$f_k$ is compactly contained inside~$\B_1$.
Therefore, we have that~$h_k\in C^{\infty}_0(\B_1)$.
Also, by Corollary~\ref{coro:smooth}, we have that~$h_k\to
( (g-\varphi)+\varphi-c)^+ = (g-c)^+$
in~$\H^s(\B_1)$.
This implies that~$(g-c)^+\in\H^s_0(\B_1)$.
\end{proof}

For further reference, we point out that a statement analogous to
Lemma~\ref{AUX:0ug} holds when the positive part is replaced
with the negative part of the functions:

\begin{lemma}\label{AUX:0ug:NEG}
Let~$g$, $\varphi\in\H^s(\B_2)$ with~$g-\varphi\in\H^s_0(\B_1)$.
Let also~$c\le\displaystyle\inf_{\B_1}\varphi$. Then~$(g-c)^-\in\H^s_0(\B_1)$.
\end{lemma}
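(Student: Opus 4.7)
The cleanest route is to reduce Lemma \ref{AUX:0ug:NEG} to Lemma \ref{AUX:0ug} by the symmetry $\phi\mapsto -\phi$, using the identity $(g-c)^- = \bigl(-(g-c)\bigr)^+$. The plan is to set
$$ \tilde g := -g,\qquad \tilde\varphi := -\varphi,\qquad \tilde c := -c,$$
and to check that the hypotheses of Lemma \ref{AUX:0ug} are satisfied with these choices. First, since $\H^s(\B_2)$ is a vector space, we have $\tilde g,\tilde\varphi\in\H^s(\B_2)$; similarly, since $\H^s_0(\B_1)$ is a vector space (being the closure of $C^\infty_0(\B_1)$ in a seminormed topology), the relation $\tilde g-\tilde\varphi = -(g-\varphi)\in \H^s_0(\B_1)$ follows at once. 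The ordering hypothesis is the step that uses the assumption on~$c$: by definition,
$$ \tilde c \;=\; -c \;\geq\; -\inf_{\B_1}\varphi \;=\; \sup_{\B_1}(-\varphi) \;=\; \sup_{\B_1}\tilde\varphi. $$

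With these verifications in place, Lemma~\ref{AUX:0ug} applied to the triple $(\tilde g,\tilde\varphi,\tilde c)$ yields $(\tilde g-\tilde c)^+\in\H^s_0(\B_1)$. The conclusion then follows from the elementary identity
$$ (\tilde g-\tilde c)^+ \;=\; \bigl(-(g-c)\bigr)^+ \;=\; (g-c)^-, $$
valid pointwise.

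I do not foresee a genuine obstacle here: the argument is a bookkeeping reduction, and the only nontrivial input (the compactly supported smooth approximation of the truncated functions) has already been handled in the proof of Lemma~\ref{AUX:0ug} via Corollary~\ref{coro:smooth}. One could alternatively repeat the proof of Lemma~\ref{AUX:0ug} verbatim, swapping the roles of $\overline\theta_k$ and $\underline\theta_k$ (and using $t^-=(-t)^+$ to transfer Corollary~\ref{coro:smooth} to the negative-part setting), but the reflection argument above is shorter and avoids any duplication of the technical approximation step.
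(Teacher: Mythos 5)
Your proof is correct. The paper actually gives no explicit argument for Lemma~\ref{AUX:0ug:NEG}; it merely remarks that the statement is ``analogous'' to Lemma~\ref{AUX:0ug}, leaving the reader to either repeat the approximation construction with $\underline\theta_k$ and $\overline\theta_k$ swapped or to perform exactly the reflection you describe. Your reduction via $\tilde g:=-g$, $\tilde\varphi:=-\varphi$, $\tilde c:=-c$ is clean: the vector-space stability of $\H^s(\B_2)$ and $\H^s_0(\B_1)$ under negation is immediate, the inequality $\tilde c\ge\sup_{\B_1}\tilde\varphi$ follows from $c\le\inf_{\B_1}\varphi$, and the pointwise identity $(\tilde g-\tilde c)^+=\big(-(g-c)\big)^+=(g-c)^-$ (with the convention $t^-=\max\{-t,0\}$, which is the one the paper uses, e.g.\ in the proof of Lemma~\ref{MPL-2}) transfers the conclusion. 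This is the natural way to make the paper's ``analogous'' assertion precise, and it avoids duplicating the technical approximation step of Lemma~\ref{5599} and Corollary~\ref{coro:smooth}.
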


Now we establish pointwise bounds, from above and below,
of the fractional harmonic replacement:

\begin{lemma}\label{MPL-1}
We have that
$$ \Phi^\varphi_{K,\gamma} \le \max\left\{
\sup_{\B_1} \varphi,\;\sup_{K}\gamma\right\}.$$
\end{lemma}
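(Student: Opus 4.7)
The plan is to use the standard truncation technique combined with the uniqueness from Theorem~\ref{MIN-0}. Set
$$ c := \max\Big\{\sup_{\B_1}\varphi,\ \sup_{K}\gamma\Big\}, $$
which we may assume to be finite (otherwise there is nothing to prove), and define the truncation
$$ \tilde{\Phi} := \min\{\Phi^\varphi_{K,\gamma},\,c\} = \Phi^\varphi_{K,\gamma} - w, \qquad w := (\Phi^\varphi_{K,\gamma}-c)^+. $$
The goal will be to show that $\tilde\Phi$ lies in the admissible class $\D^\varphi_{K,\gamma}$ and has energy no larger than that of $\Phi^\varphi_{K,\gamma}$, so that the uniqueness statement forces $\tilde\Phi=\Phi^\varphi_{K,\gamma}$, which is precisely the desired inequality.

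First I would check admissibility of $\tilde\Phi$. Writing $\tilde\Phi-\varphi = (\Phi^\varphi_{K,\gamma}-\varphi) - w$, the first summand lies in $\H^s_0(\B_1)$ by definition of $\D^\varphi$. For the second summand, since $c\ge \sup_{\B_1}\varphi$, Lemma~\ref{AUX:0ug} (applied to $g=\Phi^\varphi_{K,\gamma}$, extended by $\varphi$ outside $\B_1$ so that $g\in\H^s(\B_2)$) gives $w\in\H^s_0(\B_1)$. Hence $\tilde\Phi\in\D^\varphi$. On the constraint set, since $\Phi^\varphi_{K,\gamma}=\gamma$ a.e.\ on $K$ and $\gamma\le \sup_K\gamma\le c$, we get $\tilde\Phi=\min\{\gamma,c\}=\gamma$ a.e.\ on $K$. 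Thus $\tilde\Phi\in\D^\varphi_{K,\gamma}$.

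Next I would compare energies. By Corollary~2.1 in~\cite{FKS} (already invoked in the paper), we have the chain rule $\nabla\tilde\Phi = \nabla\Phi^\varphi_{K,\gamma}\cdot\chi_{\{\Phi^\varphi_{K,\gamma}\le c\}}$ a.e., and therefore
$$ \E(\tilde\Phi) = \int_{\B_1\cap\{\Phi^\varphi_{K,\gamma}\le c\}} |z|^a |\nabla\Phi^\varphi_{K,\gamma}|^2\,dX \le \E(\Phi^\varphi_{K,\gamma}). $$
Combined with the minimality of $\Phi^\varphi_{K,\gamma}$ in $\D^\varphi_{K,\gamma}$, this yields $\E(\tilde\Phi)=\E(\Phi^\varphi_{K,\gamma})$, so $\tilde\Phi$ is also a minimizer. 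The uniqueness part of Theorem~\ref{MIN-0} then forces $\tilde\Phi=\Phi^\varphi_{K,\gamma}$ a.e., i.e., $\Phi^\varphi_{K,\gamma}\le c$ a.e.\ in $\B_1$, which is the claim.

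The main obstacle is the first step: checking that $w=(\Phi^\varphi_{K,\gamma}-c)^+$ actually belongs to $\H^s_0(\B_1)$, since $\Phi^\varphi_{K,\gamma}$ need not be smooth and we cannot truncate naively inside $C^\infty_0$. This is exactly why the technical Lemmata~\ref{AUX:0ugBIS}, \ref{5599} and Corollary~\ref{coro:smooth} were developed, culminating in Lemma~\ref{AUX:0ug}; once those are in hand, every other step is a routine minimality/uniqueness argument.
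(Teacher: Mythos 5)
Your argument is correct and follows essentially the same route as the paper: both proofs truncate $\Phi^\varphi_{K,\gamma}$ at the level $c$, invoke Lemma~\ref{AUX:0ug} to place the truncated difference in $\H^s_0(\B_1)$, verify that $(\Phi^\varphi_{K,\gamma}-c)^+$ vanishes on $K$, and then exploit the variational structure of the problem. The only cosmetic difference is the finish: the paper plugs $\psi=(\Phi^\varphi_{K,\gamma}-c)^+$ into the orthogonality relation \eqref{0PouyghjYU-1} and uses Lemma~\ref{CHIA67877:L}, whereas you show the truncation is an admissible competitor with no larger energy and invoke uniqueness from Theorem~\ref{MIN-0}; these are two equivalent ways to state the same fact.
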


\begin{proof} Let
$$c:=
\max\left\{
\sup_{\B_1} \varphi,\;\sup_{K}\gamma\right\}$$
and~$\psi:=(\Phi^\varphi_{K,\gamma} -c)^+$.
By Lemma~\ref{AUX:0ug}, we know that~$\psi\in\H^s_0(\B_1)$.
Also, a.e. in~$K$,
$$ \psi=(\Phi^\varphi_{K,\gamma} -c)^+=(\gamma-c)^+=0$$
in the sense of traces, hence~$\psi\in
\D^0_K$. As a consequence, using~\eqref{0PouyghjYU-1},
$$ 0=\int_{\B_1}|z|^a \nabla\Phi^\varphi_{K,\gamma}\cdot\nabla\psi\,dX=
\int_{\B_1\cap\{ \Phi^\varphi_{K,\gamma}>c\}}|z|^a
|\nabla\Phi^\varphi_{K,\gamma}|^2\,dX,$$
which gives the desired result.
\end{proof}

\begin{lemma}\label{MPL-2}
If~$\varphi\ge0$ and~$\gamma\ge0$, then~$\Phi^\varphi_{K,\gamma}\ge0$.
\end{lemma}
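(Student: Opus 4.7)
The plan is to mirror the proof of Lemma~\ref{MPL-1}, but using the negative part in place of the positive part. Set $\Phi := \Phi^\varphi_{K,\gamma}$ and test against $\psi := \Phi^- = (\Phi - 0)^-$.

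First I would verify that $\psi \in \D^0_K$. Since $\varphi \ge 0$, we have $0 \le \inf_{\B_1}\varphi$, so Lemma~\ref{AUX:0ug:NEG} (applied with $g = \Phi$ and $c = 0$, using that $\Phi - \varphi \in \H^s_0(\B_1)$ by virtue of $\Phi \in \D^\varphi$) gives $\psi \in \H^s_0(\B_1)$. Moreover, the trace of $\Phi$ on $K$ equals $\gamma \ge 0$, so the trace of $\psi$ vanishes a.e.\ on $K$; this shows $\psi \in \D^0_K$.

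Next I would apply the orthogonality relation \eqref{0PouyghjYU-1} with this choice of $\psi$. Using $\nabla \Phi^- = -\chi_{\{\Phi < 0\}} \nabla \Phi$ (valid by Corollary~2.1 in~\cite{FKS} together with Stampacchia's theorem, which guarantees $\nabla \Phi = 0$ a.e.\ on $\{\Phi = 0\}$), one computes
$$ 0 = \int_{\B_1} |z|^a \nabla \Phi \cdot \nabla \psi \, dX = -\int_{\B_1 \cap \{\Phi < 0\}} |z|^a |\nabla \Phi|^2 \, dX = -[\psi]_{\H^s(\B_1)}^2. $$
Hence $[\psi]_{\H^s(\B_1)} = 0$, and Lemma~\ref{CHIA67877:L} forces $\psi = 0$ a.e.\ in $\B_1$, i.e.\ $\Phi \ge 0$ a.e., as desired.

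No step here is genuinely hard: the only mild subtlety is the membership $\psi \in \H^s_0(\B_1)$, but this is precisely the content of Lemma~\ref{AUX:0ug:NEG}, which was set up for exactly this application. The verification that the trace of $\psi$ vanishes on $K$ is immediate from $\gamma \ge 0$, and the rest is the same convexity/orthogonality computation already used for the upper bound.
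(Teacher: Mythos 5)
Your proof is correct and follows essentially the same route as the paper: test the orthogonality relation \eqref{0PouyghjYU-1} against $\psi = (\Phi^\varphi_{K,\gamma})^-$, verify $\psi\in\D^0_K$ via Lemma~\ref{AUX:0ug:NEG} and the trace condition $\gamma\ge0$, and conclude from the vanishing of the weighted energy on $\{\Phi<0\}$. The only difference is that you explicitly invoke Lemma~\ref{CHIA67877:L} to pass from $[\psi]_{\H^s(\B_1)}=0$ to $\psi=0$ a.e., whereas the paper leaves this last step implicit.
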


\begin{proof} Let~$\psi:=(-\Phi^\varphi_{K,\gamma})^+ =
(\Phi^\varphi_{K,\gamma})^-$.
By Corollary~2.1 in~\cite{FKS} we have that~$\psi\in \H^s(\B_1)$,
and, using Lemma~\ref{AUX:0ug:NEG} with~$c:=0$, we have that~$\psi\in
\H^s_0(\B_1)$.
Also, a.e. in~$K$, we have that~$\psi=(\Phi^\varphi_{K,\gamma})^-=
(\gamma)^-=0$ in the trace sense.
As a consequence, $\psi\in\D^0_K$,
thus we can use~\eqref{0PouyghjYU-1} and conclude that
$$ 0=
\int_{\B_1}|z|^a \nabla\Phi^\varphi_{K,\gamma}\cdot\nabla\psi\,dX=
-\int_{\B_1\cap\{ \Phi^\varphi_{K,\gamma}<0\}}|z|^a |\nabla\Phi^\varphi_{K,\gamma}|^2\,dX,$$
which gives the desired result.
\end{proof}

\subsection{Relaxation of the functional spaces
and subharmonicity properties}

The purpose of this subsection is to relax the functional
prescription in the space~$\D^\varphi_K$ by allowing approximating sequences
to take also negative values in~$K$. This observation
will be exploited to deduce subharmonicity
properties of~$\Phi^\varphi_K$ and it will also play a role
in the proof of the monotonicity statement of Theorem~\ref{MONOTONE}.
For this scope,
we define
\begin{equation}\label{9hjkgGGHJ}
\tilde\D^\varphi_K := \big\{
v\in \D^\varphi
{\mbox{ s.t. }}
v\le 0 {\mbox{ a.e. in }}K\big\}.\end{equation}
The reader may compare this definition with~\eqref{DE-ha}:
the only difference is that in~\eqref{DE-ha}
the function is forced to vanish on~$K$,
while in the latter setting it 
can also attain negative values on~$K$.
Of course, $\tilde\D^\varphi_K \supseteq \D^\varphi_K$,
therefore
$$ \inf_{ v\in\tilde\D^\varphi_K }\E(v)\le
\min_{v\in \D^\varphi_K }\E(v)=\E(\Phi^\varphi_K).$$
We will show that in fact equality holds if~$\varphi\ge0$:

\begin{lemma}\label{0iGHBCcvL}
If~$\varphi\ge0$, then
$$ \min_{ v\in\tilde\D^\varphi_K }\E(v)=
\min_{v\in \D^\varphi_K }\E(v)=\E(\Phi^\varphi_K).$$
\end{lemma}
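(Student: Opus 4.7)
The inequality $\displaystyle\inf_{v\in\tilde\D^\varphi_K}\E(v)\le\min_{v\in\D^\varphi_K}\E(v)$ is immediate from the inclusion $\D^\varphi_K\subseteq\tilde\D^\varphi_K$, and the equality with $\E(\Phi^\varphi_K)$ is given by Theorem~\ref{MIN-0}. So the plan is to show the reverse inequality: given an arbitrary $v\in\tilde\D^\varphi_K$, I would produce a competitor $v^\star\in\D^\varphi_K$ with $\E(v^\star)\le\E(v)$, which will close the loop.

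The natural choice is $v^\star:=v^+$. First I would verify that $v^+\in\D^\varphi_K$. The trace condition on $K$ is straightforward: since $v\le 0$ a.e.\ in $K$ (in the sense of traces), we have $v^+=0$ a.e.\ in $K$. The delicate point is checking that $v^+-\varphi\in\H^s_0(\B_1)$. The trick is to write
$$ v^+-\varphi = (v-\varphi) + v^-, $$
and observe that $v-\varphi\in\H^s_0(\B_1)$ by hypothesis, while $v^-\in\H^s_0(\B_1)$ follows from Lemma~\ref{AUX:0ug:NEG} applied with $g:=v$ and $c:=0$, which is a legitimate choice precisely because the assumption $\varphi\ge0$ gives $0\le\inf_{\B_1}\varphi$. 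Summing the two gives $v^+-\varphi\in\H^s_0(\B_1)$, so $v^+\in\D^\varphi$, and combined with $v^+=0$ on $K$ we get $v^+\in\D^\varphi_K$.

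Next I would bound the energy. By Corollary~2.1 in~\cite{FKS} (already used repeatedly above), we have $\nabla v^+=\chi_{\{v>0\}}\nabla v$ a.e., so $|\nabla v^+|^2\le|\nabla v|^2$ a.e.\ in $\B_1$. Integrating against the weight $|z|^a$ gives $\E(v^+)\le\E(v)$. Therefore
$$ \min_{w\in\D^\varphi_K}\E(w) \;\le\; \E(v^+)\;\le\;\E(v), $$
and taking the infimum over $v\in\tilde\D^\varphi_K$ yields the missing inequality.

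The only genuinely nontrivial step is the verification that $v^+$ belongs to $\D^\varphi_K$, specifically the claim $v^+-\varphi\in\H^s_0(\B_1)$. This is where the hypothesis $\varphi\ge0$ is used crucially, through the preparatory Lemma~\ref{AUX:0ug:NEG}; without this sign condition one would have no reason to expect $v^-$ to have zero trace at the lateral boundary of $\B_1$. Everything else is bookkeeping on positive/negative parts.
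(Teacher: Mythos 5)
Your proof is correct and it follows the same overall strategy as the paper: take an arbitrary $v\in\tilde\D^\varphi_K$, pass to $v^+$, observe $\E(v^+)\le\E(v)$, and reduce to showing $v^+\in\D^\varphi_K$. The two proofs diverge precisely at the key step of showing $v^+-\varphi\in\H^s_0(\B_1)$. You handle it via the algebraic identity $v^+-\varphi=(v-\varphi)+v^-$ and then invoke Lemma~\ref{AUX:0ug:NEG} (with $g:=v$, $c:=0$) to conclude $v^-\in\H^s_0(\B_1)$; this closes the argument in one line. The paper instead builds an explicit compactly supported smooth approximating sequence for $v^+-\varphi$ from scratch: starting from $f_k\to v-\varphi$ and $\varphi_k\to\varphi$, it forms $g_k:=f_k+\overline\theta_k(\varphi_k)\to v$, then $\overline\theta_k(g_k)\to v^+$, and then $h_k:=\overline\theta_k(g_k)-\overline\theta_k\bigl(\overline\theta_k(\varphi_k)\bigr)$, checking that $h_k$ is compactly supported because $g_k$ agrees with $\overline\theta_k(\varphi_k)$ outside the support of $f_k$. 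Your route is shorter, more transparent, and is exactly the same maneuver the paper itself already employs in Lemma~\ref{MPL-2} (where $(\Phi^\varphi_{K,\gamma})^-$ is treated via Lemma~\ref{AUX:0ug:NEG} with $c=0$). The only small point worth flagging: Lemma~\ref{AUX:0ug:NEG} is stated with the hypothesis $g\in\H^s(\B_2)$, whereas an arbitrary $v\in\tilde\D^\varphi_K$ is a priori only in $\H^s(\B_1)$. This is harmless (extend $v-\varphi\in\H^s_0(\B_1)$ by zero and use $\varphi\in\H^s(\B_2)$ to get $v\in\H^s(\B_2)$, or observe that the proof of Lemma~\ref{AUX:0ug} only ever uses $g\in\H^s(\B_1)$), and indeed the paper applies Lemmata~\ref{AUX:0ug} and~\ref{AUX:0ug:NEG} in exactly this looser way in Lemmata~\ref{MPL-1},~\ref{MPL-2} and Theorem~\ref{MONOTONE}, but it would be worth a sentence for full rigor.
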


\begin{proof} Let~$v\in \tilde\D^\varphi_K$. Since~$|\nabla v^+|\le
|\nabla v|$, we have that~$\E(v^+)\le\E(v)$.
So, to prove the desired result, we only have to show that
\begin{equation}\label{0iGHBCcv}
v^+\in\D^\varphi_K.
\end{equation}
For this, we note that~$v^+\in \H^s(\B_1)$,
thanks to Corollary~2.1 in~\cite{FKS}.
Now we claim that
\begin{equation}\label{89tTyH}
v^+ -\varphi\in\H^s_0(\B_1).
\end{equation}
For this, we use the sequences~$f_k\in C^{\infty}_0(\B_1)$
and~$\varphi_k\in C^{\infty}(\B_1)$ that converge, respectively,
to~$v-\varphi$ and~$\varphi$ in~$\H^s(\B_1)$, as~$k\to+\infty$.

We define~$g_k:=f_k +\overline\theta_k(\varphi_k)$, where $\overline\theta_k$ 
is given by Corollary~\ref{coro:smooth}.
Hence, by Corollary~\ref{coro:smooth}, we know that~$\overline\theta_k(\varphi_k)\to
\varphi^+=\varphi$ in~$\H^s(\B_1)$.
Therefore~$g_k\to (v-\varphi) +\varphi=v$
in~$\H^s(\B_1)$.

As a consequence, using again Corollary~\ref{coro:smooth}, 
we obtain that~$\overline\theta_k(g_k)\to v^+$ in~$\H^s(\B_1)$.

Let now~$h_k:= \overline\theta_k(g_k) -\overline\theta_k\big( 
\overline\theta_k(\varphi_k)\big)$. We have that~$h_k\to
v^+ -\varphi$. We also notice that~$f_k=0$
outside a compact subset~${\mathcal{K}}_k$ contained inside~$\B_1$.
Hence~$g_k=\overline\theta_k(\varphi_k)$ outside~${\mathcal{K}}_k$.
Therefore~$h_k=\overline\theta_k( g_k) - \overline\theta_k\big(\overline\theta_k(\varphi_k)\big) = 
\overline\theta_k\big(\overline\theta_k(\varphi_k)\big)-\overline\theta_k\big(
\overline\theta_k(\varphi_k)\big)=0$
outside~${\mathcal{K}}_k$. This shows that~$h_k\in C^{\infty}_0(\B_1)$
and it completes the proof of~\eqref{89tTyH}.

Now we observe that~$v^+=0$ a.e. in~$K$ in the trace sense.
This 
and~\eqref{89tTyH}
complete the proof of~\eqref{0iGHBCcv} and so of Lemma~\ref{0iGHBCcvL}.
\end{proof}

While Lemma~\ref{HARM:0p}
gives that the harmonic replacement is ``harmonic'' apart from~$K$,
next result states that it is ``subharmonic'' in the whole of the
domain if the boundary datum is nonnegative:

\begin{lemma}\label{sofar}
If~$\varphi\ge0$, then for every~$\psi\in\H^s_0(\B_1)$ with~$\psi\ge0$
a.e. in~$\B_1$, we have that
$$ \int_{\B_1} |z|^a \nabla \Phi^\varphi_K\cdot\nabla\psi\,dX\le0.$$
\end{lemma}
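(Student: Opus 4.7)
The plan is to exploit the relaxed minimization statement in Lemma~\ref{0iGHBCcvL} by constructing a one-parameter family of competitors in~$\tilde\D^\varphi_K$ and differentiating the resulting energy inequality at the parameter value zero.

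Concretely, given~$\psi\in\H^s_0(\B_1)$ with~$\psi\ge0$ a.e. in~$\B_1$, I would consider, for each~$t>0$, the function
$$ v_t := \Phi^\varphi_K - t\psi. $$
First I would check that~$v_t\in\tilde\D^\varphi_K$ according to the definition in~\eqref{9hjkgGGHJ}. Since~$\Phi^\varphi_K-\varphi\in\H^s_0(\B_1)$ by construction and~$t\psi\in\H^s_0(\B_1)$ by hypothesis, the difference~$v_t-\varphi=(\Phi^\varphi_K-\varphi)-t\psi$ also lies in~$\H^s_0(\B_1)$, so~$v_t\in\D^\varphi$. Moreover, using that~$\Phi^\varphi_K=0$ a.e. in~$K$ (in the trace sense), one has~$v_t=-t\psi\le0$ a.e. in~$K$, again in the trace sense, since~$\psi\ge0$ and~$t>0$. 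Hence~$v_t\in\tilde\D^\varphi_K$.

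Next, by Lemma~\ref{0iGHBCcvL} (which applies because~$\varphi\ge0$), the minimum of~$\E$ over~$\tilde\D^\varphi_K$ coincides with~$\E(\Phi^\varphi_K)$, and thus
$$ \E(v_t) \ge \E(\Phi^\varphi_K). $$
Expanding the left-hand side yields
$$ \E(\Phi^\varphi_K) - 2t\int_{\B_1}|z|^a \nabla\Phi^\varphi_K\cdot\nabla\psi\,dX + t^2\,\E(\psi)\ge \E(\Phi^\varphi_K), $$
which, after dividing by~$t>0$, becomes
$$ -2\int_{\B_1}|z|^a \nabla\Phi^\varphi_K\cdot\nabla\psi\,dX + t\,\E(\psi)\ge 0. $$
Letting~$t\to0^+$ then gives exactly the desired inequality. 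The argument is essentially a one-sided first variation; the only nontrivial point, and therefore the one I would double-check carefully, is the inclusion~$v_t\in\tilde\D^\varphi_K$, where the relaxation from~$\D^\varphi_K$ to~$\tilde\D^\varphi_K$ is precisely what is needed to accommodate the perturbation by a nonnegative test function without violating the sign constraint on~$K$, and where Lemma~\ref{0iGHBCcvL} is the key ingredient that makes the two minimum values coincide.
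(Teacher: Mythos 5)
Your proof is correct and follows essentially the same approach as the paper: both construct the competitor $\Phi^\varphi_K - t\psi$ (the paper calls it $\psi_\eps$), verify membership in $\tilde\D^\varphi_K$, invoke Lemma~\ref{0iGHBCcvL} for the energy inequality, and then take the first variation as the parameter tends to zero.
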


\begin{proof}
Given~$\eps>0$, we set~$\psi_\eps:=\Phi^\varphi_K-\eps\psi$.
Since~$\Phi^\varphi_K-\varphi\in\H^s_0(\B_1)$
and~$\psi\in\H^s_0(\B_1)$, we have that $\psi_\eps-\varphi\in\H^s_0(\B_1)$.
Furthermore, a.e. in~$K$, we have that~$\psi_\eps=-\eps\psi\le0$
in the trace sense, therefore~$\psi_\eps
\in\tilde\D^\varphi_K $.

{F}rom this
and Lemma~\ref{0iGHBCcvL},
it follows that~$\E(\psi_\eps)-\E(\Phi^\varphi_K)\ge0$
and this gives the desired result.
\end{proof}

For our purposes we will never use Lemma \ref{sofar}, 
but we stated and proved it since it can be a useful consequence 
of the theory developed so far in Section \ref{FHR}. 

\subsection{A monotonicity property for the fractional harmonic replacement}

Now we show that the fractional harmonic replacement
enjoys a monotonicity property with respect to
its boundary data and the constrain:

\begin{theorem}\label{MONOTONE}
Let~$ \H^s(\B_1)\ni\varphi_2\ge\varphi_1\ge0$.
Let also~$K_2\subseteq K_1\subseteq B_{\frac{9}{10}}$
and~$A_1 \subseteq A_2\Subset B_{\frac{9}{10}}$.
Then
$$ \E(\Phi^{\varphi_1}_{ K_1\cup A_1})
-\E(\Phi^{\varphi_1}_{K_1}) \le
\E(\Phi^{\varphi_2}_{K_2\cup A_2})
-\E(\Phi^{\varphi_2}_{ K_2}).$$
\end{theorem}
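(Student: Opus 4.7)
The plan is to apply a four-function rearrangement, comparing the minimizers of all four problems simultaneously. Set $g_i:=\Phi^{\varphi_i}_{K_i}$ and $f_i:=\Phi^{\varphi_i}_{K_i\cup A_i}$ for $i=1,2$, all nonnegative by Lemma~\ref{MPL-2}. The target inequality is equivalent to
\begin{equation*}
\E(f_1)+\E(g_2)\le \E(f_2)+\E(g_1).
\end{equation*}

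To produce this, I will build the ``crossed'' competitors $u:=\max(f_2,g_1)$ and $v:=\min(f_2,g_1)$. The pointwise Stampacchia identity $|\nabla\max(f_2,g_1)|^2+|\nabla\min(f_2,g_1)|^2=|\nabla f_2|^2+|\nabla g_1|^2$ survives multiplication by the weight $|z|^a$, so
\begin{equation*}
\E(u)+\E(v)=\E(f_2)+\E(g_1).
\end{equation*}
If I can show $u\in\D^{\varphi_2}_{K_2}$ and $v\in\D^{\varphi_1}_{K_1\cup A_1}$, then minimality of $g_2$ and $f_1$ in the respective spaces gives $\E(g_2)\le\E(u)$ and $\E(f_1)\le\E(v)$, and summing with the identity above delivers the theorem.

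The trace conditions at $\{z=0\}$ are the easy part of this verification: on $K_2\subseteq K_1$ both $f_2$ and $g_1$ vanish, so $T_u=0$ there; on $K_1$ one has $g_1=0$ while $f_2\ge 0$, and on $A_1\subseteq A_2$ one has $f_2=0$ while $g_1\ge 0$, so $T_v=0$ on $K_1\cup A_1$. The step I expect to be the main obstacle is the rigorous check of the Dirichlet conditions $u-\varphi_2\in\H^s_0(\B_1)$ and $v-\varphi_1\in\H^s_0(\B_1)$, where the two different boundary data have to be reconciled in the weighted space. The plan here is to use the decompositions
\begin{equation*}
u-\varphi_2=(f_2-\varphi_2)+(g_1-f_2)^+,\qquad v-\varphi_1=(g_1-\varphi_1)-(g_1-f_2)^+,
\end{equation*}
reducing matters to showing $(g_1-f_2)^+\in\H^s_0(\B_1)$. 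For this I will invoke Lemma~\ref{AUX:0ug} with $g:=g_1-f_2$, $\varphi:=\varphi_1-\varphi_2$ and threshold $c=0$: the requirement $g-\varphi=(g_1-\varphi_1)-(f_2-\varphi_2)\in\H^s_0(\B_1)$ is immediate from $g_i,f_i\in\D^{\varphi_i}$, and $c=0\ge\sup_{\B_1}(\varphi_1-\varphi_2)$ since $\varphi_1\le\varphi_2$. Granted this, the chain $\E(g_2)+\E(f_1)\le\E(u)+\E(v)=\E(f_2)+\E(g_1)$ is the desired conclusion.
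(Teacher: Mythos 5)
Your proposal is correct, and it takes a genuinely different route from the paper. The paper introduces the auxiliary replacement $\Phi^{\varphi_2}_{K_1,\,\Phi^{\varphi_2}_{K_2\cup A_2}}$ with a nonzero prescribed datum on the constraint set, establishes the pointwise comparison $\Phi^{\varphi_1}_{K_1}\le \Phi^{\varphi_2}_{K_1,\,\Phi^{\varphi_2}_{K_2\cup A_2}}$, and then invokes the Pythagorean orthogonality identity~\eqref{0PouyghjYU-2} (twice) together with the relaxed admissible class $\tilde\D^{\varphi_1}_{K_1\cup A_1}$ from Lemma~\ref{0iGHBCcvL} to transfer the energy of the intermediate defect $\eta$ across the two scales. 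Your four-function argument replaces all of this machinery with the submodularity of the Dirichlet energy: you form $u=\max(f_2,g_1)$ and $v=\min(f_2,g_1)$ and exploit the a.e.\ identity $|\nabla u|^2+|\nabla v|^2=|\nabla f_2|^2+|\nabla g_1|^2$, which survives the $|z|^a$ weight, reducing the theorem to the single verification that $u$ and $v$ land in $\D^{\varphi_2}_{K_2}$ and $\D^{\varphi_1}_{K_1\cup A_1}$ respectively. Your use of the decompositions $\max(f_2,g_1)=f_2+(g_1-f_2)^+$ and $\min(f_2,g_1)=g_1-(g_1-f_2)^+$ funnels the whole boundary-matching question into the single claim $(g_1-f_2)^+\in\H^s_0(\B_1)$, which is exactly what Lemma~\ref{AUX:0ug} (with $g:=g_1-f_2$, $\varphi:=\varphi_1-\varphi_2$, $c:=0$) was designed to deliver; this is arguably a more transparent use of that lemma than the paper's, which deploys it inside the comparison step~\eqref{0OKfg67890}. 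The trace verifications rely on the trace commuting with positive parts and on $T_{f_2},T_{g_1}\ge 0$ (from Lemma~\ref{MPL-2}), which is the same implicit convention the paper uses in Lemmata~\ref{MPL-1} and~\ref{MPL-2}. Net effect: your argument avoids the relaxed class $\tilde\D$, avoids the nonzero-datum replacement, and avoids Lemma~\ref{0iGHBCcvL} entirely, at the cost of a single extra application of the FKS-type lattice structure; it is shorter and, to my reading, cleaner.

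Two small presentational cautions. First, although the theorem statement writes $\varphi_i\in\H^s(\B_1)$, the definition of $\D^\varphi$ and the hypotheses of Lemma~\ref{AUX:0ug} formally require $\H^s(\B_2)$ data; you should note (as the paper implicitly does) that $g_1$ and $f_2$ extend by $\varphi_1$ and $\varphi_2$ respectively outside $\B_1$, so $g_1-f_2\in\H^s(\B_2)$ and Lemma~\ref{AUX:0ug} applies. Second, the target inequality as you rearranged it, $\E(f_1)+\E(g_2)\le\E(f_2)+\E(g_1)$, is indeed equivalent to the statement of the theorem, but it is worth spelling out the rearrangement in a written-up version so the reader does not have to track signs.
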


\begin{proof} We consider the minimization
problem in~$\D^{\varphi_2}_{K_1,\,\Phi^{\varphi_2}_{K_2\cup A_2}}$.
In the notation of Theorem~\ref{MIN-0},
the associated minimizer will be denoted
by~$\Phi^{\varphi_2}_{K_1,\,\Phi^{\varphi_2}_{K_2\cup A_2}}$.

We claim that
\begin{equation}\label{0OKfg67890}
\Phi^{\varphi_1}_{K_1}\le \Phi^{\varphi_2}_{K_1,\,\Phi^{\varphi_2}_{K_2\cup A_2}}.
\end{equation}
To prove this, we let~$g:= \Phi^{\varphi_1}_{K_1}-
\Phi^{\varphi_2}_{K_1,\,\Phi^{\varphi_2}_{K_2\cup A_2}}$
and~$\varphi:= \varphi_1-\varphi_2$. Notice that~$\sup_{\B_1}\varphi\le0$,
thus we can use Lemma~\ref{AUX:0ug}
(with~$c:=0$)
and conclude that~$h:= g^+\in \H^s_0(\B_1)$.
Furthermore, in the trace sense, a.e. in~$K_1$ we have that~$h=
(0-\Phi^{\varphi_2}_{K_1,\,\Phi^{\varphi_2}_{K_2\cup A_2}})^+ \le0$,\
thanks to Lemma~\ref{MPL-2}, and so
\begin{equation}\label{chiamalo}
h\in \D^{0}_{K_1}
\end{equation}
Consequently,
for every~$\delta\in(-1,1)$, we conclude that~$\Phi^{\varphi_2}_{K_1,\,
\Phi^{\varphi_2}_{K_2\cup A_2}}
+\delta h\in
\D^{\varphi_2}_{K_1,\,
\Phi^{\varphi_2}_{K_2\cup A_2}}$ and then, by the
minimizing property of~$
\Phi^{\varphi_2}_{K_1,\,\Phi^{\varphi_2}_{K_2\cup A_2}}
$, it follows that~$\E(
\Phi^{\varphi_2}_{K_1,\,\Phi^{\varphi_2}_{K_2\cup A_2}}
)\le\E(
\Phi^{\varphi_2}_{K_1,\,
\Phi^{\varphi_2}_{K_2\cup A_2}} +\delta h)$.

This implies that
$$ \int_{\B_1} |z|^a \nabla 
\Phi^{\varphi_2}_{K_1,\,
\Phi^{\varphi_2}_{K_2\cup A_2}}
\cdot\nabla h\,dX=0.$$
Hence, we have
\begin{eqnarray*}
\E(h)&=&\int_{\B_1} |z|^a \nabla
(\Phi^{\varphi_1}_{K_1}
-\Phi^{\varphi_2}_{K_1,\,
\Phi^{\varphi_2}_{K_2\cup A_2}}
)\cdot \nabla h\,dX \\
&=& 
\int_{\B_1} |z|^a \nabla
\Phi^{\varphi_1}_{K_1} \cdot \nabla h\,dX.
\end{eqnarray*}
Thus, recalling~\eqref{chiamalo} and~\eqref{0PouyghjYU-1}, we obtain that~$\E(h)=0$.
This, together with Lemma~\ref{CHIA67877:L},
implies that~$h$ vanishes and establishes~\eqref{0OKfg67890}.

Now we set
\begin{equation}\label{9jGHJyhhjJ}
\eta:=
\Phi^{\varphi_2}_{K_1,\,
\Phi^{\varphi_2}_{K_2\cup A_2}} 
-
\Phi^{\varphi_2}_{K_2\cup A_2}
.\end{equation}
Notice that~$ \Phi^{\varphi_2}_{K_1,\,
\Phi^{\varphi_2}_{K_2\cup A_2}} -\varphi_2$ and~$
\Phi^{\varphi_2}_{K_2\cup A_2}-\varphi_2$ belong to~$\H^s_0(\B_1)$,
hence so does~$\eta$. Moreover, a.e. in~$K_1$, in the sense of traces,
we have that~$\eta = \Phi^{\varphi_2}_{K_2\cup A_2}-
\Phi^{\varphi_2}_{K_2\cup A_2}=0$. 
This says that
\begin{equation}\label{0oGjJKK6ghHHJJoop}
\eta\in\D^0_{K_1}
\end{equation}
and so we can use~\eqref{0PouyghjYU-2}
(with~$\psi:=\eta$ here) and conclude that
$$ \E(\Phi^{\varphi_2}_{K_1,\,\Phi^{\varphi_2}_{K_2\cup A_2}}
-\eta)=\E(\Phi^{\varphi_2}_{K_1,\,\Phi^{\varphi_2}_{K_2\cup A_2}}
)+\E(\eta).$$
Thus, from~\eqref{9jGHJyhhjJ},
\begin{equation}\label{7yThHjJjkKKKKl}
\begin{split}
\E(\eta)&\,=\,
\E(\Phi^{\varphi_2}_{K_1,\,\Phi^{\varphi_2}_{K_2\cup A_2}}
-\eta)-\E(\Phi^{\varphi_2}_{K_1,\,
\Phi^{\varphi_2}_{K_2\cup A_2}}
)\\
&\,=\,
\E( \Phi^{\varphi_2}_{K_2\cup A_2} )
-\E(\Phi^{\varphi_2}_{K_1,\,\Phi^{\varphi_2}_{K_2\cup A_2}}
).
\end{split}
\end{equation}
Now, since~$K_1\supseteq K_2$
and~$\Phi^{\varphi_2}_{K_2\cup A_2}=0$ a.e. in~$K_2$, we have
that
$$ \Phi^{\varphi_2}_{K_1,\,\Phi^{\varphi_2}_{K_2\cup A_2}}\in
\D^{\varphi_2}_{K_1,\,\Phi^{\varphi_2}_{K_2\cup A_2}} \subseteq
\D^{\varphi_2}_{K_2,\,\Phi^{\varphi_2}_{K_2\cup A_2}}=
\D^{\varphi_2}_{K_2,\,0}=
\D^{\varphi_2}_{K_2}$$
and so
$$\E(\Phi^{\varphi_2}_{K_1,\,\Phi^{\varphi_2}_{K_2\cup A_2}})\ge
\E(\Phi^{\varphi_2}_{K_2} ),$$
thanks
to the minimality
of~$\Phi^{\varphi_2}_{K_2}$.

This and~\eqref{7yThHjJjkKKKKl}
imply that
\begin{equation}\label{0oIoInDFGHjJ}
\E(\eta)\le
\E( \Phi^{\varphi_2}_{K_2\cup A_2} )
-\E(\Phi^{\varphi_2}_{K_2}).\end{equation}
On the other hand, from Lemma~\ref{0iGHBCcvL},
we know that
$$ \E(\Phi^{\varphi_1}_{K_1\cup A_1})=
\min_{ v\in\tilde\D^{\varphi_1}_{K_1\cup A_1} } \E(v).$$
Therefore, calling~$\psi:=\Phi^{\varphi_1}_{K_1}-v$,
we have that
\begin{equation}\label{PRE-8jHkUgh234FDCVB}
\E(\Phi^{\varphi_1}_{K_1\cup A_1})=
\min_{ \psi\in \Phi^{\varphi_1}_{K_1} -
\tilde\D^{\varphi_1}_{K_1\cup A_1} } \E(\Phi^{\varphi_1}_{K_1}-\psi).
\end{equation}
Now we claim that
\begin{equation}\label{8jHkUgh234FDCVB}
\eta \in \Phi^{\varphi_1}_{K_1} -
\tilde\D^{\varphi_1}_{K_1\cup A_1} .
\end{equation}
For this, we recall~\eqref{9jGHJyhhjJ}, and we have that
$$ \tilde\eta:= \Phi^{\varphi_1}_{K_1} -\eta
=\Phi^{\varphi_2}_{K_2\cup A_2}
-\Phi^{\varphi_2}_{K_1,\,\Phi^{\varphi_2}_{K_2\cup A_2}}
+\Phi^{\varphi_1}_{K_1} .$$
{F}rom this, it follows that~$\tilde\eta-\varphi_1\in\H^s_0(\B_1)$.
Also, a.e. in~$K_1$, we have that~$\tilde\eta
= \Phi^{\varphi_2}_{K_2\cup A_2} - \Phi^{\varphi_2}_{K_2\cup A_2}+0=0$,
in the trace sense. Moreover, a.e. in~$A_1\subseteq A_2$,
we have that~$\tilde\eta=0
-\Phi^{\varphi_2}_{K_1,\,\Phi^{\varphi_2}_{K_2\cup A_2}}
+\Phi^{\varphi_1}_{K_1}\le0$, where~\eqref{0OKfg67890}
has been exploited.
These observations imply that~$\tilde\eta
\in\tilde\D^{\varphi_1}_{K_1\cup A_1}$, which in turn
implies~\eqref{8jHkUgh234FDCVB}.

{F}rom~\eqref{PRE-8jHkUgh234FDCVB}
and~\eqref{8jHkUgh234FDCVB}, we obtain that
\begin{equation}\label{76y54t8pp}
\E(\Phi^{\varphi_1}_{K_1\cup A_1})\le
\E(\Phi^{\varphi_1}_{K_1}-\eta).\end{equation}
Moreover, by~\eqref{0oGjJKK6ghHHJJoop}
and~\eqref{0PouyghjYU-2}
(used here with~$\psi:=\eta$),
we have that
$$
\E(\Phi^{\varphi_1}_{K_1}-\eta)=\E(\Phi^{\varphi_1}_{K_1})+\E(\eta).$$
Thus, formula~\eqref{76y54t8pp}
becomes
$$
\E(\Phi^{\varphi_1}_{K_1\cup A_1})
-\E(\Phi^{\varphi_1}_{K_1})
\le
\E(\Phi^{\varphi_1}_{K_1}-\eta)-
\E(\Phi^{\varphi_1}_{K_1})
= \E(\eta).$$
Therefore, recalling~\eqref{0oIoInDFGHjJ},
$$
\E(\Phi^{\varphi_1}_{K_1\cup A_1})
-\E(\Phi^{\varphi_1}_{K_1})\le
\E( \Phi^{\varphi_2}_{K_2\cup A_2} )
-\E(\Phi^{\varphi_2}_{K_2}).$$
This concludes the
proof of Theorem~\ref{MONOTONE}.
\end{proof}

\section{Energy estimates for the fractional harmonic
replacement}\label{9uTHJkl789}

The goal of this section is to
prove that the energy of
the fractional harmonic replacement in~$K\cup A$
is controlled by
the energy of
the fractional harmonic replacement in~$K$, plus a term
of the order of the $n$-dimensional
measure of the additional set~$A$.
The precise statement of this result goes as follows:

\begin{theorem}\label{THM 1.3}
Let~$\varphi\ge0$ and~$\rho\in [1/4,\,3/4]$.
Let~$K\subseteq \B_1\cap\{z=0\}$ and~$A:= B_\rho\setminus K$. Then
$$ \E(\Phi^\varphi_{K\cup A})-\E(\Phi^\varphi_{K})\le
C\,|A|\, \|\varphi\|_{L^\infty(\B_1)}^2,$$
for some~$C>0$ that depends on~$n$ and~$s$.
\end{theorem}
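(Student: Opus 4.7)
My plan is to bound the energy increment by exhibiting an explicit competitor for the variational problem defining $\Phi^\varphi_{K\cup A}$ and controlling its Dirichlet energy. To begin, I would apply Theorem~\ref{MONOTONE} with $\varphi_1=\varphi$, $\varphi_2\equiv M:=\|\varphi\|_{L^\infty(\B_1)}$, $K_1=K_2=K$ and $A_1=A_2=A$, which dominates the left-hand side by the same quantity for the constant boundary datum $M$. Since the map $\varphi\mapsto\Phi^\varphi_S$ is homogeneous of degree one (so $\Phi^M_S=M\,\Phi^1_S$) and $\E$ is homogeneous of degree two, this reduces matters to proving $\E(\Phi^1_{K\cup A})-\E(\Phi^1_K)\le C|A|$.

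Set $\Phi:=\Phi^1_K$, so that $0\le\Phi\le 1$ by Lemmas~\ref{MPL-1} and~\ref{MPL-2}. Fix a function $\omega\in\H^s(\B_1)$ with $0\le\omega\le 1$, with trace $\omega\equiv 1$ on $B_\rho\times\{0\}$ and $\omega\equiv 0$ on $\partial\B_1$, and define $v:=\Phi(1-\omega)$ together with $\eta:=\Phi\omega$. The admissibility $v\in\D^\varphi_{K\cup A}$ is immediate: $v$ vanishes on $K$ because $\Phi$ does, $v$ vanishes on $A\subseteq B_\rho\times\{0\}$ because $\omega\equiv 1$ there, and $v=\Phi=\varphi$ on $\partial\B_1$ because $\omega\equiv 0$ there. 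The same two facts show $\eta\in\D^0_K$, so by the Euler--Lagrange orthogonality~\eqref{0PouyghjYU-1} of Lemma~\ref{HARM:0p},
\[
\int_{\B_1}|z|^a\,\nabla\Phi\cdot\nabla\eta\,dX=0.
\]
Expanding $\E(v)=\E(\Phi-\eta)$ and using this identity collapses the cross term to give $\E(v)=\E(\Phi)+\E(\eta)$, and the minimality of $\Phi^1_{K\cup A}$ then yields
\[
\E(\Phi^1_{K\cup A})-\E(\Phi^1_K)\le\E(v)-\E(\Phi)=\E(\Phi\omega).
\]

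The remaining and main task is to choose $\omega$ and bound $\E(\Phi\omega)\le C|A|$. I would expand
\[
|\nabla(\Phi\omega)|^2=\omega^2|\nabla\Phi|^2+2\omega\Phi\,\nabla\Phi\cdot\nabla\omega+\Phi^2|\nabla\omega|^2
\]
and integrate the cross term by parts via $2\omega\Phi\,\nabla\Phi\cdot\nabla\omega=\omega\,\nabla(\Phi^2)\cdot\nabla\omega$. This rewrites $\E(\Phi\omega)$ as $\int_{\B_1}|z|^a\omega^2|\nabla\Phi|^2\,dX$ plus a trace/boundary contribution pairing $\Phi(\cdot,0)^2$ with the distributional weighted divergence of $\omega$. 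Taking $\omega$ to be the weighted harmonic replacement with trace $1$ on $B_\rho\times\{0\}$ and $0$ on $\partial\B_1$ (so that $\mathrm{div}(|z|^a\nabla\omega)$ is concentrated on $B_\rho\times\{0\}$), and using $\Phi(\cdot,0)\equiv 0$ on $K$, the trace contribution is supported on $A$ and is estimated by $\|\Phi\|_{L^\infty}^2$ times the restriction of this measure to $A$; the bulk term is handled by a weighted Caccioppoli inequality applied to $\Phi$.

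The main obstacle is obtaining the linear-in-$|A|$ scaling in this final step: the naive uniform bounds give only a constant-order estimate, independent of $|A|$, whereas the theorem demands that the cost shrink with $|A|$. Overcoming this requires using the specific structure $A=B_\rho\setminus K$ together with $0\le\Phi\le 1$ and $\Phi|_K=0$ to localize the cost of the additional vanishing constraint into a thin collar near $A$, so that both the bulk and boundary contributions genuinely scale with the $n$-dimensional measure of $A$. Matching this linear accounting through the integration by parts is the technical crux of the argument.
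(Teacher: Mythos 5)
Your opening reduction is correct and matches the paper's: Theorem~\ref{MONOTONE} with $\varphi_1=\varphi$, $\varphi_2\equiv\|\varphi\|_{L^\infty(\B_1)}$, $K_1=K_2=K$, $A_1=A_2=A$, followed by the degree-one homogeneity of $\varphi\mapsto\Phi^\varphi_S$ (hence degree-two homogeneity of $\E(\Phi^\varphi_S)$ in $\varphi$), reduces cleanly to the case of constant boundary datum. Likewise, the identity $\E(\Phi^1_{K\cup A})-\E(\Phi^1_K)\le\E(\Phi\omega)$, obtained by decomposing the competitor $v=\Phi(1-\omega)$ and invoking the orthogonality relation~\eqref{0PouyghjYU-1}, is a valid consequence of minimality. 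Up to that point the argument is sound.

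The gap is exactly where you flag it, and it is not a detail: the claim $\E(\Phi\omega)\le C|A|$ is unproved, and the specific $\omega$ you propose (trace equal to $1$ on \emph{all} of $B_\rho\times\{0\}$) cannot yield it. With that choice the bulk term $\int_{\B_1}|z|^a\omega^2|\nabla\Phi|^2\,dX$ picks up the weighted Dirichlet energy of $\Phi$ over a region whose size is governed by $|B_\rho|$, not by $|A|$, and Caccioppoli does not help because $\Phi$ is \emph{not} $|z|^a$-harmonic across $K\times\{0\}$, which lies inside the support of $\omega$. A choice of $\omega$ equal to $1$ only on a thin neighbourhood of $A\times\{0\}$ is the natural repair, but making that precise requires quantitative decay of $\Phi$ and $\nabla\Phi$ near $A$ (using that $A$ is surrounded by the vanishing set $K=B_\rho\setminus A$), together with a covering argument to sum local contributions; none of this is supplied, and it is genuinely the hard part. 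More structurally, one has to reckon with the fact that the relevant cost of imposing an extra trace constraint is a capacity, not a measure, and capacity and $n$-dimensional measure do not compare in general; the linear-in-$|A|$ bound is only saved here because $A$ is a relative complement inside $B_\rho$, a feature your argument does not exploit.

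The paper handles this by a completely different device. First (Theorem~\ref{SR}) it shows that the Steiner rearrangement in $x$ (slice by slice in $z$) decreases $\E$; applied to $c-\Phi^c_K$ this reduces the general $K$ to a ball $B_r$ with $|B_r|=|K|$, which is measure-preserving so $|B_\rho\setminus B_r|=|A|$ (Lemma~\ref{LEM 4.1 - BIS}). Second, for the radial case $K=B_r$, $K\cup A=B_\rho$ with constant datum, it constructs a competitor by composing $\Phi^c_{B_r}$ with an explicit near-identity map $\alpha$ that compresses $B_\rho\times\{0\}$ into $B_r\times\{0\}$; the Jacobian and Lipschitz estimates for $\alpha$ give an energy increment of order $\rho-r$, which for $\rho,r\in[1/4,3/4]$ is comparable to $|B_\rho\setminus B_r|$ (Lemma~\ref{LEM 4.1}). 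This deformation-plus-rearrangement route is what actually produces the linear scaling; your cutoff route leaves that precise point open.
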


In the local case of the classical harmonic replacement,
a statement similar to the one in Theorem~\ref{THM 1.3} was
obtained in Lemma~2.3 of~\cite{CSV}. Also,
a fractional case in a different setting was dealt with
in Theorem~1.3 of~\cite{DV} (as a matter of fact,
the right hand side of the estimate obtained
here is more precise than the one
in~\cite{DV} since it only depends on the values
of~$\varphi$ in a fixed ball, and this plays
an important role in the blow-up analysis of the problem).

To proof Theorem~\ref{THM 1.3}, we will reduce
to the radial case. For this,
we will first show that a suitable radial rearrangement decreases
the energy and then estimate the energy in the radial case.
An important step of the proof is also obtained by using
the monotonicity property of Theorem~\ref{MONOTONE},
in order to reduce to the case of constant Dirichlet datum.
The following subsections contain the details of this
strategy.

\subsection{Symmetric rearrangements}\label{S13}

In this subsection, we will consider the symmetric rearrangement in the
variable~$x\in\R^n$, for a fixed~$z\in\R$.
In the forthcoming Theorem~\ref{SR}
we will show that this rearrangement decreases
the energy. 

To this goal, we first state a useful density property
of polynomials in the space we work with.

\begin{lemma}\label{aUYGFfghtyy678}
Let~$v\in \H^s(\B_1)$ and~$\eps>0$.
Then there exists a polynomial~$p_\eps$ such that
$$ \| v-p_\eps\|_{\H^s(\B_1)}\le\eps.$$
\end{lemma}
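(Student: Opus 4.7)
The plan is to approximate $v$ in $\H^s(\B_1)$ by successively smoother objects, ending with a polynomial. First, by the very definition of $\H^s(\B_1)$ as the closure of $C^\infty(\B_1)$ under $\|\cdot\|_{\H^s(\B_1)}$, we can find $w \in C^\infty(\B_1)$ with $\|v - w\|_{\H^s(\B_1)} \le \eps/3$. Next, we upgrade $w$ to a function that is smooth up to the boundary of $\B_1$. For this, introduce the dilation $w_\lambda(X) := w(\lambda X)$ for $\lambda \in (0,1)$. Since $w$ is smooth on the open set $\B_1$ and $\lambda \overline{\B_1} \subset \B_1$ for any $\lambda<1$, the function $w_\lambda$ is smooth on a neighborhood of $\overline{\B_1}$. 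A change of variables in the weighted integrals shows that the norms $\|w_\lambda\|_{\H^s(\B_1)}$ are uniformly controlled by $\|w\|_{\H^s(\B_1)}$ for $\lambda$ close to $1$, and a standard density argument (first reducing to the case $w \in C_0^\infty(\B_1)$, for which dilation convergence is immediate by dominated convergence, and then passing to a general~$w$) gives $w_\lambda \to w$ in $\H^s(\B_1)$ as $\lambda \to 1^-$. Choose $\lambda$ so that $\|w - w_\lambda\|_{\H^s(\B_1)} \le \eps/3$.

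At this point the target $w_\lambda$ lies in $C^1(\overline{\B_1})$, where polynomial approximation is classical. Applying the Weierstrass approximation theorem in the $C^1$ topology (e.g.\ via Bernstein polynomials on a cube containing $\overline{\B_1}$, or by extending $w_\lambda$ to a compactly supported smooth function on $\R^{n+1}$, convolving with a smooth mollifier, and truncating a Taylor expansion) yields, for every $\delta > 0$, a polynomial $p_\eps$ with $\|w_\lambda - p_\eps\|_{C^1(\overline{\B_1})} \le \delta$. Since $a = 1-2s \in (-1,1)$, the weight $|z|^a$ is integrable over the bounded domain $\B_1$, so both the weighted $L^2$-norm of $w_\lambda - p_\eps$ and the seminorm $[w_\lambda-p_\eps]_{\H^s(\B_1)}$ are controlled by $\delta\,\big(\int_{\B_1}|z|^a\,dX\big)^{1/2}$. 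Choosing $\delta$ small enough therefore makes $\|w_\lambda - p_\eps\|_{\H^s(\B_1)} \le \eps/3$, and the triangle inequality combines the three estimates to give $\|v - p_\eps\|_{\H^s(\B_1)} \le \eps$.

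The step I expect to be the main obstacle is the reduction to a function smooth up to the boundary, namely justifying that the dilations $w_\lambda$ converge to $w$ in the weighted norm $\|\cdot\|_{\H^s(\B_1)}$. One is tempted to invoke continuity of dilation in weighted Lebesgue spaces, but some care is required because $w$ is only a priori smooth on the open ball, with possibly singular behavior of its derivatives near $\partial\B_1$. Once this step is settled, the rest is a routine $C^1$-Weierstrass approximation, where the hypothesis $s \in (0,1)$ (equivalently $a \in (-1,1)$) ensures the crucial integrability $|z|^a \in L^1(\B_1)$.
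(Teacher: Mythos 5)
Your overall strategy coincides with the paper's: approximate $v$ by a smooth function using the definition of $\H^s(\B_1)$, upgrade to a polynomial via a $C^1$ Weierstra{\ss}-type theorem, and use integrability of $|z|^a$ (since $a\in(-1,1)$) to pass from a $C^1$ bound to an $\H^s$ bound. The paper goes directly from $w_\eps\in C^\infty(\B_1)$ to a polynomial, implicitly reading $C^\infty(\B_1)$ as giving a finite $C^1(\overline{\B_1})$ norm; your interpolating step through dilations is a reasonable precaution against the (possible) reading in which smooth approximants are only controlled in the interior.

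However, the justification you give for the dilation step contains a genuine gap. You propose to establish $w_\lambda\to w$ in $\H^s(\B_1)$ by ``first reducing to the case $w\in C_0^\infty(\B_1)$'' and then invoking density. But $C^\infty_0(\B_1)$ is \emph{not} dense in $\H^s(\B_1)$: by definition its closure is $\H^s_0(\B_1)$, a proper closed subspace (for instance the constant function $1$ lies in $\H^s(\B_1)$ but not in $\H^s_0(\B_1)$, since the latter admits the trace operator $T$ of Lemma~\ref{TRA} and $T_w=0$ cannot hold for the constant $1$; alternatively, $[\cdot]_{\H^s(\B_1)}$ is a norm on $\H^s_0(\B_1)$ by Lemma~\ref{CHIA67877:L}, yet $[1]_{\H^s(\B_1)}=0$). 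So uniform boundedness of the dilation operators plus convergence on $C^\infty_0(\B_1)$ only yields $w_\lambda\to w$ for $w\in\H^s_0(\B_1)$, which does not cover the $w$ you need.

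The intermediate claim itself ($w_\lambda\to w$ in $\H^s(\B_1)$) is true, but the correct route applies the density argument in an \emph{unweighted} $L^2$ where $C_c$ really is dense. Set $g:=\nabla w$, $h:=|z|^{a/2}g\in L^2(\B_1)$, and note
$$|z|^{a/2}\big(\lambda g(\lambda X)-g(X)\big)=\lambda^{1-\frac a2}h(\lambda X)-h(X)
=\lambda^{1-\frac a2}\big(h(\lambda X)-h(X)\big)+\big(\lambda^{1-\frac a2}-1\big)h(X).$$
The second term is handled by $h\in L^2(\B_1)$, while the first reduces, after extending $h$ by zero to $\R^{n+1}$, to continuity of dilation in $L^2(\R^{n+1})$ — which is where density of $C_c(\R^{n+1})$ legitimately enters. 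The lower-order weighted $L^2$ term of $w_\lambda-w$ is analogous. With this repair, your proof is correct and slightly more self-contained than the paper's, at the cost of length.
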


\begin{proof} By the definition of~$\H^s(\B_1)$ given in
Subsection~\ref{F:S}, we have that there exists~$w_\eps\in C^{\infty}(\B_1)$
such that~$\| v-w_\eps\|_{\H^s(\B_1)}\le\eps$.
Moreover, by the Stone-Weierstra{\ss} Theorem (see e.g.
Lemma~2.1 in~\cite{DSV1}), we have that there exists
a polynomial~$p_\eps$ such that~$\| w_\eps-p_\eps\|_{C^1(\B_1)}\le\eps$.
Therefore
\begin{eqnarray*}
&& \| w_\eps-p_\eps\|_{\H^s(\B_1)}= \sqrt{\int_{\B_1} |z|^a |w_\eps -p_\eps|^2\,dX}+
\sqrt{\int_{\B_1} |z|^a |\nabla w_\eps-\nabla p_\eps|^2\,dX} \\
&&\qquad\le \| w_\eps-p_\eps\|_{C^1(\B_1)} \sqrt{\int_{\B_1} |z|^a \,dX}
\le C\eps,\end{eqnarray*}
for some~$C>0$. As a consequence,
$$ \| v-p_\eps\|_{\H^s(\B_1)}\le
\| v-w_\eps\|_{\H^s(\B_1)}
+\| w_\eps-p_\eps\|_{\H^s(\B_1)}\le \eps +C\,\eps,$$
which implies the desired result after renaming~$\eps$.
\end{proof}

Now, given~$v\in L^\infty(\B_1)$, and fixed any~$z\in\R$,
we consider the Steiner symmetric rearrangement~$v^\sigma(\cdot,z)$
of~$v(\cdot,z)$ (see e.g. Section~2 of~\cite{capriani}).
With this notation, we are ready to establish the main result
of this subsection, that states that the symmetric rearrangement
in the~$x$ variables decreases energy:

\begin{theorem}\label{SR}
For any~$v\in \H^s_0(\B_1)$,
$$ \int_{\B_1} |z|^a |\nabla v^\sigma|^2\,dX
\le \int_{\B_1} |z|^a |\nabla v|^2\,dX.$$
\end{theorem}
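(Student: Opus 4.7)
The strategy is to reduce the claim to the classical Polya--Szego inequality for Steiner symmetrization applied slicewise in~$z$, exploiting the fact that the weight~$|z|^a$ depends only on the non-rearranged variable. For each fixed~$z$, the rearrangement~$v^\sigma(\cdot, z)$ is a standard symmetric decreasing rearrangement of~$v(\cdot,z)$ on~$\R^n$, and the weight~$|z|^a$ factors out of each slice.

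As a first step, I approximate~$v$ in~$\H^s(\B_1)$ by a sequence of polynomials~$p_k$ using Lemma~\ref{aUYGFfghtyy678}. For each such smooth~$p_k$ and each fixed~$z$, the classical Polya--Szego inequality in~$\R^n$ yields
\begin{equation*}
\int_{\R^n}|\nabla_x p_k^\sigma(x,z)|^2\,dx\le \int_{\R^n}|\nabla_x p_k(x,z)|^2\,dx.
\end{equation*}
The Steiner-symmetrization variant for the transverse derivative (which holds when the level sets are well-behaved, as is the case for polynomials thanks to Sard's theorem) also gives
\begin{equation*}
\int_{\R^n}|\partial_z p_k^\sigma(x,z)|^2\,dx\le \int_{\R^n}|\partial_z p_k(x,z)|^2\,dx.
\end{equation*}
Since~$|z|^a\ge 0$ depends only on~$z$, multiplying both inequalities by~$|z|^a$ and integrating over~$z\in(-1,1)$ via Fubini produces
\begin{equation*}
\int_{\B_1}|z|^a|\nabla p_k^\sigma|^2\,dX\le\int_{\B_1}|z|^a|\nabla p_k|^2\,dX.
\end{equation*}

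Finally, I pass to the limit~$k\to+\infty$. The right-hand side converges to~$\int_{\B_1}|z|^a|\nabla v|^2\,dX$ since~$p_k\to v$ in~$\H^s(\B_1)$. For the left-hand side, the $L^2$-contraction property of symmetric decreasing rearrangement applied slicewise in~$z$ yields~$p_k^\sigma\to v^\sigma$ in weighted~$L^2(\B_1)$; combining the uniform energy bound above with the weak lower semicontinuity of the weighted Dirichlet integral along a subsequence gives
\begin{equation*}
\int_{\B_1}|z|^a|\nabla v^\sigma|^2\,dX\le \liminf_{k\to+\infty}\int_{\B_1}|z|^a|\nabla p_k^\sigma|^2\,dX\le \int_{\B_1}|z|^a|\nabla v|^2\,dX,
\end{equation*}
which is the desired inequality.

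The main technical hurdle is the Polya--Szego-type inequality for the transverse derivative~$\partial_z p_k^\sigma$: unlike the pure~$\nabla_x$ part, this requires that the critical values of the slice~$x\mapsto p_k(x,z)$ have measure zero, which is precisely why reducing to polynomials (where Sard's theorem applies on each slice for a.e.~$z$) is convenient. Secondary issues include verifying joint measurability of~$v^\sigma$ in~$(x,z)$ and the slicewise continuity of the symmetrization under~$\H^s(\B_1)$-convergence; both are standard but should be noted when carrying out the density argument.
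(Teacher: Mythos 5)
Your route differs from the paper's in a structural way. The paper does not split the gradient into tangential and transverse pieces; instead, for the polynomial approximants $p_j$, it invokes a single level-set inequality (formula (4.20) in Capriani's paper on Steiner rearrangement in arbitrary codimension), namely
$$ \int_{ \partial^* \{ x\in\B_1^z :\, p_j^\sigma(x,z)>t\}} \frac{f(\nabla p_j^\sigma)}{|\nabla_x p_j^\sigma|}\,dx \le \int_{ \partial^* \{ x\in\B_1^z :\, p_j(x,z)>t\}} \frac{f(\nabla p_j)}{|\nabla_x p_j|}\,dx $$
with $f(\eta,\zeta)=|\eta|^2+|\zeta|^2$, and then integrates in $t$ via the slicewise coarea formula. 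Because $f$ is a function of the \emph{full} gradient, this packages the comparison of $|\nabla_x|^2$ and $|\partial_z|^2$ together and avoids ever having to prove separate componentwise inequalities. Your limit passage at the end is essentially the same as the paper's (nonexpansiveness of the rearrangement in $L^2$, then weak lower semicontinuity via Lemma~\ref{MARS}).

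The weak point in your argument is the transverse step. You assert the inequality $\int_{\R^n}|\partial_z p_k^\sigma(x,z)|^2\,dx\le\int_{\R^n}|\partial_z p_k(x,z)|^2\,dx$ as a ``Steiner-symmetrization variant'' that holds ``thanks to Sard's theorem.'' This appeal is misplaced: Sard's theorem controls the set of critical values of the slice maps $x\mapsto p_k(x,z)$ and is what justifies the \emph{tangential} coarea argument (smoothness of the level sets $\{p_k(\cdot,z)=t\}$ for a.e.\ $t$); it has no bearing on how rearranging each slice interacts with the $z$-derivative. The transverse inequality is a real, separate fact, and the correct mechanism is the $L^2$-nonexpansiveness of the symmetric decreasing rearrangement: from
$$ \|p_k^\sigma(\cdot,z+h)-p_k^\sigma(\cdot,z)\|_{L^2(\R^n)}\le\|p_k(\cdot,z+h)-p_k(\cdot,z)\|_{L^2(\R^n)} $$
one divides by $|h|$ and lets $h\to0$, which is licit because $p_k^\sigma$ inherits Lipschitz continuity in $z$ from $p_k$ (by the same estimate with the $L^\infty$-norm). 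Note the paper already cites this nonexpansiveness (Theorem~3.5 of~\cite{lieb-loss}), but only for the $L^2$ convergence $p_j^\sigma\to v^\sigma$, not for the transverse derivative. Without identifying the nonexpansiveness mechanism, or quoting a full-gradient P\'olya--Szeg\H{o} theorem for Steiner symmetrization in codimension $n$ (as the paper does via Capriani), your transverse step stands unjustified.
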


\begin{proof} The idea of the proof is to first
prove the desired claim for polynomials using some results
in~\cite{capriani} and then pass to the limit.
The details go as follows.
By Lemma~\ref{aUYGFfghtyy678},
we can take a sequence of polynomials $p_j$ such that
\begin{equation}\label{QQ-2}
\lim_{j\to+\infty} \| v-p_j\|_{\H^s(\B_1)}=0.
\end{equation}
Consequently,
\begin{equation}\label{QQ-3}
\lim_{j\to+\infty}\int_{\B_1} |z|^a |\nabla p_j|^2\,dX
=\int_{\B_1} |z|^a |\nabla v|^2\,dX
.\end{equation}
Now, for any~$(\eta,\zeta)\in\R^n\times\R$,
we set
$$f(\eta,\zeta):=|\eta|^2+|\zeta|^2=|(\eta,\zeta)|^2.$$
Also, for any fixed~$z\in\R$, we set
$$\B_1^z :=\{x\in\R^n {\mbox{ s.t. }}
(x,z)\in\B_1\}. $$
Notice that the Steiner symmetric rearrangement
of~$\B_1^z$ coincides with~$\B_1^z$
itself, thanks to~\eqref{90GHJ}.
By formula~(4.20)
in~\cite{capriani}, we have that
$$ \int_{ \partial^* \{ x\in\B_1^z {\mbox{ s.t. }} p_j^\sigma(x,z)>t\}}
\frac{f(\nabla p_j^\sigma)}{|\nabla_x p_j^\sigma|}\,dx
\le \int_{ \partial^* \{ x\in\B_1^z {\mbox{ s.t. }} p_j(x,z)>t\}}
\frac{f(\nabla p_j)}{|\nabla_x p_j|}\,dx,$$
for any~$t\in\R$, where~$\partial^*$ denotes, as usual,
the reduced boundary in the sense of geometric measure theory.
Thus, by the Coarea Formula,
\begin{eqnarray*}
&& \int_{\B_1^z} |\nabla p_j^\sigma|^2\,dx
= \int_{\B_1^z} f(\nabla p_j^\sigma)\,dx\\
&&\qquad=
\int_\R \left[
\int_{ \partial^* \{ x\in\B_1^z {\mbox{ s.t. }} p_j^\sigma(x,z)>t\}}
\frac{f(\nabla p_j^\sigma)}{|\nabla_x p_j^\sigma|}\,dx\right]\,dt
\\&&\qquad\le \int_\R \left[
\int_{ \partial^* \{ x\in\B_1^z {\mbox{ s.t. }} p_j(x,z)>t\}}
\frac{f(\nabla p_j)}{|\nabla_x p_j|}\,dx \right]\,dt \\
&&\qquad=
\int_{\B_1^z} f(\nabla p_j)\,dx=
\int_{\B_1^z} |\nabla p_j|^2\,dx,
\end{eqnarray*}
for any fixed~$z\in\R$.

Hence, we multiply by~$|z|^a$ and integrate, to obtain
\begin{equation}\label{QQ-1}
\int_{\B_1} |z|^a |\nabla p_j^\sigma|^2\,dX
\le \int_{\B_1} |z|^a |\nabla p_j|^2\,dX.
\end{equation}
Our objective is now to pass to the limit~\eqref{QQ-1}.
The right hand side of~\eqref{QQ-1} will pass to the limit thanks
to~\eqref{QQ-3}, so we discuss now the left hand side.
Since the Schwarz rearrangement is nonexpansive (see e.g. Theorem~3.5
of~\cite{lieb-loss}), we have that, for any fixed~$z\in\R$,
$$ \int_{\B_1^z} |v^\sigma-p_j^\sigma|^2\,dx\le
\int_{\B_1^z} |v-p_j|^2\,dx.$$
So, we multiply by~$|z|^a$ and we integrate over~$z$, and we see that
$$ \int_{\B_1} |z|^a |v^\sigma-p_j^\sigma|^2\,dX\le
\int_{\B_1} |z|^a |v-p_j|^2\,dX.$$
This and \eqref{QQ-2} give that
\begin{equation}\label{QQ-4}
\lim_{j\to+\infty} \int_{\B_1} |z|^a |v^\sigma-p_j^\sigma|^2\,dX=0.
\end{equation}
Now, by~\eqref{QQ-1} and~\eqref{QQ-3}, we have that
$$ \sup_{j\in\N} \int_{\B_1} |z|^a |\nabla p_j^\sigma|^2\,dX <+\infty.$$
Accordingly, by Lemma~\ref{MARS} (see also Theorem 1.30 in \cite{martio}),
we obtain that
$$ \lim_{j\to+\infty}
\int_{\B_1} |z|^a \nabla p_j^\sigma\cdot\nabla\phi\,dX=
\int_{\B_1} |z|^a \nabla v^\sigma\cdot\nabla\phi\,dX,$$
for any~$\phi\in \H^s(\B_1)$. As a consequence,
\begin{eqnarray*}
0 &\le & \liminf_{j\to+\infty}
\int_{\B_1} |z|^a |\nabla p_j^\sigma-\nabla v^\sigma|^2\,dX \\
&=& \liminf_{j\to+\infty}
\int_{\B_1} |z|^a \big( |\nabla p_j^\sigma|^2
+|\nabla v^\sigma|^2 -2\nabla p_j^\sigma\cdot\nabla v^\sigma\big)\,dX
\\ &=& 
\liminf_{j\to+\infty}
\int_{\B_1} |z|^a |\nabla p_j^\sigma|^2\,dX
-\int_{\B_1} |z|^a|\nabla v^\sigma|^2\,dX.
\end{eqnarray*}
This, \eqref{QQ-1} and~\eqref{QQ-3} yield that
\begin{eqnarray*}
&& \int_{\B_1} |z|^a|\nabla v^\sigma|^2\,dX\le \liminf_{j\to+\infty}
\int_{\B_1} |z|^a |\nabla p_j^\sigma|^2\,dX\\
&&\qquad\leq \liminf_{j\to+\infty}
\int_{\B_1} |z|^a |\nabla p_j|^2\,dX=
\int_{\B_1} |z|^a |\nabla v^\sigma|^2\,dX,
\end{eqnarray*}
as desired.
\end{proof}

\subsection{The radial case}

The goal of this subsection is to prove Theorem~\ref{THM 1.3}
in the radial case, that is when
the Dirichlet datum is constant, $K$ is a ball
and~$A$ is a ring. More precisely, we prove that:

\begin{lemma}\label{LEM 4.1}
Let~$\rho\in [1/4,\,3/4]$, $r\in(0,\rho)$ and~$c\in[0,+\infty)$.
Then
$$ \E(\Phi^c_{B_\rho})-\E(\Phi^c_{B_r})\le
C\,c\,|B_\rho\setminus B_r|,$$
for some~$C>0$ that depends on~$n$ and~$s$.
\end{lemma}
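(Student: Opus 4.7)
The plan is to prove the bound by an orthogonality decomposition followed by a careful estimate of the resulting defect function.

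First, by the uniqueness of the minimizer (Theorem~\ref{MIN-0}), the rotational invariance of $\E$ in $x$, and the $z$-reflection symmetry via Lemma~\ref{EVE}, each of $\Phi^c_{B_r}$ and $\Phi^c_{B_\rho}$ is radial in $x$ and even in $z$; by Lemmas~\ref{MPL-1} and~\ref{MPL-2} applied with $\gamma \equiv 0$, each is valued in $[0, c]$. Next, since $B_r\subseteq B_\rho$, the difference $w := \Phi^c_{B_\rho} - \Phi^c_{B_r}$ lies in $\H^s_0(\B_1)$ and vanishes a.e.\ on $B_r$ in the trace sense, so $w \in \D^0_{B_r}$. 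Hence the orthogonality identity~\eqref{0PouyghjYU-1} (applied with $\psi := w$) together with~\eqref{0PouyghjYU-2} yields the key decomposition
$$\E(\Phi^c_{B_\rho}) - \E(\Phi^c_{B_r}) = \E(w),$$
and the problem reduces to estimating $\E(w)$.

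Next, I would characterize $w$ variationally: the substitution $v = \Phi^c_{B_r} + \tilde w$ (as in the proof of Theorem~\ref{MONOTONE}) shows that $w$ is the $\E$-minimizer over all $\tilde w \in \H^s_0(\B_1)$ that vanish on $B_r$ and equal $-\Phi^c_{B_r}|_{z=0}$ on $B_\rho \setminus B_r$. Therefore $\E(w) \le \E(\tilde w)$ for any such admissible $\tilde w$, and it suffices to exhibit one with the desired bound. A natural choice is
$$\tilde w(x, z) := -\Phi^c_{B_r}(x, z)\, \eta(x,z),$$
with $\eta$ a smooth radial cutoff that equals $1$ on $B_\rho \times \{0\}$, vanishes near $\partial \B_1$, and whose $z$-profile is tuned to the weight $|z|^a$. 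The pointwise bound $\|\Phi^c_{B_r}\|_\infty \le c$ controls the term $|\Phi^c_{B_r}|^2 |\nabla \eta|^2$ by $c^2 |\nabla \eta|^2$, while the term $\eta^2 |\nabla \Phi^c_{B_r}|^2$ is handled by localizing $\eta$ to a neighborhood of $(B_\rho \setminus B_r) \times \{0\}$ and using the weighted elliptic regularity of $\Phi^c_{B_r}$ (which is harmonic away from $B_r \times \{0\}$ by Lemma~\ref{HARM:0p}).

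The main obstacle I anticipate is the precise shape of $\eta$: a linear cutoff produces a weighted gradient contribution of order $\delta^{-2s}$ on a slab of thickness $\delta$, which degenerates as $\delta \to 0$; a matching to the natural $|z|^{2s}$-profile of weighted harmonic extensions near $\{z=0\}$ is needed to keep $\int |z|^a |\nabla \eta|^2\, dX$ bounded uniformly. In the radial case this can be carried out by an explicit one-dimensional computation in the $(|x|, z)$-plane, after which the integration over $x$ produces the factor $|B_\rho \setminus B_r|$ and the bound claimed in Lemma~\ref{LEM 4.1} follows.
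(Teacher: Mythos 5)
Your approach diverges from the paper's at the very first step: you decompose $\Phi^c_{B_\rho} = \Phi^c_{B_r} + w$ with $w\in\D^0_{B_r}$ and use~\eqref{0PouyghjYU-1}--\eqref{0PouyghjYU-2} to obtain the identity $\E(\Phi^c_{B_\rho}) - \E(\Phi^c_{B_r}) = \E(w)$, then aim to bound $\E(w)$ by a competitor. The paper instead constructs a bi-Lipschitz map $\alpha(X) = \big(1 - \frac{\rho-r}{\rho}\tau(X)\big)X$ contracting $B_\rho\times\{0\}$ into $B_r\times\{0\}$, takes $\Phi^c_{B_r}\circ\alpha$ as the competitor for $\Phi^c_{B_\rho}$, and controls the distortion factor $\big(1+C(\rho-r)\big)$ by elementary Jacobian and $|z|$-estimates. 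Your orthogonality reduction is correct and clean, but the competitor you propose does not close, and the obstruction you flag is in fact not removable by tuning.

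Set $\mu := \rho - r$. The trace of $\eta$ must equal $1$ on $(B_\rho\setminus B_r)\times\{0\}$ and $\eta$ must vanish near $\partial\B_1$, so $\eta$ must transit from $1$ to $0$ in the $z$-direction. For \emph{any} profile $\eta_2$ with $\eta_2(0)=1$ and $\eta_2(\delta)=0$, Cauchy--Schwarz gives
\begin{equation*}
1 = |\eta_2(0)-\eta_2(\delta)| \le \left(\int_0^\delta z^{-a}\,dz\right)^{\frac12}\left(\int_0^\delta z^a|\eta_2'(z)|^2\,dz\right)^{\frac12},
\end{equation*}
hence $\int_0^\delta z^a|\eta_2'|^2\,dz \ge c\,\delta^{-2s}$ regardless of the shape; ``matching to the $|z|^{2s}$-profile'' cannot lower this. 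So if you localize $\mathrm{supp}\,\eta$ to a slab of thickness $\delta$ about $(B_\rho\setminus B_r)\times\{0\}$ to generate a factor of order $|B_\rho\setminus B_r|\sim\mu$ from the $x$-integration, the contribution $c^2\int|z|^a|\nabla\eta|^2\gtrsim c^2\mu\,\delta^{-2s}$ is (after optimizing in $\delta\sim\mu$) of order $c^2\mu^{1-2s}$, which dominates the target $c^2\mu$ as $\mu\to0$; whereas keeping $\delta$ of order one makes $\int|z|^a|\nabla\eta|^2$ of order one and the factor $|B_\rho\setminus B_r|$ never appears. The crude bound $|\Phi^c_{B_r}|\le c$ is thus too weak where $\nabla\eta$ lives; to rescue the argument one would need a boundary estimate of the type $|\Phi^c_{B_r}(x,z)|\lesssim c\,\big[(|x|-r)_+ + |z|\big]^s$ on $\mathrm{supp}\,\nabla\eta$ together with a comparably localized bound on $\int\eta^2|z|^a|\nabla\Phi^c_{B_r}|^2$, and you establish neither. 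The paper's change-of-variables competitor sidesteps this difficulty altogether, since $\Phi^c_{B_r}\circ\alpha$ is produced by a global bi-Lipschitz deformation rather than by truncating across $\{z=0\}$.
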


\begin{proof} 
If~$c=0$, then~$\Phi^c_{B_\rho}\equiv0$
and~$\Phi^c_{B_r}\equiv0$, in virtue of
Lemmata~\ref{MPL-1} and~\ref{MPL-2}. Thus we may assume that~$c\ne0$.
In fact, by dividing by~$c\ne0$, we may assume that~$c=1$.

We let~$\mu:=\rho-r$ and we observe that
\begin{equation}\label{0-JjkYRggh-1}
\begin{split}
&|B_\rho\setminus B_r| =|B_1|\,(\rho^n-r^n)
=|B_1|\,(\rho-r)\,\sum_{j=1}^{n} \rho^{n-j} r^{j-1}\\
&\qquad \ge |B_1|\,(\rho-r)\,\rho^{n-1}\ge
\frac{|B_1|}{4^{n-1}}\,(\rho-r)=\frac{|B_1|\,\mu}{4^{n-1}}.
\end{split}\end{equation}
Now we fix~$\phi\in C^\infty(\R^{n+1})$ such that~$\phi=1=c$
in~$\R^{n+1}\setminus\B_1$ and~$\phi=0$ in~$B_{3/4}\times\{0\}$.
We let~$C_0:=\E(\phi)$.
By construction~$\phi$ vanishes in~$B_\rho\times\{0\}\supseteq
B_r\times\{0\}$, therefore,
by the minimality properties of~$\Phi^c_{B_\rho}$ and~$\Phi^c_{B_r}$,
we have that
\begin{equation}\label{0-JjkYRggh-2}
\max\{ \E(\Phi^c_{B_\rho}),\, \E(\Phi^c_{B_r})\}
\le \E(\phi) = C_0 = C_0\,c.\end{equation}
We define
\begin{eqnarray*}
&& {\mathcal{C}}_+:=
B_{5/6}\times \left(-\frac12,\frac12\right)\\
{\mbox{and }}&&
{\mathcal{C}}_-:=
B_{4/5}\times \left(-\frac14,\frac14\right).\end{eqnarray*}
By~\eqref{90GHJ},
$$ B_\rho\times\{0\} \Subset
{\mathcal{C}}_- \Subset {\mathcal{C}}_+ \Subset\B_1.$$
Therefore, there exists~$\tau\in C^\infty(\R^{n+1},[0,1])$
such that~$\tau=1$ in~${\mathcal{C}}_-$
and~$\tau=0$ in~$\R^{n+1}\setminus {\mathcal{C}}_+$.

For any~$X\in\R^{n+1}$ we define
$$ \alpha(X):= \left( 1-\frac{\mu}{\rho}\,\tau(X)\right)\,X
=X-\frac{\mu}{\rho}\,\tau(X)\,X.$$
Let also~$1_{n+1}$ be the identity $(n+1)$-dimensional
matrix.
Notice that~$X\mapsto \tau(X)\,X$ is a smooth and compactly
supported function, and so
\begin{equation}\label{78-09-7889-0}
|D\alpha(X)-1_{n+1}|=
\frac{\mu}{\rho}\,\left| D\big( \tau(X)\,X \big) \right|
\le C_1\mu,\end{equation}
for some~$C_1>0$.
Accordingly
\begin{equation}\label{78-09-7889-1}
|\det D\alpha(X)|\ge 1-C_2\mu,\end{equation}
as long as~$\mu$ is small enough.

Now we observe that
\begin{equation}\label{78-09-7889-2-BIS}
\alpha(B_\rho\times\{0\})\subseteq B_r\times\{0\}.
\end{equation}
Indeed, if~$x\in B_\rho$, then~$(x,0)\in {\mathcal{C}}_-$,
thus~$\tau(x,0)=1$, which gives
$$ \alpha(x,0)= \left( 1-\frac{\mu}{\rho}\right)\,(x,0)
=\frac{r}{\rho}\,(x,0),$$
proving~\eqref{78-09-7889-2-BIS}.

We also notice that
\begin{equation}\label{78-09-7889-3}
\alpha(\R^{n+1}\setminus\B_1)\subseteq\R^{n+1}\setminus\B_1.
\end{equation}
Indeed, if~$X\in \R^{n+1}\setminus\B_1$, then
in particular~$X\in
\R^{n+1}\setminus{\mathcal{C}}_+$, which gives that~$\tau(X)=0$
and so~$\alpha(X)=X\in \R^{n+1}\setminus\B_1$,
establishing~\eqref{78-09-7889-3}.

Now we claim that
\begin{equation}\label{78-09-7889-7}
\alpha(\B_1)\subseteq\B_1.
\end{equation}
To prove this, let~$X\in\B_1$. 
If~$X\in
\B_1\setminus{\mathcal{C}}_+$, we have that~$\tau(X)=0$,
thus~$\alpha(X)=X\in\B_1$ and we are done.
If instead~$X\in {\mathcal{C}}_+ = B_{5/6}\times [-1/2,\,1/2]$,
then~$\alpha(X)=\theta(X)\,X$,
for some~$\theta(X)\in[0,1]$, thus~$\alpha(X)$
also lies in~$B_{5/6}\times [-1/2,\,1/2]={\mathcal{C}}_+\subseteq\B_1$,
and this completes the proof of~\eqref{78-09-7889-7}.

Now we observe that
\begin{equation}\label{9idyhGHRDFu}
{\mbox{if $\tilde X=(\tilde x,\tilde z)=\alpha(X)=\alpha(x,z)$,
then }} \frac{|z|}{1+C_3\mu}\le |\tilde z|\le (1+C_3\mu)|z|,
\end{equation}
for some~$C_3>0$, as long as~$\mu$ is sufficiently small.
To prove this, we observe that
$$ \tilde z=
\left( 1-\frac{\mu}{\rho}\,\tau(X)\right)\,z,$$
and this gives~\eqref{9idyhGHRDFu}.

Now we define~$\phi^\star(X):=\Phi^c_{B_r}(\alpha(X))$.
{F}rom~\eqref{78-09-7889-2-BIS}
and~\eqref{78-09-7889-3}, we have that~$\phi^\star\in\D^c_{B_\rho}$,
therefore the minimizing property of~$\Phi^c_{B_\rho}$ gives that
\begin{equation}\label{78-09-7889-4}
\E(\Phi^c_{B_\rho})\le \E(\phi^\star).\end{equation}
On the other hand, by~\eqref{78-09-7889-0},
\eqref{78-09-7889-1}, \eqref{78-09-7889-7}
and~\eqref{9idyhGHRDFu},
\begin{eqnarray*}
\E(\phi^\star) &=& \int_{\B_1}|z|^a \big|\nabla \big(
\Phi^c_{B_r}(\alpha(X))\big)\big|^2\,dX \\
&\le& (1+C_1\mu)^2
\int_{\B_1}|z|^a \big|\nabla \Phi^c_{B_r}(\alpha(X))\big|^2\,dX
\\ &\le& (1+C_4\mu)\,
\int_{\alpha(\B_1)}|\tilde z|^a
\big|\nabla \Phi^c_{B_r}(\tilde X)\big|^2\,d\tilde X\\
&\le& (1+C_4\mu)\,
\int_{\B_1}|\tilde z|^a
\big|\nabla \Phi^c_{B_r}(\tilde X)\big|^2\,d\tilde X\\
&=& (1+C_5\mu)\,\E(\Phi^c_{B_r}),
\end{eqnarray*}
for some~$C_4$, $C_5>0$,
where the change of variable~$\tilde X:=\alpha(X)$ was exploited.

Hence, recalling~\eqref{78-09-7889-4}, we obtain that
$$ \E(\Phi^c_{B_\rho})\le \E(\phi^\star)
\le (1+C_5\mu)\,\E(\Phi^c_{B_r}),$$
provided that~$\mu$ is small enough.
As a consequence, from~\eqref{0-JjkYRggh-1}
and~\eqref{0-JjkYRggh-2},
$$ \E(\Phi^c_{B_\rho})-\E(\Phi^c_{B_r})\le
C_5\mu\E(\Phi^c_{B_r})\le C_6\,|B_\rho\setminus B_r|\,
\E(\Phi^c_{B_r})\le C_7\,c\,|B_\rho\setminus B_r|,$$
for some~$C_6$, $C_7>0$, provided that~$\mu$ is small enough.

This completes the proof of Lemma~\ref{LEM 4.1}
for small~$\mu$, say~$\mu\le\mu_0$ for a suitable~$\mu_0>0$.

Conversely, when~$\mu>\mu_0$, we have that
$$ \E(\Phi^c_{B_\rho})-\E(\Phi^c_{B_r})\le
\E(\Phi^c_{B_\rho})\le C_0\,c \le C_0\,c\,\mu_0^{-1}\,\mu
\le C_8\,\,c\,|B_\rho\setminus B_r|,$$
for some~$C_8>0$,
thanks to~\eqref{0-JjkYRggh-1}
and~\eqref{0-JjkYRggh-2},
which establishes 
Lemma~\ref{LEM 4.1} also when~$\mu>\mu_0$.
\end{proof}

Now we generalize Lemma~\ref{LEM 4.1}
to the case in which the Dirichlet datum is still constant,
but the supporting sets~$K$ and~$A$ are not necessarily
radially symmetric. In this framework, we have:

\begin{lemma}\label{LEM 4.1 - BIS}
Let~$\rho\in [1/4,\,3/4]$ and~$c\in[0,+\infty)$.
Let~$K\subseteq \B_\rho\cap\{z=0\}$ and~$A:= B_\rho\setminus K$. Then
$$ \E(\Phi^c_{K\cup A})-\E(\Phi^c_{K})\le
C\,c\,|A|.$$
for some~$C>0$ that depends on~$n$ and~$s$.
\end{lemma}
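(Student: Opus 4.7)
Since $K\subseteq \B_\rho\cap\{z=0\}=B_{9\rho/10}\times\{0\}\subseteq B_\rho\times\{0\}$, the set $K\cup A$ equals $B_\rho$, so the claim reduces to proving
\[ \E(\Phi^c_{B_\rho})-\E(\Phi^c_K)\le Cc\,|A|. \]
My plan is to insert, between these two energies, the radial replacement $\E(\Phi^c_{B_{r^*}})$ for a ball $B_{r^*}$ with $|B_{r^*}|\ge|K|$, and then to invoke Lemma~\ref{LEM 4.1}.

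To produce such a ball, set $w:=c-\Phi^c_K$. By Lemmata~\ref{MPL-1} and~\ref{MPL-2}, $0\le w\le c$; by definition of $\D^c_K$, $w\in\H^s_0(\B_1)$; and by the constraint on $\Phi^c_K$, $w(\cdot,0)=c$ on $K$ in the trace sense. Let $w^\sigma$ denote the slice-by-slice Steiner rearrangement in the $x$-variable from Subsection~\ref{S13}; by Theorem~\ref{SR}, $\E(w^\sigma)\le\E(w)=\E(\Phi^c_K)$. Setting $v^\star:=c-w^\sigma$, one checks that $v^\star-c\in\H^s_0(\B_1)$ and that, at $z=0$, $v^\star$ vanishes on the ball $B_{r^*}$ corresponding to the superlevel set $\{w^\sigma(\cdot,0)=c\}$, whose measure equals $|\{w(\cdot,0)=c\}|\ge|K|$. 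Hence $v^\star\in\D^c_{B_{r^*}}$ and, by minimality of the fractional harmonic replacement,
\[ \E(\Phi^c_{B_{r^*}})\le\E(v^\star)=\E(w^\sigma)\le\E(\Phi^c_K). \]

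Finally, I would conclude case by case. If $r^*\ge\rho$, then $\D^c_{B_{r^*}}\subseteq\D^c_{B_\rho}$ forces $\E(\Phi^c_{B_\rho})\le\E(\Phi^c_{B_{r^*}})\le\E(\Phi^c_K)$, so the bound is immediate. If $0<r^*<\rho$, Lemma~\ref{LEM 4.1} yields
\[ \E(\Phi^c_{B_\rho})-\E(\Phi^c_{B_{r^*}})\le Cc\,|B_\rho\setminus B_{r^*}|=Cc\,(|B_\rho|-|B_{r^*}|)\le Cc\,(|B_\rho|-|K|)=Cc\,|A|, \]
which combined with the previous display gives the claim. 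The degenerate case $|K|=0$ (so $r^*=0$, which falls outside the range $r\in(0,\rho)$ of Lemma~\ref{LEM 4.1}) is handled directly, since there $|A|=|B_\rho|\ge|B_{1/4}|$ is bounded below while $\E(\Phi^c_{B_\rho})\le Cc$ by the smooth competitor $\phi$ used in the proof of Lemma~\ref{LEM 4.1}. The main obstacle in this plan is the justification that $w^\sigma$ is a genuine competitor, namely that $w^\sigma\in\H^s_0(\B_1)$ and that its trace on $\{z=0\}$ agrees, up to a null set, with the pointwise Steiner rearrangement of the trace of $w$, so that $\{w^\sigma(\cdot,0)=c\}$ is indeed the ball of measure $|\{w(\cdot,0)=c\}|$; these properties should follow by approximation from standard rearrangement theory combined with the trace construction of Subsection~\ref{F:S}.
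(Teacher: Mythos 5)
Your argument is correct and follows essentially the paper's strategy: rearrange $c-\Phi^c_K$ slice by slice to produce a radial competitor vanishing on a ball and then invoke Lemma~\ref{LEM 4.1}. The paper's only streamlining is to fix $r$ by $|B_r|=|K|$ (which is automatically in $(0,\rho)$ whenever $|K|>0$, since $K\subseteq B_{9\rho/10}$), so the case $r^*\ge\rho$ never arises; and it passes silently over the very point you flag at the end about the trace of the slice-wise rearrangement, so your caution there is well placed but does not mark a gap relative to the paper.
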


\begin{proof} We point out that Lemma~\ref{LEM 4.1 - BIS}
reduces to Lemma~\ref{LEM 4.1} in the special case in which~$K:=B_r$,
with~$r\in(0,\rho)$. In the general case, we argue as follows.
We take~$r$ such that~$|B_r|=|K|$. Then
\begin{equation}\label{0ouhYHUJjJK}
|A|=|B_\rho\setminus K|=|B_\rho|-|K|=|B_\rho|-|B_r|=|B_\rho\setminus B_r|.
\end{equation}
Also, we define~$\psi:=c-\Phi^c_K$.
Notice that~$0\le \psi\le c$, due to Lemmata~\ref{MPL-1} and~\ref{MPL-2}
and~$\psi\in\H^s_0(\B_1)$.
Thus~$\psi\in\D^0_{K,c}$ and so its symmetric rearrangement~$\psi^\sigma$ in the
variable~$x\in\R^n$ (as defined in Subsection~\ref{S13})
satisfies~$\psi^\sigma\in\D^0_{B_r,c}$.

Let~$\psi^\star:=c-\psi^\sigma$. Then~$\psi^\star\in \D^c_{B_r}$,
therefore, by the minimality of~$\Phi^c_{B_r}$, we have that
\begin{equation*}
\E(\Phi^c_{B_r})\le\E(\psi^\star)=\E(\psi^\sigma).
\end{equation*}
On the other hand, by Theorem~\ref{SR}, we know that~$\E(\psi^\sigma)\le
\E(\psi)$. As a consequence
\begin{equation*} 
\E(\Phi^c_{B_r})\le\E(\psi)=\E(\Phi^c_K).\end{equation*}
Now we remark that~$K\cup A=B_\rho$, therefore
$$ \E(\Phi^c_{K\cup A})-\E(\Phi^c_{K})=
\E(\Phi^c_{B_\rho})-\E(\Phi^c_{K})\le
\E(\Phi^c_{B_\rho})-\E(\Phi^c_{B_r}).$$
Then, using  Lemma~\ref{LEM 4.1},
$$ \E(\Phi^c_{K\cup A})-\E(\Phi^c_{K})
\le C\,c\,|B_\rho\setminus B_r|.$$
This and~\eqref{0ouhYHUJjJK}
complete the proof of Lemma~\ref{LEM 4.1 - BIS}.
\end{proof}

\subsection{Completion of the proof of Theorem~\ref{THM 1.3}}

With the arguments introduced till now, we can complete the
proof of Theorem~\ref{THM 1.3}. The idea is that, by the
monotonicity property in Theorem~\ref{MONOTONE},
one can reduce to the case of constant boundary data and then
use Lemma~\ref{LEM 4.1 - BIS}. The details of the proof go
as follows.

\begin{proof}[Proof of Theorem~\ref{THM 1.3}]
We define~$c^\star:= \|\varphi\|_{L^\infty(\B_1)}$,
$K^\star:= K\cap B_\rho$ and
$$ A^\star:= B_\rho\setminus K^\star = B_\rho\setminus(K\cap B_\rho)
=B_\rho\setminus K=A.$$
{F}rom Lemma~\ref{LEM 4.1 - BIS}, we have
\begin{equation}\label{9I79jH}
\E(\Phi^{c^\star}_{K^\star\cup A^\star})-
\E(\Phi^{c^\star}_{K^\star})\le
C\,c^\star\,|A^\star|= C\, \|\varphi\|_{L^\infty(\B_1)}\,|A|.\end{equation}
On the other hand, 
we see that~$c^\star\ge\varphi\ge0$ a.e. in~$\B_2$,
$K^\star\subseteq K\subseteq B_{\frac{9}{10}}$
and~$A^\star =A\Subset B_{\frac{9}{10}}$, therefore,
by Theorem~\ref{MONOTONE},
$$ \E(\Phi^{\varphi}_{ K\cup A})
-\E(\Phi^{\varphi}_{K}) \le
\E(\Phi^{c^\star}_{K^\star\cup A^\star})
-\E(\Phi^{c^\star}_{ K^\star}).$$
Combining this with~\eqref{9I79jH},
we obtain the desired result.
\end{proof}

\section{Density estimates}\label{8ikY89521}

In this section, we deal with density estimates.
A crucial ingredient of our argument will
be the estimate previously obtained in
Theorem~\ref{THM 1.3}.

\subsection{Density estimates from one side}

We start by proving a density estimate from one
side and a uniform bound on the minimizers.

\begin{lemma}\label{L--1}
Assume that~$(u,E)$ is minimizing in~$B_1$, with~$u\ge0$
a.e. in~$\R^n\setminus B_1$ and~$0\in \partial E$.

Then, there exist~$\delta$, $K>0$, 
possibly depending on~$n$, $s$, and~$\sigma$
such that
\begin{equation}\label{CD-3}
|B_{1/2}\setminus E| \ge \delta
\end{equation}
and
\begin{equation}\label{CD-4}
\| u\|_{L^\infty(B_{1/2})}\le K.
\end{equation}
\end{lemma}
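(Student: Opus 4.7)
The plan is to combine the energy estimate of Theorem~\ref{THM 1.3} with a competitor built from a fractional harmonic replacement in the extended variables, exploiting the hypothesis $0\in\partial E$. First I would reduce to the case $u\ge 0$ a.e.\ in $\R^n$: since $t\mapsto t^+$ is $1$-Lipschitz, $(u^+,E)$ has fractional Dirichlet energy no larger than $(u,E)$ while keeping the same perimeter and, using $u\ge 0$ outside $B_1$, the same boundary data, so minimality of $(u,E)$ forces $u=u^+$ a.e. The representation formula~\eqref{EXTENDED} then gives $\overline u\ge 0$ throughout $\R^{n+1}$.

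For the density estimate~\eqref{CD-3} I argue by contradiction: assume $|B_{1/2}\setminus E|<\delta$ for a small $\delta$ to be chosen, and produce a competitor strictly cheaper than $(u,E)$. Fix $\rho\in[1/4,1/2]$ and let $\tilde v:=\Phi^{\overline u}_{B_\rho}$ be the fractional harmonic replacement in $\B_1$ with datum $\overline u$ and vanishing trace on $B_\rho\subset\{z=0\}$; put $v(x):=\tilde v(x,0)$. Lemma~\ref{MPL-2} gives $v\ge 0$ everywhere, and $v=0$ on $B_\rho$ by construction. I take $F:=\{v>0\}\cup(E\setminus B_1)$; since $v$ coincides with $u$ outside $B_{9/10}$ and admissibility of $(u,E)$ forces $\{u>0\}\subseteq E$ up to null sets, one checks that $F\setminus B_1=E\setminus B_1$ and that $(v,F)$ is admissible. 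Lemma~\ref{EXT-L} with $\mathcal{U}:=\B_1$ then yields
$$\Per_\sigma(E,B_1)-\Per_\sigma(F,B_1)\le \E(\tilde v)-\E(\overline u),$$
and Theorem~\ref{THM 1.3} (with $K=\varnothing$ and $A=B_\rho$), together with the minimality of the unconstrained extension $\Phi^{\overline u}_\varnothing$, controls the right-hand side by $C\rho^{n}\|\overline u\|_{L^\infty(\B_1)}^2$.

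To close~\eqref{CD-3} I would bound the left-hand side from below: in the regime $|B_{1/2}\setminus E|\ll\rho^n$, the symmetric difference $E\triangle F$ is essentially the filled ball $B_\rho$, and a nonlocal perimeter/isoperimetric computation provides a gain of order $c\,\rho^{n-\sigma}$. Averaging over $\rho\in[1/4,1/2]$ and using Lemma~\ref{Cr Lemma} to control $\|\overline u\|_{L^\infty(\B_1)}$ in terms of $\|u\|_{L^\infty(B_1)}$ plus a tail, taking $\delta$ small enough contradicts minimality. For the uniform bound~\eqref{CD-4}, the idea is that the positive density of $\{u=0\}$ in $B_{1/2}$ just established, combined with the weighted harmonicity of $\overline u$ away from $\{z=0\}\cap\{u=0\}$, supports a barrier/supersolution argument: a nearby zero of $u$ guaranteed by~\eqref{CD-3} translates into a pointwise cap on $u$ at an arbitrary $x_0\in B_{1/2}$.

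The main obstacle is the circularity between~\eqref{CD-3} and~\eqref{CD-4}: the Dirichlet-side estimate from Theorem~\ref{THM 1.3} involves $\|\overline u\|_{L^\infty(\B_1)}$, which by Lemma~\ref{Cr Lemma} depends on $\|u\|_{L^\infty(B_1)}$, so the two estimates have to be proved simultaneously. I would handle this with a De~Giorgi--style iteration on dyadic shells, propagating the pair of estimates from an outer scale where a tail bound on $u$ comes cheaply down to $B_{1/2}$, using the free-boundary condition $0\in\partial E$ both to start and to close the induction.
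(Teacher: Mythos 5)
Your proposal goes in the wrong direction for~\eqref{CD-3} and ends up reproducing the shape of the argument used for the \emph{other} one-sided density estimate (Lemma~\ref{L--1bis}), where it is in fact needed. To see the problem, note what happens to the perimeter when, under the contradiction hypothesis $|B_{1/2}\setminus E|<\delta$, you pass from $E$ to $F$ by forcing the competitor to vanish on $B_\rho$ with $\rho\in[1/4,1/2]$ fixed: writing $A':=E\cap B_\rho\approx B_\rho$, one has
\[
\Per_\sigma(E,B_1)-\Per_\sigma(F,B_1)=L(A',\R^n\setminus E)-L(A',E\setminus A')\approx -c\,\rho^{n-\sigma},
\]
since $L(A',E\setminus A')\gtrsim\rho^{n-\sigma}$ (you carve a new ball-shaped interface into a set that almost fills $B_{1/2}$) while $L(A',\R^n\setminus E)$ is small precisely because $|B_{1/2}\setminus E|$ is small. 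So the left-hand side of your display is \emph{negative}, not $\geq c\rho^{n-\sigma}$, and the competitor $(v,F)$ is more expensive on both terms: constraining the trace to vanish on a larger set raises the Dirichlet part, and the nonlocal perimeter goes up as well. Minimality gives no contradiction. Moreover $\rho$ is of order one, so even if the sign were right, comparing $c\rho^{n-\sigma}$ with $C\rho^n\|\overline u\|_{L^\infty}^2$ would only give information when $\|\overline u\|_{L^\infty}$ is \emph{small}, which you do not have.

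The paper's proof of~\eqref{CD-3} is considerably lighter and avoids harmonic replacement entirely. The key observation is that since $u\geq 0$ (by Lemma~3.3 of~\cite{DSV2}) and $u=0$ a.e.\ on $\R^n\setminus E$, you may \emph{enlarge} the positivity set to $E\cup A$, $A:=B_r\setminus E$, \emph{without changing $u$}. The Dirichlet term is then identically unchanged, and minimality reduces to $\Per_\sigma(E,B_1)\leq\Per_\sigma(E\cup A,B_1)$, i.e.\ $L(A,E)\leq L(A,\R^n\setminus(E\cup A))$. Combined with the fractional isoperimetric inequality and a De~Giorgi dyadic iteration in $r$, this forces $|B_{1/4}\setminus E|=0$, contradicting $0\in\partial E$. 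Because no Dirichlet estimate is needed at this stage, the circularity you flagged between~\eqref{CD-3} and~\eqref{CD-4} simply does not arise, and your proposed ``simultaneous'' De~Giorgi iteration over shells is not needed. The machinery you invoke --- the extended replacement $\Phi^{\overline u}_{B_\rho}$, Lemma~\ref{EXT-L}, Theorem~\ref{THM 1.3}, and the $L^\infty$ control on $\overline u$ via Lemma~\ref{L-ext} --- is exactly what the paper deploys later, in Lemma~\ref{L--1bis}, where one must \emph{shrink} $E$ near a free boundary point, hence must pay a Dirichlet price and control it by $C|A|\,\|\overline u\|_{L^\infty}^2$. Finally, for~\eqref{CD-4} your barrier sketch is too vague to assess: the paper's route is to form the $s$-harmonic replacement $u_\star$ of~\cite{DV} in $E\cup B_{3/4}$, use the orthogonality identity and minimality to bound the Gagliardo energy of $\psi:=u_\star-u$ by a perimeter difference that is itself universally bounded, apply the fractional Harnack inequality to $u_\star$ in $B_{1/2}$, and exploit the already-proved $|B_{1/2}\setminus E|\geq\delta$ to turn an $L^2$ bound on $\psi$ into an $L^\infty$ bound on $u_\star$; a separate comparison then shows $u_\star\geq u$, which transfers the bound to $u$.
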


\begin{proof} The proof is an appropriate
modification of the one in Lemma~3.1 of~\cite{CSV},
combined with some results in~\cite{DV}.
First we prove~\eqref{CD-3}. For this, for any~$r\in
[1/4,3/4]$, we define
\begin{equation}\label{A678ddd-0}
V_r:=|B_{r}\setminus E| \ {\mbox{ and }} \
a(r):= {\mathcal{H}}^{n-1}\big( (\partial B_r)\setminus E\big).\end{equation}
The terms~$V_r$ and~$a(r)$ play the role of volume and area terms,
respectively.

We suppose, by contradiction, that
\begin{equation}\label{ABCD}
V_{1/2}<\delta\end{equation}
(of course we are free to choose~$\delta$ suitably small, and then
we will obtain a contradiction for such fixed~$\delta$).
We set
\begin{equation}\label{A678ddd}
A:=B_r\setminus E.\end{equation}
By Lemma~3.3 in~\cite{DSV2} we have that
\begin{equation}\label{tyud}
{\mbox{$u\ge0$ a.e.,}}\end{equation} hence
$u\ge 0$ a.e. in~$E$ and~$u=0$ a.e. in~$\R^n\setminus E$.

In particular, $u\ge0$ a.e. in~$E\cup A$ 
and~$u=0$ a.e. in~$(\R^n\setminus E)\setminus A=\R^n\setminus (E\cup A)$.
As a consequence, the pair~$(u, E\cup A)$ is admissible.

Accordingly, from the minimality of~$(u,E)$, we obtain that~${\mathcal{F}}_{B_1}
(u,E)\le {\mathcal{F}}_{B_1}(u, E\cup A)$, that is
\begin{equation}\label{78d}
\Per_\sigma\left(E,B_1\right)-
\Per_\sigma\left(E\cup A,B_1\right)\le 0.
\end{equation}
Also, by~\eqref{oiU67jsdfgH},
\begin{equation}\label{78e}
\Per_\sigma\left(E,B_1\right)-
\Per_\sigma\left(E\cup A,B_1\right)=
L(A,E)- L(A,\R^n\setminus (E\cup A)).\end{equation}
Hence, recalling \eqref{78d}, we conclude that
\begin{equation}\label{6789f}
\begin{split}
L(A,\R^n\setminus A)\,&=L(A,E)+L(A,\R^n\setminus (E\cup A))\\
&=2L(A,\R^n\setminus (E\cup A))+\Per_\sigma\left(E,B_1\right)-
\Per_\sigma\left(E\cup A,B_1\right)\\
&\le 2L(A,\R^n\setminus (E\cup A)) \\
&\le 2L(A,\R^n\setminus B_r).
\end{split}
\end{equation}
Furthermore, using the fractional Sobolev inequality
(see e.g. \cite{DPV12}), we have that
$$\| \chi_A\|_{L^{\frac{2n}{n-\sigma}}(\R^n)}^2
\le C\, \iint_{\R^{2n}}
\frac{|\chi_A(x)-\chi_A(y)|^2}{|x-y|^{n+\sigma}}\, dx\,dy,$$
for some~$C>0$, which may be written as
\begin{equation}\label{6789tatta}
V_r^{\frac{n-\sigma}{n}} \le 2C
\,L(A,\R^n\setminus A).\end{equation}
{F}rom this and~\eqref{6789f}, possibly renaming constants,
we deduce that
\begin{equation}\label{tff}
V_r^{\frac{n-\sigma}{n}} \le C\,
L(A,\R^n\setminus B_r).
\end{equation}
Now, using polar coordinates, we see that, for any~$x\in A\subseteq B_r$,
$$ \int_{\R^n\setminus B_r} \frac{dy}{|x-y|^{n+\sigma}} \le
\int_{\R^n\setminus B_{r-|x|}} \frac{dz}{|z|^{n+\sigma}}\le
C\,\int_{r-|x|}^{+\infty} \rho^{-1-\sigma}\,d\rho\le 
C\,(r-|x|)^{-\sigma},$$
up to renaming constants. Therefore, integrating over~$x\in A=B_r\setminus E$,
we obtain that
\begin{eqnarray*}
&& L(A,\R^n\setminus B_r)\le C\,\int_{B_r\setminus E} (r-|x|)^{-\sigma}\,dx
\\ &&\qquad\le C\,\int_{0}^r a(\rho)\,(r-\rho)^{-\sigma} \rho^{n-1}\,d\rho
\le C\,\int_{0}^r a(\rho)\,(r-\rho)^{-\sigma} \,d\rho.\end{eqnarray*}
So, we plug this into~\eqref{tff} and we conclude that
\begin{equation*}
V_r^{\frac{n-\sigma}{n}} \le C\,
\int_{0}^r a(\rho)\,(r-\rho)^{-\sigma} \,d\rho.\end{equation*}
Now we fix~$t\in[1/4,1/2]$ and we integrate this estimate
in~$r\in[1/4,t]$: we conclude that
\begin{equation}\label{567d8f77}
\int_{1/4}^t V_r^{\frac{n-\sigma}{n}} \,dr
\le C\,\int_0^t \left[ \int_{\rho}^t
a(\rho)\,(r-\rho)^{-\sigma}\,dr \right]\,d\rho
\le C\,t^{1-\sigma}\int_0^t 
a(\rho)\,d\rho \le C\,V_t,
\end{equation}
again up to renaming the constants.
Now we iterate this estimate by setting, for any~$k\ge2$,
$$ t_k:=\frac14+\frac1{2^k} \ {\mbox{ and }} \
v_k:=V_{t_k}.$$
Since~$V_r$ is monotone in~$r$, we have that
$$ \int_{1/4}^{t_k} V_r^{\frac{n-\sigma}{n}} \,dr\ge
\int_{t_{k+1}}^{t_k} V_r^{\frac{n-\sigma}{n}} \,dr
\ge V_{t_{k+1}}^{\frac{n-\sigma}{n}} \,(t_k-t_{k+1})
= 2^{-(k+1)}\,v_{k+1}^{\frac{n-\sigma}{n}} .$$
Hence, if we write~\eqref{567d8f77} with~$t:=t_k$
we obtain that
$$ v_{k+1}^{\frac{n-\sigma}{n}}
\le C^k \,v_k,$$
up to renaming the constants.
Also, by~\eqref{ABCD},
$v_2<\delta$, which is assumed to be conveniently small.
Then, it is easy to see that~$v_k\le C \eta^k$,
for some~$C>0$ and~$\eta\in(0,1)$ (see e.g. formula~(8.18)
in~\cite{DFV-dislocation}) and so
\begin{equation}\label{78909991}
0=\lim_{k\rightarrow+\infty} v_k = V_{1/4}.\end{equation}
As a consequence, $|B_{1/4}\setminus E|=0$, which
is in contradiction with the fact that~$0\in\partial E$
(in the measure theoretic sense) and so it establishes~\eqref{CD-3}.

Now we show the validity of~\eqref{CD-4}. To this scope,
we take~$r=3/4$ in~\eqref{A678ddd}
and we consider the
$s$-harmonic replacement of~$u$ in~$E\cup B_{3/4}=E\cup A$,
according to Definition~1.1
in~\cite{DV}
(notice that the replacement considered in~\cite{DV}
is defined in a setting different than the one introduced
in Section~\ref{FHR} in this paper; as a matter of
fact, the framework introduced in Section~\ref{FHR}
only plays a role in the forthcoming Subsection~\ref{2side}). Namely,
we define~$u_\star$ the function that
minimizes the fractional Dirichlet energy
$$ \iint_{Q_{B_1}}\frac{|v(x)-v(y)|^2}{|x-y|^{n+2s}}
\, dx\, dy$$
among all the functions~$v:\R^n\rightarrow\R$ such that~$v-u\in L^2(\R^n)$,
$v=u$ a.e. in~$\R^n\setminus B_1$ and~$v=0$ a.e. in~$(\R^n\setminus E)\setminus B_{3/4}=\R^n\setminus (E\cup A)$.

The existence (and, as a matter of fact,
uniqueness) of such~$u_\star$ is ensured by Lemma~2.1 of~\cite{DV}.

We set~$\psi:=u_\star-u$. Notice that~$\psi=0$ a.e. in~$(\R^n\setminus B_1) \cup (
\R^n\setminus (E\cup A))$.
Hence, by formula~(2.8) of~\cite{DV} (applied here with~$K:=
\R^n\setminus (E\cup A)$),
\begin{equation}\label{fe7d}
\begin{split}
& \iint_{Q_{B_1}}\frac{|u(x)-u(y)|^2}{|x-y|^{n+2s}}
\, dx\, dy -
\iint_{Q_{B_1}}\frac{|u_\star(x)-u_\star(y)|^2}{|x-y|^{n+2s}}
\, dx\, dy \\ &\qquad\qquad=
\iint_{Q_{B_1}}\frac{|\psi(x)-\psi(y)|^2}{|x-y|^{n+2s}}
\, dx\, dy.
\end{split}\end{equation}
Also
\begin{equation}\label{6f7g}{\mbox{$u_\star\ge 0$ a.e.,}}
\end{equation} thanks to~\eqref{tyud}
and Lemma~2.4 in~\cite{DV}. So,
since~$u_\star=0$ a.e. in~$
\R^n\setminus(E\cup A)$, we see that the
pair~$(u_\star, E\cup A)$ is admissible.

Therefore, by the minimality of~$(u,E)$, we have that
$${\mathcal{F}}_{B_1}(u,E)
\le {\mathcal{F}}_{B_1}(u_\star, E\cup A).$$
This and~\eqref{fe7d} give that
\begin{equation}\label{Ae1}
\begin{split}
& \iint_{Q_{B_1}}\frac{|\psi(x)-\psi(y)|^2}{|x-y|^{n+2s}}
\, dx\, dy \\
=\,&
\iint_{Q_{B_1}}\frac{|u(x)-u(y)|^2}{|x-y|^{n+2s}}
\, dx\, dy -
\iint_{Q_{B_1}}\frac{|u_\star(x)- u_\star(y)|^2}{|x-y|^{n+2s}}
\, dx\, dy \\
=\,& {\mathcal{F}}_{B_1}(u,E)-{\mathcal{F}}_{B_1}(u_\star, E\cup A)+
\Per_\sigma\left(E\cup A,B_1\right)
-\Per_\sigma\left(E,B_1\right) \\
\le \,& \Per_\sigma\left(E\cup A,B_1\right)
-\Per_\sigma\left(E,B_1\right).
\end{split}\end{equation}
Now we recall that~$(-\Delta)^s u_\star=0$ in~$B_{3/4}\subseteq E\cup A$,
due to Lemma~2.3 in~\cite{DV}.
Therefore, recalling~\eqref{6f7g}, we can use
the fractional Harnack inequality (see e.g. Theorem~2.1
in~\cite{kassmann-anewform-CRAS}) and obtain that
\begin{equation}\label{HK}
\sup_{{B_{1/2}}} u_\star \le C \inf_{{B_{1/2}}} u_\star.
\end{equation}
Now we 
claim that
\begin{equation}\label{678ddd}
\|u_\star\|_{L^2(B_{1/2}\setminus E)}^2
\ge c_0 \Big( \sup_{B_{1/2}}u_\star\Big)^2,
\end{equation}
for some~$c_0>0$. To prove this, 
we use~\eqref{HK} to see that
\begin{eqnarray*}
&& \|u_\star\|_{L^2(B_{1/2}\setminus E)}^2 =
\int_{B_{1/2}\setminus E} u_\star^2\,dx
\\ &&\qquad\ge \Big( \inf_{B_{1/2}} u_\star\Big)^2\, |B_{1/2}\setminus E|
\ge \Big( C^{-1} \sup_{{B_{1/2}}} u_\star \Big)^2\, |B_{1/2}\setminus E|
\end{eqnarray*}
and this proves~\eqref{678ddd}, thanks to~\eqref{CD-3}.

Furthermore, since~$\psi=0$ a.e. in~$\R^n\setminus B_1$, we have that
\begin{equation}\label{5678dddpp}\begin{split}
\iint_{\R^{2n}}\frac{|\psi(x)-\psi(y)|^2}{|x-y|^{n+2s}}\,dx\,dy
\,& \ge 
\int_{B_{3/4}} \left[\int_{\R^n\setminus B_1}
\frac{|\psi(x)-\psi(y)|^2}{|x-y|^{n+2s}} dy \right]\,dx
\\ & =
\int_{B_{3/4}} \left[\int_{\R^n\setminus B_1}
\frac{|\psi(x)|^2}{|x-y|^{n+2s}} dy \right]\,dx
\\ & \ge \int_{B_{3/4}} \left[\int_{\R^n\setminus B_{2}}
\frac{|\psi(x)|^2}{|z|^{n+2s}} dz \right]\,dx
\\ &=c \,\|\psi\|_{L^2(B_{3/4})}^2,
\end{split}
\end{equation}
for some~$c>0$.
Then, since~$\psi=u_\star$ in~$B_{1/2}\setminus E$,
we deduce from~\eqref{678ddd} and~\eqref{5678dddpp} that
\begin{equation}\label{Ae0}
\begin{split}
\Big( \sup_{B_{1/2}}u_\star\Big)^2 \,&\le c_0^{-1}
\|\psi\|_{L^2(B_{1/2}\setminus E)}^2 
\\ &\le c_0^{-1}
\|\psi\|_{L^2(B_{3/4})}^2 
\\ &\le C 
\iint_{\R^{2n}}\frac{|\psi(x)-\psi(y)|^2}{|x-y|^{n+2s}}
\, dx\, dy\\ &= C
\iint_{Q_{B_1}}\frac{|\psi(x)-\psi(y)|^2}{|x-y|^{n+2s}}
\, dx\, dy,
\end{split}
\end{equation}
for some~$C>0$.
Now we claim that
\begin{equation}\label{Ae}
{\mbox{$u_\star\ge u$ a.e. in~$\R^n$.}}
\end{equation}
To prove it, we set~$\beta:=(u-u_\star)$
and we remark that
\begin{equation}\label{Ae-0oTYuiHJH-0oYUI}
{\mbox{$\beta^+=0$ a.e. in~$(\R^n\setminus B_1) \cup (\R^n\setminus E)$.}}\end{equation}
Thus, from formula~(2.7) in~\cite{DV}, we have that
\begin{equation}\label{Ae-0oTYuiHJH}
\iint_{Q_{B_1}}\frac{
\big( u_\star(x)-u_\star(y)\big)\,\big( \beta^+(x)-\beta^+(y)\big)
}{|x-y|^{n+2s}}
\, dx\, dy=0.
\end{equation}
Moreover, fixed~$\eps\in(0,1)$,
we define~$u_\eps:= u -\eps\beta^+$. We notice that 
\begin{equation}\label{Ae-0oTYuiHJH9iyg}
{\mbox{$u_\eps\ge0$ a.e. in~$E$.}}\end{equation}
Indeed, let~$x\in E$: if~$\beta^+(x)=0$ then~$u_\eps(x)=u(x)\ge0$
(up to negligible sets); if instead~$\beta^+(x)>0$, then~$\beta^+(x)=
u(x)-u_\star(x)$, thus~$u_\eps(x)= (1-\eps)u(x)+\eps u_\star(x)\ge0$,
thanks to~\eqref{6f7g}.
This proves~\eqref{Ae-0oTYuiHJH9iyg}.

{F}rom~\eqref{Ae-0oTYuiHJH-0oYUI}
and~\eqref{Ae-0oTYuiHJH9iyg}, we obtain that~$(u_\eps,E)$ is an admissible
competitor for~$(u,E)$, therefore, by the minimality of~$(u,E)$,
we see that
$$ \iint_{Q_{B_1}} 
\frac{
\big( u(x)-u(y)\big)\,\big( \beta^+(x)-\beta^+(y)\big)
}{|x-y|^{n+2s}}
\, dx\, dy\le0.$$
This and~\eqref{Ae-0oTYuiHJH} give that
$$\iint_{Q_{B_1}}\frac{
\big( \beta(x)-\beta(y)\big)\,\big( \beta^+(x)-\beta^+(y)\big)
}{|x-y|^{n+2s}}
\, dx\, dy\le0.$$
On the other hand (see e.g. formula~(8.10)
in~\cite{DFV-dislocation}), we have that
$$ \big( \beta(x)-\beta(y)\big)\,\big( \beta^+(x)-\beta^+(y)\big)
\ge \big|\beta^+(x)-\beta^+(y)\big|^2,$$ so we deduce that
$$\iint_{Q_{B_1}}\frac{
\big|\beta^+(x)-\beta^+(y)\big|^2
}{|x-y|^{n+2s}}
\, dx\, dy\le0.$$
This says that~$\beta^+=0$ a.e. in~$\R^n$,
which in turn implies~\eqref{Ae}.

{F}rom~\eqref{Ae0} and~\eqref{Ae} we obtain that
$$ \Big( \sup_{B_{1/2}} u\Big)^2 \le C
\iint_{Q_{B_1}}\frac{|\psi(x)-\psi(y)|^2}{|x-y|^{n+2s}}
\, dx\, dy.
$$
By plugging this into~\eqref{Ae1}
we conclude that
$$ \Big( \sup_{B_{1/2}} u\Big)^2
\le C\,\Big( \Per_\sigma\left(E\cup A,B_1\right)
-\Per_\sigma\left(E,B_1\right) \Big).$$
Hence, recalling~\eqref{78e}, we deduce that
\begin{eqnarray*}
\Big( \sup_{B_{1/2}} u\Big)^2 &\le&
C\,\Big(L(A,
\R^n\setminus (E\cup A)
)-L(A,E)\Big)\\
&\le& C\,L(A,
\R^n\setminus (E\cup A)
)\\
&\le& C\,L(B_{3/4},\R^n\setminus B_{3/4})\\
&\le& C,\end{eqnarray*}
up to relabeling the constants. This completes the proof of~\eqref{CD-4}.
\end{proof} 

Now, recalling the definition in \eqref{EXTENDED}, 
we prove a uniform bound also for the extension function of minimizers. 
This will play a crucial role in the proof of Lemma \ref{L--1bis}, 
in order to obtain that the constant $\delta$ does not depend 
on the quantity $\Lambda$ (see formula \eqref{LAMBDA:FIN} below). 
Indeed, differently from the ``local'' case (see \cite{CSV}), 
the energy estimate for the fractional harmonic replacement 
provided by Theorem \ref{THM 1.3} depends on the $L^{\infty}$-norm 
of the extension function, and this makes the blow-up analysis more delicate. 

More precisely, we have:

\begin{lemma}\label{L-ext}
Let~$(u,E)$ be a minimizing pair in~$B_1$, with~$u\ge0$
a.e. in~$\R^n\setminus B_1$ and~$0\in \partial E$. 
Suppose that 
\begin{equation}\label{LAMBDA:FIN}
\int_{\R^n} \frac{|u(y)|}{1+|y|^{n+2s}}\,dy\le\Lambda, 
\end{equation}
for some $\Lambda>0$. Let also $\overline{u}$ be the as in \eqref{EXTENDED}. 

Then, there exists~$K>0$, possibly depending on~$n$, $s$, and~$\sigma$
such that
\begin{equation}\label{CD-BIS}
\| \overline u \|_{L^\infty(\B_{5/9})}\le K. 
\end{equation}
\end{lemma}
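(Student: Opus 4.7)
The plan is to reduce the bound on $\overline u$ to a bound on $u$ in a slightly larger ball by invoking Lemma~\ref{Cr Lemma}, and then to obtain the latter by extending the bound from Lemma~\ref{L--1} via a fractional Harnack argument.

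First, Lemma~\ref{L--1} yields $\|u\|_{L^\infty(B_{1/2})}\le K_0$, with $K_0$ depending only on $n,s,\sigma$. Inspecting its proof (especially \eqref{678ddd} and \eqref{Ae0}), the same argument applied to the auxiliary $s$-harmonic replacement $u_\star$ of $u$ in $E\cup B_{3/4}$ also gives $\|u_\star\|_{L^\infty(B_{1/2})}\le K_0'$, since on $B_{1/2}\setminus E$ one has $u_\star=\psi$ and the $L^2$-bound there comes directly from the Dirichlet-energy/perimeter comparison. Recall also that $u_\star\ge u\ge 0$ everywhere and $(-\Delta)^s u_\star=0$ in $B_{3/4}$. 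Since $u_\star$ is globally nonnegative and $s$-harmonic in $B_{3/4}\supset\supset B_{5/9}$, the fractional Harnack inequality (e.g.\ Theorem~2.1 in~\cite{kassmann-anewform-CRAS}) gives
$$\sup_{B_{5/9}} u_\star \le C\,\inf_{B_{5/9}} u_\star \le C\,u_\star(0) \le C K_0',$$
where the middle inequality uses $0\in B_{1/2}\subset B_{5/9}$. Since $u\le u_\star$ a.e., this yields $\|u\|_{L^\infty(B_{5/9})}\le C K_0'$.

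Now applying Lemma~\ref{Cr Lemma} with $r=5/9$ gives
$$\|\overline u\|_{L^\infty(\B_{5/9})}\le C_{5/9}\!\left(\|u\|_{L^\infty(B_{5/9})}+\int_{\R^n\setminus B_{5/9}}\frac{|u(y)|}{|y|^{n+2s}}\,dy\right).$$
The tail integral is controlled by $\Lambda$: split it as $\int_{B_1\setminus B_{5/9}}+\int_{\R^n\setminus B_1}$. On the first piece, $|y|\ge 5/9$ and $1+|y|^{n+2s}\le 2$ on $B_1$, so \eqref{LAMBDA:FIN} gives $\int_{B_1}u\le 2\Lambda$ and hence a bound of the form $C\Lambda$; on the second, $1+|y|^{n+2s}\le 2|y|^{n+2s}$ on $|y|\ge 1$, so \eqref{LAMBDA:FIN} bounds it directly by $2\Lambda$. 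Combining all estimates yields $\|\overline u\|_{L^\infty(\B_{5/9})}\le K$ with $K=K(n,s,\sigma,\Lambda)$.

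The main obstacle is the enlargement of the $L^\infty$ ball from $B_{1/2}$ to $B_{5/9}$: Lemma~\ref{L--1} alone does not suffice, because Lemma~\ref{Cr Lemma} requires a bound on $u$ on a ball strictly containing the projection of $\B_{5/9}$ (which is itself $B_{1/2}$), together with some margin needed to control the Poisson kernel. The auxiliary function $u_\star$ provides this margin because its $s$-harmonicity on $B_{3/4}$ allows the nonlocal Harnack inequality to transfer pointwise control from $B_{1/2}$ to $B_{5/9}$; this step crucially relies on the global nonnegativity of $u_\star$, which is what forces the one-phase assumption $u\ge 0$ outside $B_1$ to enter the argument.
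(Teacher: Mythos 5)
Your intermediate steps are sound (the Harnack enlargement of the $L^\infty$ bound from $B_{1/2}$ to $B_{5/9}$ using $u_\star$, and the estimate of the tail integral in Lemma~\ref{Cr Lemma}), but the final bound you obtain carries a dependence on $\Lambda$, and that is precisely what the lemma forbids: the statement asserts $K=K(n,s,\sigma)$, and the paper stresses immediately after the lemma that \emph{``the quantity $K$ in Lemma \ref{L-ext} does not depend on $\Lambda$.''} This is not cosmetic. The whole reason this lemma is stated in terms of the extension $\overline u$, and proved via the extended-variable machinery, is to feed a $\Lambda$-free constant into Theorem~\ref{THM 1.3} inside the proof of Lemma~\ref{L--1bis}, so that the density constant $\delta$ is $\Lambda$-free; otherwise, in the rescaling arguments (e.g.\ Corollary~\ref{67dfffsjjsj-BIS} and the rescaling $u_r$ in the proof of Theorem~\ref{DEC}) the growth constant $\Lambda_r$ of the rescaled pair depends on the scale $r$, and a $\Lambda$-dependent $\delta$ would degenerate as $r\to 0$.

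The gap is structural and cannot be patched within your route: passing through Lemma~\ref{Cr Lemma} means estimating $\overline u$ from the Poisson kernel, which unavoidably produces the tail term $\int_{\R^n\setminus B_r}|u(y)|/|y|^{n+2s}\,dy$. Minimality of $(u,E)$ in $B_1$ puts no constraint on $u$ outside $B_1$, so this tail admits no bound beyond $\Lambda$. The paper's proof sidesteps this entirely: it never represents $\overline u$ via the Poisson formula, but instead introduces the extended-variable replacement $\tilde w=\Phi^{\overline u}_{B_{9/10}\setminus(E\cup A)}$ and works with the difference $\Psi=\tilde w-\overline u$. The orthogonality identity \eqref{0PouyghjYU-2} converts the minimality-plus-perimeter comparison into the \emph{local} bound $\int_{\B_1}|z|^a|\nabla\Psi|^2\,dX\le C$ with a universal $C$ (the right-hand side is driven only by $L(B_{3/4},\R^n\setminus B_{3/4})$, not by any tail of $u$). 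A weighted Sobolev inequality then controls $\inf_{B_{1/2}\setminus E}|\tilde w|$ (using the one-sided density estimate \eqref{CD-3}), the degenerate Harnack inequality upgrades this to $\sup_{\B_{5/9}}\tilde w\le C$, and the comparison $\tilde w\ge\overline u$ closes the argument. At no point does $\Lambda$ enter. So the correct proof requires the Section~\ref{FHR} replacement, not the Poisson-kernel Lemma~\ref{Cr Lemma}; your proposal proves only the strictly weaker statement with $K=K(n,s,\sigma,\Lambda)$.
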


\begin{proof}
The proof of \eqref{CD-BIS} is a suitable variation of the one of \eqref{CD-4}. 
The difference here is that we will consider the harmonic replacement 
that we constructed in Section \ref{FHR}. 
For this, we consider the extension function $\overline u$ 
of $u$, as defined in \eqref{EXTENDED}. 

Moreover, we set 
\begin{equation}\label{def A}
A:=B_{3/4}\setminus E 
\end{equation}
and we  observe that $E\cup B_{3/4}=E\cup A$. 
We consider the fractional harmonic replacement of~$\overline u$, as introduced in Section \ref{FHR}, 
by prescribing $B_{9/10}\setminus(E\cup A)$ as vanishing set. 
More precisely, with the notation of Theorem \ref{MIN-0}, 
we consider $\Phi^{\overline u}_{B_{9/10}\setminus (E\cup A)}$. For shortness of notation, we define 
\begin{equation}\label{qw29}
\tilde w:=\Phi^{\overline u}_{B_{9/10}\setminus (E\cup A)}.\end{equation}
Notice that $\tilde w=\overline u$ in $\B_2\setminus\B_1$, therefore (up to extending $\tilde w$ 
outside $\B_2$) we can say that 
\begin{equation}\label{qw30}
\tilde w = \overline u \quad {\mbox{ in }} \R^{n+1}\setminus \B_1.
\end{equation}
Notice also that 
\begin{equation}\label{qw31}
\tilde w \ge 0 \quad {\mbox{ in }}\R^{n+1}. 
\end{equation}
Indeed, $u\ge 0$ thanks to Lemma 3.3 in \cite{DSV2}, 
and so $\overline u\ge0$, in view of \eqref{EXTENDED}. 
Therefore \eqref{qw31} follows from Lemma \ref{MPL-2}. 

Now we set 
\begin{equation}\label{basta}
F:=E\cup A
\end{equation}
and we claim that 
\begin{equation}\label{qw32}
\tilde w(x,0)\ge 0 {\mbox{ a.e. }}x\in F, {\mbox{ and }} \tilde w(x,0)\le0 {\mbox{ a.e. }} x\in\R^n\setminus F. 
\end{equation}
For this, we first observe that we only need to prove that $\tilde w(x,0)=0$ a.e. $x\in\R^n\setminus F$, 
thanks to \eqref{qw31}. Now, if $x\in B_{9/10}\setminus F$ then $\tilde w(x,0)=0$ 
by the definition of harmonic replacement in \eqref{qw29}. 
Moreover, if $x\in \big(\R^n\setminus B_{9/10}\big)\setminus F$ then $(x,0)\in\R^{n+1}\setminus\B_1$ 
(recall \eqref{90GHJ}), and so, by \eqref{qw30}, we have that 
$\tilde w(x,0)=\overline u(x,0) =u(x)=0$, since $x\in\R^n\setminus E$. 
This shows \eqref{qw32}. 

Now we define~${\mathcal{U}}:=\B_{\frac{11}{10}}$ and we observe that 
$$ {\mathcal{U}}\cap \{z=0\} = B_{\frac{99}{100}}\times \{0\} \subset B_1\times \{0\},$$
thanks to \eqref{90GHJ}. This, \eqref{basta} and \eqref{qw32} imply that the assumptions 
of Lemma \ref{EXT-L} are satisfied (with $r:=1$), and so 
$$ \int_{{\mathcal{U}}} |z|^a|\nabla \overline u|^2\,dX + \Per_\sigma(E,B_1) 
\le \int_{{\mathcal{U}}} |z|^a|\nabla \tilde w|^2\,dX + \Per_\sigma(F,B_1).$$
Recalling \eqref{qw30}, we can rewrite the formula above as 
\begin{equation}\label{qw45}
\int_{\B_1} |z|^a|\nabla \overline u|^2\,dX + \Per_\sigma(E,B_1) 
\le \int_{\B_1} |z|^a|\nabla \tilde w|^2\,dX + \Per_\sigma(F,B_1).\end{equation}
Now we observe that, by \eqref{oiU67jsdfgH}, 
$$ \Per_\sigma\left(E\cup A,B_1\right)-\Per_\sigma\left(E,B_1\right)=
L(A,\R^n\setminus (E\cup A))-L(A,E), $$ 
therefore, recalling \eqref{basta}, we have that 
\begin{eqnarray*}
&& \Per_\sigma\left(F,B_1\right)-\Per_\sigma\left(E,B_1\right)=
L(A,\R^n\setminus (E\cup A))-L(A,E)\\
&&\qquad \le L(A,\R^n\setminus (E\cup A)) \le L(B_{3/4}, \R^n\setminus B_{3/4}) \le C,
\end{eqnarray*}
for some $C>0$. From this and \eqref{qw45}, we conclude that 
\begin{equation}\label{qw46}
\int_{\B_1} |z|^a|\nabla \overline u|^2\,dX - \int_{\B_1} |z|^a|\nabla \tilde w|^2\,dX\le C.  
\end{equation}

Now we set $\Psi:=\tilde w-\overline u$. Notice that 
\begin{equation}\label{we32}
{\mbox{$\Psi=0$ in $\R^{n+1}\setminus\B_1$,}}\end{equation} 
thanks to \eqref{qw30}. Moreover, if $x\in B_{9/10}\setminus (E\cup A)$ then (in the sense of traces)
$$ \Psi(x,0)=\tilde w(x,0) -\overline u(x,0) = -\overline u(x,0) =-u(x)=0,$$
since $x\in \R^{n}\setminus E$ (recall also the definition of fractional harmonic replacement
in \eqref{qw29}). Hence\footnote{The careful reader may have noticed that $K$
here recalls the set notation in the fractional harmonic
replacement framework and of course cannot be confused
with the constant~$K$ in \eqref{CD-BIS}.}, 
$\Psi\in\mathcal{D}^0_{K}$, where $K:=B_{9/10}\setminus (E\cup A)$. 
Therefore, \eqref{0PouyghjYU-2} gives that 
$$ \int_{\B_1}|z|^a|\nabla \overline u|^2\,dX = 
\int_{\B_1}|z|^a|\nabla \tilde w|^2\,dX + \int_{\B_1}|z|^a|\nabla \Psi|^2\,dX.$$ 
Plugging this information into \eqref{qw46}, we obtain
\begin{equation}\label{uytr65}
\int_{\B_1}|z|^a|\nabla \Psi|^2\,dX\le C.
\end{equation}

Also, from \eqref{we32} and Lemma \ref{EVE} (recall that $\overline u$ is even in $z$ by definition), 
we have 
$$ \int_{\B_1}|z|^a|\nabla \Psi|^2\,dX = \int_{\R^{n+1}}|z|^a|\nabla \Psi|^2\,dX
= 2 \int_{\R^{n+1}_+}|z|^a|\nabla \Psi|^2\,dX,$$
where $\R^{n+1}_+:=\R^{n+1}\cap \{z>0\}$. So we set $2^*_s:= \frac{2n}{n-2s}$ and we
use Proposition 1.2.1 in \cite{maria}, obtaining that 
\begin{equation}\begin{split}\label{we129}
& \int_{\B_1}|z|^a|\nabla \Psi|^2\,dX \ge C\left(\int_{\R^n}|\Psi(x,0)|^{2^*_s}\,dx\right)^{2/2^*_s}\\
&\quad \ge C\left(\int_{B_{1/2}\setminus E}|\Psi(x,0)|^{2^*_s}\,dx\right)^{2/2^*_s}
 \ge |B_{1/2}\setminus E|^{ \frac{2}{2^*_s}-1 }
\int_{B_{1/2}\setminus E}|\Psi(x,0)|^{2}\,dx \\ &\quad
\ge | B_{1/2}\setminus E |^{ \frac{2}{2^*_s} }
\left( \inf_{B_{1/2}\setminus E}|\Psi|\right)^2
\ge C\left( \inf_{B_{1/2}\setminus E}|\Psi|\right)^2,
\end{split}\end{equation}
for some $C>0$, thanks to \eqref{CD-3} 
(notice that the H\"older inequality was also used). 

Now we notice that, if $x\in B_{1/2}\setminus E$, then $\Psi(x,0)=\tilde w(x,0)-\overline u(x,0)=
\tilde w(x,0)-u(x)=\tilde w(x,0)$ in the sense of traces. 
Using this information into \eqref{we129} we get 
\begin{equation}\label{wer34}
\int_{\B_1}|z|^a|\nabla \Psi|^2\,dX \ge C \left( \inf_{B_{1/2}\setminus E}|\tilde w|\right)^2.
\end{equation}

Now we observe that $\B_{5/9}$ is compactly contained in $\B_1\setminus K$, 
where $K= B_{9/10}\setminus (E\cup A)$. Indeed, by \eqref{90GHJ} and \eqref{def A}, 
\begin{eqnarray*}
&& \B_{5/9}= B_{\frac{1}{2}} \times \left(-\frac{5}{9},\frac{5}{9}\right) \Subset
B_{\frac{3}{4}} \times \left(-1,1\right)\\
&&\qquad \subset \left(\big(E\cup A\big)\cap B_{9/10}\right) \times (-1,1)
\subset \B_1\setminus K.\end{eqnarray*}
Also, Lemma \ref{HARM:0p} says that 
$$ {\rm div}\, (|z|^a \nabla\tilde w)=0 $$
in the interior of~$\B_1\setminus K$, in the distributional sense. 
Therefore (recalling also \eqref{qw31}) we can use the Harnack inequality (see e.g. Theorem 2.3.8 
in \cite{FKS}) and we obtain that 
$$ \sup_{\B_{5/9}} \tilde w\le C\inf_{\B_{5/9}}\tilde w,$$
for some constant $C>0$ independent of $\tilde w$. 
This, together with \eqref{wer34} and \eqref{qw31}, gives that 
$$ \int_{\B_1}|z|^a|\nabla \Psi|^2\,dX \ge  C \left( \inf_{B_{1/2}\setminus E}\tilde w\right)^2
\ge C \left( \inf_{\B_{5/9}}\tilde w\right)^2
\ge C \left( \sup_{\B_{5/9}}\tilde w\right)^2, 
$$ 
up to relabelling $C$. From this and \eqref{uytr65} we obtain that 
\begin{equation}\label{uytr65-1}
\sup_{\B_{5/9}}\tilde w \le C,
\end{equation}
for some $C>0$, depending only on $n$, $s$ and $\sigma$. 

Now we claim that 
\begin{equation}\label{uytr65-2}
\tilde w\ge \overline u \quad {\mbox{ in }} \R^{n+1}.
\end{equation}
To prove this, we set $\beta:=\overline u-\tilde w$, 
and we observe that $\beta^+=0$ in $\R^{n+1}\setminus\B_1$, due to \eqref{qw30}. 
Furthermore, if $x\in \R^n\setminus \left(B_{9/10}\setminus (E\cup A)\right)$, 
then $\beta^+(x,0)=\left( \overline u(x,0)-\tilde w(x,0)\right)^+ =0$ in the sense of traces. 
Therefore, by \eqref{0PouyghjYU-1},
\begin{equation}\label{gfdtr43}
\int_{\B_1}|z|^a \nabla\tilde w \cdot\nabla\beta^+\,dX=0.
\end{equation}
Now we fix $\epsilon\in(0,1)$ and we define $\overline u_\epsilon:=\overline u-\epsilon\beta^+$. 
Notice that 
\begin{equation}\label{90op}
\overline u_\epsilon =\overline u \quad {\mbox{ in }}\R^{n+1}\setminus \B_1. 
\end{equation}
Also, we observe that 
\begin{equation}\label{pos amm}
\overline u_\epsilon\ge 0.
\end{equation}
Indeed, if $\beta^+=0$ then $\overline u_\epsilon =\overline u\ge 0$; 
if instead $\beta^+>0$ then $\overline u_\epsilon =(1-\epsilon)\overline u +\tilde w\ge 0$. 
This proves \eqref{pos amm}. 

Moreover, if $x\in\R^n\setminus E$, 
$$ \overline u_\epsilon(x,0) = \overline u(x,0) -\epsilon\left(\overline u(x,0)-\tilde w(x,0)\right)^+ 
= -\epsilon \left(-\tilde w(x,0)\right)^+ =0,$$ 
thanks to \eqref{qw31}. This, \eqref{90op} and \eqref{pos amm} 
imply that the assumptions of Lemma \ref{EXT-L} are satisfied with $\mathcal{U}:=\B_{\frac{11}{20}}$, 
and so 
$$ \int_{\mathcal{U}}|z|^a|\nabla \overline u|^2\,dX\le 
\int_{\mathcal{U}}|z|^a|\nabla \overline u_\epsilon|^2\,dX. $$
Using \eqref{90op}, we can write 
$$ \int_{\B_1}|z|^a|\nabla \overline u|^2\,dX\le 
\int_{\B_1}|z|^a|\nabla \overline u_\epsilon|^2\,dX, $$
which, recalling the definition of $\overline u_\epsilon$, implies that
$$ \int_{\B_1}|z|^a\nabla \overline u\cdot\nabla\beta^+\,dX\le 0. $$
This and \eqref{gfdtr43} give that 
$$ \int_{\B_1}|z|^a|\nabla\beta^+|^2\,dX=
\int_{\B_1}|z|^a\nabla \beta\cdot\nabla\beta^+\,dX\le 0. $$
Therefore, we have that $\beta^+=0$ in $\B_1$. This and \eqref{qw30} imply \eqref{uytr65-2}. 

From \eqref{uytr65-1} and \eqref{uytr65-2} we obtain that 
$$ \sup_{\B_{5/9}}\overline u \le C,$$
for some $C>0$, possibly depending on $n$, $s$ and $\sigma$. This shows \eqref{CD-BIS}. 
\end{proof}

We remark that the finiteness of~$\Lambda$ in~\eqref{LAMBDA:FIN} 
is only used in order to have a well-defined extended function~$\overline u$. 
In particular, the quantity~$K$ in Lemma \ref{L-ext} does not depend on~$\Lambda$.

\subsection{Density estimates from the other side}\label{2side}

In Lemma~\ref{L--1}, a density estimate from one side
was obtained, namely we proved that the complement of~$E$
has positive density near the free boundary.

The purpose of this subsection is to prove that also the set~$E$
has positive density near the free boundary.

To this goal, we need to modify appropriately the argument in
Lemma~\ref{L--1}, by using
the machinery developed in the previous sections.
With respect to the argument developed in
the proof of Lemma~\ref{L--1}, in this subsection
the sets in~\eqref{A678ddd-0}
and~\eqref{A678ddd} are replaced by the similar
quantities in which the intersection with~$E$
(rather than with the complement of~$E$)
is taken into account (see \eqref{0oGJKio0}
and~\eqref{0oGJKio0-bis}
below).

This apparently minor difference causes
a conceptual difficulty
in terms of harmonic replacements: indeed, in the proof
of Lemma~\ref{L--1}, the competitor was built
by extending the positivity set of the minimizer~$u$,
while, in the case considered here, the positivity set gets reduced
in the competitor, i.e. the competitor is forced to attain zero value
on a larger set, and this makes its Dirichlet energy
possibly bigger. For this reason, one needs to
estimate the error in the Dirichlet energy
produced by this further constrain on the zero set.
This is the point in which Theorem~\ref{THM 1.3} and Lemma~\ref{L-ext} 
come into play. Indeed, for this estimate, we need
to control the energy difference with a term only
involving the measure of the additional zero set
and the local size of the data
(this is the reason for introducing the fractional
harmonic replacement in the extended variables
in Section~\ref{FHR}
and for considering the extended problem in Lemma~\ref{EXT-L}).

\begin{lemma}\label{L--1bis} Let~$(u,E)$ be a minimizing pair in~$B_2$, 
with~$u\ge0$ a.e. in~$\R^n\setminus B_2$. Suppose that 
$$ \int_{\R^n} \frac{|u(y)|}{1+|y|^{n+2s}}\,dy \le \Lambda,$$
for some~$\Lambda>0$.

Assume also that~$0\in\partial E$.

Then, there exists~$\delta>0$, possibly depending on~$n$, $s$ and~$\sigma$ such that
$$ |B_{1/2}\cap E|\ge\delta.$$
\end{lemma}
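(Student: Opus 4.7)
The plan is to argue by contradiction, mirroring Lemma~\ref{L--1} with the roles of $E$ and $\R^n\setminus E$ swapped. The crucial new difficulty is that, whereas enlarging $E$ in the competitor of Lemma~\ref{L--1} was energetically free, shrinking $E$ now forces $u$ to vanish on an additional set, increasing the nonlocal Dirichlet energy. To control this excess by a linear function of the added measure, I will build the competitor via the fractional harmonic replacement in the extended variables from Section~\ref{FHR}, so that the sharp estimate of Theorem~\ref{THM 1.3} is available.

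Concretely, I would assume by contradiction that $V_{1/2}:=|B_{1/2}\cap E|<\delta$ for $\delta$ small to be chosen, and for $r\in[1/4,1/2]$ set $A_r:=B_r\cap E$, $V_r:=|A_r|$ and $a(r):={\mathcal H}^{n-1}(\partial B_r\cap E)$. By Lemma~3.3 of~\cite{DSV2} one has $u\ge 0$ everywhere, hence $u=0$ a.e.\ on $\R^n\setminus E$; consequently $\overline u$ has vanishing trace on $K_0:=B_{9/10}\setminus E$, and in particular $\overline u\in\D^{\overline u}_{K_0}$. I then let $\Phi:=\Phi^{\overline u}_{K_0\cup A_r}$ (Theorem~\ref{MIN-0}) and define an extended competitor by $\tilde v:=\Phi$ in $\B_1$, $\tilde v:=\overline u$ outside. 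Its trace $v:=T_{\tilde v}$ is nonnegative (Lemma~\ref{MPL-2}), vanishes on $K_0\cup A_r$ by construction, and coincides with $u$ on $\R^n\setminus B_{9/10}$; a short case check then shows that $(v,E\setminus A_r)$ is an admissible competitor for $(u,E)$ in $B_2$.

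Combining the minimality with the variational representation of the nonlocal Dirichlet difference via extended energies (Lemma~7.2 of~\cite{CRS}, as in the proof of Lemma~\ref{EXT-L}), I get
$$\Per_\sigma(E,B_2)-\Per_\sigma(E\setminus A_r,B_2)\le \E(\Phi)-\E(\overline u)\le \E(\Phi^{\overline u}_{K_0\cup A_r})-\E(\Phi^{\overline u}_{K_0}),$$
where the last step uses $\E(\overline u)\ge\E(\Phi^{\overline u}_{K_0})$ since $\overline u\in\D^{\overline u}_{K_0}$. Theorem~\ref{THM 1.3} (applied with $K:=K_0$, $\rho:=r$, $\varphi:=\overline u$) then bounds this by $CV_r\|\overline u\|_{L^\infty(\B_1)}^2$, and a rescaling $(u,E)\mapsto(2^{-(s-\sigma/2)}u(2\cdot),E/2)$, which reduces the minimization from $B_2$ to $B_1$, combined with Lemma~\ref{L-ext}, gives $\|\overline u\|_{L^\infty(\B_1)}\le K$ for a constant $K$ depending only on $n,s,\sigma$. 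On the other hand, since $A_r\subseteq E$, the decomposition~\eqref{oiU67jsdfgH} yields
$$\Per_\sigma(E,B_2)-\Per_\sigma(E\setminus A_r,B_2)=L(A_r,\R^n\setminus E)-L(A_r,E\setminus A_r),$$
and since $L(A_r,\R^n\setminus A_r)=L(A_r,\R^n\setminus E)+L(A_r,E\setminus A_r)$ with $E\setminus A_r\subseteq\R^n\setminus B_r$, I deduce
$$L(A_r,\R^n\setminus A_r)\le 2L(A_r,\R^n\setminus B_r)+CK^2V_r.$$
From here the proof runs exactly parallel to that of Lemma~\ref{L--1}: the fractional Sobolev inequality gives $V_r^{(n-\sigma)/n}\le C\,L(A_r,\R^n\setminus A_r)$; the pointwise bound $L(A_r,\R^n\setminus B_r)\le C\int_0^r a(\rho)(r-\rho)^{-\sigma}\,d\rho$ and integration in $r\in[1/4,t]$ yield $\int_{1/4}^t V_r^{(n-\sigma)/n}\,dr\le CV_t$ (the extra $CK^2V_r$ term being absorbed using $V_r\le\delta^{\sigma/n}V_r^{(n-\sigma)/n}$ upon shrinking $\delta$); the dyadic iteration on $t_k:=1/4+2^{-k}$ then forces $V_{1/4}=0$, contradicting $0\in\partial E$.

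The main obstacle is the control $\E(\Phi)-\E(\overline u)\le CK^2V_r$ in the third paragraph: the bound must be linear in $|A_r|$, for otherwise the Sobolev iteration fails. Theorem~\ref{THM 1.3} delivers precisely this sharp estimate, but its application here requires both the comparison $\E(\overline u)\ge\E(\Phi^{\overline u}_{K_0})$ (hence the need for the fractional harmonic replacement in the extended variables developed in Section~\ref{FHR}) and the uniform $L^\infty$-bound of Lemma~\ref{L-ext}, used via the above rescaling so that the resulting constant is independent of the tail parameter~$\Lambda$.
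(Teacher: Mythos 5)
Your proof takes essentially the same route as the paper: argue by contradiction, build the competitor via the fractional harmonic replacement $\Phi^{\overline u}_{(B_{9/10}\setminus E)\cup A_r}$ in the extended variables, pass through Lemma~\ref{EXT-L} and the minimality comparison $\E(\overline u)\ge\E(\Phi^{\overline u}_{B_{9/10}\setminus E})$, bound the energy excess linearly in $|A_r|$ via Theorem~\ref{THM 1.3} together with the $L^\infty$ bound on $\overline u$, and then run the Sobolev/dyadic iteration to force $V_{1/4}=0$. You are in fact slightly more explicit than the paper in two spots: the paper invokes Lemma~\ref{L-ext} directly for $\|\overline u\|_{L^\infty(\B_1)}$ although that lemma only yields a bound on $\B_{5/9}$ (you supply the missing rescaling $(u,E)\mapsto(2^{\sigma/2-s}u(2\cdot),E/2)$, legitimate since the pair is minimizing in $B_2$), and the paper defers the final absorption step to \cite{CSV} whereas you carry it out, noting that the extra $CK^2V_r$ term is absorbed via $V_r\le\delta^{\sigma/n}V_r^{(n-\sigma)/n}$ after shrinking $\delta$.
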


\begin{proof}
First of all, we notice that~$u\ge0$ in the whole of~$\R^n$,
thanks to Lemma~3.3 of~\cite{DSV2}. Also,
for any~$r\in [1/4,3/4]$, we define
\begin{equation}\label{0oGJKio0}
V_r:=|B_r\cap E| \;{\mbox{ and }}\;
a(r):={\mathcal{H}}^{n-1}\big((\partial B_r)\cap E\big).\end{equation}
The desired result will follow by arguing by contradiction.
Suppose that the desired result does not hold.
Then~$V_{1/2}<\delta$. We will find a contradiction by taking~$\delta$
conveniently small. To this goal, we set
\begin{equation}\label{0oGJKio0-bis}
A:=B_r\cap E.\end{equation}
We let~$\overline u:\R^{n+1}\to\R$ be the extension of~$u$,
according to \eqref{EXTENDED}.

We consider the fractional harmonic replacement of~$\overline u$, as introduced
in Section~\ref{FHR}, prescribing~$(B_{9/10}\setminus E)\cup A$
as supporting sets, i.e., in the notation
of Theorem~\ref{MIN-0}, we
consider~$\Phi^{\overline u}_{(B_{9/10}\setminus E)\cup A}$,
and we define, for short,
\begin{equation}\label{89UhgfdFG}
\tilde v := \Phi^{\overline u}_{(B_{9/10}\setminus E)\cup A}
.\end{equation}
Notice that~$\tilde v =\overline u$ in~$\B_2\setminus\B_1$, so up to extending~$\tilde v $
outside~$\B_2$, we can write
\begin{equation}\label{98yhFGhJKl09}
{\mbox{$\tilde v =\overline u$ in~$\R^{n+1}\setminus\B_1$.}}
\end{equation}
We also set
\begin{equation}\label{GH HJK}
F:=E\setminus A.\end{equation}
We notice that
\begin{equation}\label{GH HJK-8902}
{\mbox{$\tilde v (x,0)\ge0$ a.e. $x\in F$, and
$\tilde v (x,0)\le0$ a.e. $x\in \R^n\setminus F$.}}\end{equation}
Indeed, $u\ge0$ due to Lemma~3.3 in~\cite{DSV2},
hence~$\overline u\ge0$, in view of~\eqref{EXTENDED}.
Therefore~$\tilde v \ge0$, thanks to Lemma~\ref{MPL-2}.
As a consequence~$\tilde v (x,0)\ge0$ 
in the trace sense.
So it only remains to prove that~$\tilde v (x,0)=0$ a.e. $x\in \R^n\setminus F$.
For this, notice that
$$ \R^n\setminus F =
\R^n\setminus (E\setminus A)
=(\R^n\setminus E)\cup A.$$
So, if~$x\in (B_{9/10}\setminus E)\cup A$,
we have that~$\tilde v (x,0)=0$
by definition of fractional replacement. Also, if~$x\in (\R^n\setminus B_{9/10})\setminus E$,
then~$(x,0)\in \R^{n+1}\setminus \B_1$, due to~\eqref{90GHJ},
and so, by~\eqref{98yhFGhJKl09},
in this case we have~$\tilde v (x,0)=\overline u(x,0)=u(x)=0$,
since~$x\in\R^n\setminus E$.
This proves~\eqref{GH HJK-8902}.

Now we define~${\mathcal{U}}:= \B_{\frac{11}{10}}$
and we observe that
$$ {\mathcal{U}}\cap \{z=0\} = B_{\frac{99}{100}}\times\{0\}\subset
B_1\times\{0\},$$
due to~\eqref{90GHJ}. {F}rom this, \eqref{GH HJK} and~\eqref{GH HJK-8902},
we see that the assumptions of Lemma~\ref{EXT-L}
are satisfied (with~$r:=1$): so we obtain that
$$ \int_{\mathcal{U}} |z|^a |\nabla \overline u|^2\,dX
+\Per_\sigma(E,B_1)\le
\int_{\mathcal{U}} |z|^a |\nabla \tilde v|^2\,dX
+\Per_\sigma(F,B_1).$$
Thus, using again~\eqref{98yhFGhJKl09},
\begin{equation}\label{7uhn4rfhj5678} \int_{\B_1} |z|^a |\nabla \overline u|^2\,dX
+\Per_\sigma(E,B_1)\le
\int_{\B_1} |z|^a |\nabla \tilde v|^2\,dX
+\Per_\sigma(F,B_1).
\end{equation}
Now, by~\eqref{GH HJK},
\begin{eqnarray*}
&& \Per_\sigma(F,B_1)-
\Per_\sigma(E,B_1)=
\Per_\sigma(E\setminus A,B_1)-
\Per_\sigma(E,B_1)\\ &&\qquad=L(A, E\setminus A)-
L(A,\R^n\setminus E).\end{eqnarray*}
By inserting this information into~\eqref{7uhn4rfhj5678}
and recalling~\eqref{89UhgfdFG}
we obtain that
\begin{equation}
\label{09876678uyf}\begin{split}
&L(A,\R^n\setminus E)-L(A, E\setminus A)\le
\int_{\B_1}|z|^a |\nabla \tilde v|^2\,dX
-\int_{\B_1}|z|^a |\nabla \overline u|^2\,dX
\\&\qquad =
\E(\Phi^{\overline u}_{(B_{9/10}\setminus E)\cup A})-
\E(\overline u).\end{split}\end{equation}
On the other hand, $\overline u$ vanishes on~$(B_{9/10}\setminus E)\times\{0\}$,
thus, by the minimality of~$\Phi^{\overline u}_{B_{9/10}\setminus E}$,
we have that
$$ \E(\Phi^{\overline u}_{B_{9/10}\setminus E})\le\E(\overline u).$$
Using this inequality into~\eqref{09876678uyf} and recalling
Theorem~\ref{THM 1.3}, we obtain
\begin{equation}\label{9YTHJ}
\begin{split}
L(A,\R^n\setminus E)-L(A, E\setminus A)
\,&\le
\E(\Phi^{\overline u}_{(B_{9/10}\setminus E)\cup A})
- \E(\Phi^{\overline u}_{B_{9/10}\setminus E})
\\ &\le
C\,|A|\, \|\overline u\|_{L^\infty(\B_1)}^2.\end{split}\end{equation}
{F}rom Lemma~\ref{L-ext}, we have a uniform bound on $\| \overline u\|_{L^\infty(\B_1)}$.
This and~\eqref{9YTHJ} give
$$ L(A,\R^n\setminus E)-L(A, E\setminus A)
\le C\,|A|.$$
Then, the argument in~\cite{CSV} can be repeated verbatim
(see in particular from the first formula in display after~(3.2)
to the fifth line below~(3.4)) and one obtains that~$V_{1/4}=0$.
This contradicts the fact that~$0\in\partial E$ and so it completes
the proof of
Lemma~\ref{L--1bis}.
\end{proof}

By putting together Lemmata~\ref{L--1} and~\ref{L--1bis}
we obtain:

\begin{corollary}\label{67dfffsjjsj}
Assume that~$(u,E)$ is minimizing in~$B_1$, with~$u\ge0$
a.e. in~$\R^n\setminus B_1$ and~$0\in \partial E$. 
Suppose that 
$$ \int_{\R^n} \frac{|u(y)|}{1+|y|^{n+2s}}\,dy \le \Lambda,$$
for some~$\Lambda>0$.

Then, there exist~$\delta$, $K>0$,
possibly depending on~$n$, $s$ and~$\sigma$
such that
\begin{equation}\label{CD-3-XX}
\min\Big\{ |B_{1/2}\setminus E|, \, |B_{1/2}\cap E|\Big\} \ge \delta
\end{equation}
and
\begin{equation}\label{CD-4-XX}
\| u\|_{L^\infty(B_{1/2})}\le K.
\end{equation}
\end{corollary}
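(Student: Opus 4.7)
The plan is to deduce the corollary as a direct consequence of Lemmata~\ref{L--1} and~\ref{L--1bis}. The pair $(u,E)$ is minimizing in $B_1$, with $u\ge 0$ a.e. outside $B_1$ and $0\in \partial E$, so the hypotheses of Lemma~\ref{L--1} are satisfied verbatim (the extra integrability assumption \eqref{LAMBDA:FIN} plays no role there). That lemma immediately yields both $|B_{1/2}\setminus E|\ge \delta_1$ and the uniform bound $\|u\|_{L^\infty(B_{1/2})}\le K$, with $\delta_1,K>0$ depending only on $n$, $s$ and $\sigma$. This settles \eqref{CD-4-XX} and one half of \eqref{CD-3-XX}.

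For the reverse density estimate $|B_{1/2}\cap E|\ge \delta_2$, I would appeal to Lemma~\ref{L--1bis}. That statement is phrased for pairs minimizing in $B_2$, whereas our pair is minimizing only in $B_1$, so a short rescaling step is needed: using the homogeneity of the fractional Dirichlet energy (scaling by $\lambda^{n-2s}$) and of the fractional perimeter (scaling by $\lambda^{n-\sigma}$) under the dilation $x\mapsto \lambda x$, one introduces the rescaled pair $\tilde u(x):=\lambda^{s-\sigma/2}\,u(x/\lambda)$, $\tilde E:=\lambda E$, with $\lambda>1$ chosen so that $B_1$ rescales to a ball containing $B_2$. This rescaled pair is a minimizer of the same functional on the dilated domain, and the conditions $\tilde u\ge 0$ outside $B_2$, $0\in\partial \tilde E$ and an integrability bound analogous to \eqref{LAMBDA:FIN} (possibly with a different constant $\tilde\Lambda$) are all preserved.

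Lemma~\ref{L--1bis} applied to $(\tilde u,\tilde E)$ then produces a constant $\delta_2>0$ with $|B_{1/2}\cap \tilde E|\ge \delta_2$. The crucial point is that, in view of the remark immediately following Lemma~\ref{L-ext}, the density constant $\delta_2$ depends only on $n,s,\sigma$ and not on $\tilde\Lambda$; hence unrescaling gives a density bound for the original $E$ in a ball of universal size, which after possibly shrinking the constant yields $|B_{1/2}\cap E|\ge \delta_2$.

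Setting $\delta:=\min\{\delta_1,\delta_2\}$ then gives \eqref{CD-3-XX}, while \eqref{CD-4-XX} was obtained in the first step. The only delicate point in the argument is the rescaling: one must check that the rescaled pair is genuinely admissible and minimizing for the same functional in $B_2$, and that the universal character of the constants is preserved. This is a standard scaling computation based on the two-parameter homogeneity above, and the $\Lambda$-independence of Lemma~\ref{L-ext} is precisely what makes it possible to close the argument with constants depending only on $n$, $s$ and $\sigma$.
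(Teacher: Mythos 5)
Your proposal is correct and follows exactly the route the paper indicates: the paper states the corollary without an explicit proof, asserting that it follows ``by putting together Lemmata~\ref{L--1} and~\ref{L--1bis}.'' Lemma~\ref{L--1} directly gives \eqref{CD-4-XX} and the bound on $|B_{1/2}\setminus E|$, and your rescaling step to bridge the gap between the $B_1$ hypothesis of the corollary and the $B_2$ hypothesis of Lemma~\ref{L--1bis} is both necessary and correctly executed: the exponent $s-\sigma/2$ is the right one for the two-term homogeneity, the rescaled pair is minimizing in $B_\lambda\supseteq B_2$ for $\lambda\geq2$, the integrability condition is preserved with a $\lambda$-dependent constant $\tilde\Lambda$, and --- crucially, as you observe --- the remark after Lemma~\ref{L-ext} ensures $\delta_2$ is independent of $\tilde\Lambda$, so unrescaling loses only a fixed factor $\lambda^{-n}$, which is then absorbed by shrinking the constant. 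This matches the implicit argument the paper relies on throughout (compare the rescalings used in Corollary~\ref{67dfffsjjsj-BIS} and in the proof of Theorem~\ref{DEC}), so you have simply made explicit a step the authors left to the reader.
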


We remark that the quantity~$K$ appearing in~\eqref{CD-4-XX}
does not depend on~$\Lambda$.
This fact will allow us to rescale the picture and
deduce from~\eqref{CD-4-XX} 
a universal growth from the
free boundary, as stated in the following result:

\begin{corollary}\label{67dfffsjjsj-BIS}
Assume that~$(u,E)$ is minimizing in~$B_6$, with~$u\ge0$
a.e. in~$\R^n\setminus B_6$ and~$0\in \partial E$. 
Suppose that 
$$ \int_{\R^n} \frac{|u(y)|}{1+|y|^{n+2s}}\,dy \le \Lambda,$$
for some~$\Lambda>0$.

Also, for any $x\in B_{1}\cap E$, we define $d(x):=\dist (x,\partial E)$ 
to be the distance of a point $x$ from the free boundary. 

Then, there exists~$K>0$,
possibly depending on~$n$, $s$ and~$\sigma$
such that
\begin{equation}\label{CD-10-XX-BIS}
|u(x)|\le K (d(x))^{s-\frac{\sigma}{2}} \quad {\mbox{ for any }} x\in B_{1}\cap E.
\end{equation}
Furthermore, 
\begin{equation}\label{CD-11-XX-BIS}
|u(x)|\le K|x|^{s-\frac{\sigma}{2}} \quad {\mbox{ for any }} x\in B_{1}\cap E.
\end{equation}
\end{corollary}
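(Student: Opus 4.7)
The plan is to rescale the minimizing pair around a nearest point of the free boundary to $x$ and apply the uniform bound from Corollary~\ref{67dfffsjjsj}. More precisely, for $x\in B_1\cap E$ with $r:=d(x)>0$, one picks $y\in\partial E$ with $|x-y|=r$ (so $|y|\le|x|+r\le 2$), sets $\rho:=4r$, and considers
$$ \hat u(z):=\rho^{-(s-\sigma/2)}\,u(y+\rho z),\qquad \hat E:=\rho^{-1}(E-y).$$
The exponent $s-\sigma/2$ is forced by the fact that both terms of $\mathcal{F}$ scale homogeneously by the factor $\rho^{\sigma-n}$ under this transformation, so minimality of $(u,E)$ in $B_6$ transfers to minimality of $(\hat u,\hat E)$ in $B_{4/\rho}$. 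Since $|y|+\rho\le 2+4=6$ and $r\le 1$, this domain contains $B_1$, so $(\hat u,\hat E)$ is minimizing in $B_1$.

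I would then verify the remaining hypotheses of Corollary~\ref{67dfffsjjsj}: nonnegativity of $\hat u$ (inherited from Lemma~3.3 of \cite{DSV2} applied to $u$), the fact that $0\in\partial \hat E$ (from $y\in\partial E$), and finiteness of the rescaled growth integral
$$\int_{\R^n}\frac{|\hat u(z)|}{1+|z|^{n+2s}}\,dz \;=\;\rho^{s+\sigma/2}\int_{\R^n}\frac{|u(w)|}{\rho^{n+2s}+|w-y|^{n+2s}}\,dw,$$
which is finite once one splits the integral over $\{|w-y|\ge 1\}$ (controlled by $\Lambda$, after comparing the denominator to $1+|w|^{n+2s}$) from the one over $\{|w-y|<1\}$ (controlled by a preliminary $L^\infty$-bound for $u$ on $B_3$, itself obtained by applying Corollary~\ref{67dfffsjjsj} a single time to the dilation of $(u,E)$ at the origin by factor $6$). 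With these hypotheses in place, Corollary~\ref{67dfffsjjsj} delivers $\|\hat u\|_{L^\infty(B_{1/2})}\le K$. Evaluating at the point $z_0:=(x-y)/\rho$, which satisfies $|z_0|=1/4$ and hence lies in $B_{1/2}$, and reverting to $u$, gives $|u(x)|\le K\rho^{s-\sigma/2}=4^{s-\sigma/2}K\,d(x)^{s-\sigma/2}$, establishing~\eqref{CD-10-XX-BIS}; then~\eqref{CD-11-XX-BIS} is immediate from the inequality $d(x)\le|x-0|=|x|$.

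The main potential obstacle is that the growth constant $\hat\Lambda$ associated with $\hat u$ genuinely depends on $r$, and in fact blows up as $r\to 0$, so a naive reading of Corollary~\ref{67dfffsjjsj} would yield a constant degenerating as $x$ approaches the free boundary. The crucial point that closes the argument is precisely the remark after Corollary~\ref{67dfffsjjsj}, stating that the constant $K$ in~\eqref{CD-4-XX} does \emph{not} depend on $\Lambda$: once the growth integral is merely finite, the $L^\infty$ bound is universal. This is exactly why the earlier work to obtain a $\Lambda$-independent bound through the extended harmonic replacement of Section~\ref{FHR} and Lemma~\ref{L-ext} pays off here, and it is what converts the scaling invariance into the Hölder-type decay rate $d(x)^{s-\sigma/2}$ at the free boundary.
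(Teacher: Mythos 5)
Your argument is correct and follows essentially the same route as the paper's own proof: rescale the pair around a nearest free-boundary point $y$, apply the $\Lambda$-independent $L^\infty$ bound of Corollary~\ref{67dfffsjjsj} to the rescaled pair (which is exactly what makes the estimate survive the blow-up of the rescaled growth constant as $d(x)\to0$), and revert to $u$ to obtain the decay $d(x)^{s-\sigma/2}$. The only cosmetic differences are that you rescale by $4\,d(x)$ instead of $d(x)$ so that the bound in $B_{1/2}$ applies directly, and that you check the finiteness of the rescaled growth integral somewhat more explicitly than the paper, which simply records it as $\Lambda_{r_0}$ and relies on the remark after Corollary~\ref{67dfffsjjsj} that $K$ does not depend on $\Lambda$.
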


\begin{proof}
We fix $x_0\in B_{1}\cap E$ and we set $r_0:=d(x_0)$. Let also $p_0\in\partial E\cap \partial B_{r_0}(x_0)$ 
be such that $r_0=d(x_0)=|x_0-p_0|$. 
Notice that $r_0\le1$, since $0\in\partial E$, and so $|p_0|\le2$. 

Now, we define
\begin{equation}\label{rescaled-BIS}
u_{r_0}(x):=r_0^{\frac{\sigma}{2}-s} u(r_0x+p_0)\;{\mbox{ and }}\;
E_{r_0}:= \frac{1}{r_0}\left(E -p_0\right),
\end{equation}
and we observe that~$(u_{r_0},E_{r_0})$ is a minimizing pair in $B_{6/r_0}(p_0/r_0)$. 
Also, notice that $B_4\subset B_{6/r_0}(p_0/r_0)$. Indeed, if $x\in B_4$ then 
$$ \left|x-\frac{p_0}{r_0}\right|\le |x|+\frac{|p_0|}{r_0}\le 4+\frac{2}{r_0}\le \frac{4}{r_0}
+ \frac{2}{p_0}=\frac{6}{r_0},$$
and so $x\in B_{6/r_0}(p_0/r_0)$. Therefore, we can say that $(u_{r_0},E_{r_0})$ 
is a minimizing pair in $B_4$. Moreover, by construction, $0\in \partial E_{r_0}$. 
Furthermore, we see that
$$ \int_{\R^n} \frac{|u_{r_0}(y)|}{1+|y|^{n+2s}}\,dy = 
r_0^{s+\frac{\sigma}{2}}\int_{\R^n} \frac{|u(y)|}{r_0^{n+2s}+|y|^{n+2s}}\,dy \le \Lambda_{r_0}, $$
for some $\Lambda_{r_0}>0$ depending on $r_0$. 
So we can apply Corollary \ref{67dfffsjjsj} to $(u_{r_0},E_{r_0})$ obtaining that 
\begin{equation}\label{67uio123}
\| u_{r_0}\|_{L^\infty(B_{2})}\le K, \end{equation}
for some $K$ that depends only on $n$, $s$ and $\sigma$. 

Now we set $\omega:=\frac{p_0-x_0}{r_0}$, and we observe that $\omega\in B_2$ 
(and so $-\omega\in B_2$). 
Therefore, from this and \eqref{67uio123} we get 
$$ |u_{r_0}(-\omega)|\le K.$$
On the other hand, recalling \eqref{rescaled-BIS}, 
$$ u_{r_0}(-\omega)=r_0^{\frac{\sigma}{2}-s} u(-r_0\omega +p_0)=r_0^{\frac{\sigma}{2}-s} u(x_0).$$ 
The last two formulas imply that 
$$ |u(x_0)|\le K r_0^{s-\frac{\sigma}{2}} =K (d(x_0))^{s-\frac{\sigma}{2}}, $$
which shows \eqref{CD-10-XX-BIS}. 

Finally, \eqref{CD-11-XX-BIS} follows from \eqref{CD-10-XX-BIS} and 
the fact that $0\in\partial E$. This concludes the proof of Corollary \ref{67dfffsjjsj-BIS}. 
\end{proof}

\subsection{Completion of the proof of Theorem~\ref{DEC}}

In order to end the proof of Theorem~\ref{DEC},
we recall a H\"older continuity property for nonlocal solutions:

\begin{lemma}\label{APPR:L}
Assume that~$(-\Delta)^s u=0$ in~$B_1$, with~$u\in L^\infty(B_1)$.
Then $u\in C^1(B_{1/2})$ and
$$ \|u\|_{C^1(B_{1/2})}\le C\left(
\|u\|_{L^\infty(B_1)}+
\int_{\R^n} \frac{|u(x)|}{1+|x|^{n+2s}}\,dx
\right),$$
for some~$C>0$.
\end{lemma}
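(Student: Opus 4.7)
The plan is to exploit the explicit Poisson kernel representation for $s$-harmonic functions on a ball. Since the finiteness of $\displaystyle\int_{\R^n} |u(x)|/(1+|x|^{n+2s})\,dx$ (which is implicit via the hypothesis $u \in L^\infty(B_1)$ together with the fact that the conclusion is quantified against this tail integral) makes $(-\Delta)^s u$ well-defined, the classical Riesz formula gives, for each $r \in (0,1)$,
$$
u(x) \;=\; c_{n,s}\,(r^2-|x|^2)^s \int_{\R^n\setminus B_r} \frac{u(y)}{(|y|^2-r^2)^s\,|x-y|^n}\,dy, \qquad x\in B_r.
$$
I would take $r=3/4$. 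For $x \in \overline{B_{1/2}}$, the prefactor $(r^2-|x|^2)^s$ is smooth in $x$ and bounded between two positive constants, and the integrand is smooth in $x$ for each fixed $y$ with $|y|>r$. The whole strategy is to differentiate under the integral sign and majorize the integrand (and its $x$-derivatives) by a fixed $L^1(dy)$ function of $y$ to obtain the two-term bound of the statement.

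To do this quantitatively I would split the outside region as $\R^n\setminus B_{3/4} = (B_1\setminus B_{3/4}) \cup (\R^n\setminus B_1)$. On $B_1\setminus B_{3/4}$ the factor $|u(y)|$ is controlled by $\|u\|_{L^\infty(B_1)}$, while the remaining factor $1/((|y|^2-(3/4)^2)^s|x-y|^n)$ — and similarly its gradient in $x$ — is uniformly in $x\in B_{1/2}$ integrable in $y$: the singularity as $|y|\to(3/4)^+$ has the form $(|y|-3/4)^{-s}$ which is integrable since $s<1$, while $|x-y|\geq 1/4$ for $x\in B_{1/2}$ and $y\in B_1\setminus B_{3/4}$ (near $|x-y|=1/4$ the kernel is bounded, not singular). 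On $\R^n\setminus B_1$, for $x\in B_{1/2}$ one has $|x-y|\geq |y|/2$ and $|y|^2-(3/4)^2\geq |y|^2/4$, so the kernel and all its $x$-derivatives are pointwise dominated by $C/(1+|y|^{n+2s})$, and the contribution is at most a constant times the tail integral.

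Combining the two pieces and using the triangle inequality yields, for every $x\in B_{1/2}$,
$$
|u(x)| + |\nabla u(x)| \;\leq\; C\,\|u\|_{L^\infty(B_1)} + C\int_{\R^n}\frac{|u(y)|}{1+|y|^{n+2s}}\,dy,
$$
which is exactly the asserted estimate, with $u\in C^1(B_{1/2})$ (in fact $C^\infty$) following from the same differentiation-under-the-integral argument applied to higher derivatives.

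The only non-routine step is the first one, namely justifying the Poisson representation: one must verify that the tail integrability of $u$ assumed in the statement suffices for the integral above to represent the unique $s$-harmonic extension of the exterior data $u|_{\R^n\setminus B_{3/4}}$, and that this extension coincides with $u$ on $B_{3/4}$. This can be argued by interior approximation — truncating $u$ outside $B_R$, using the standard representation for bounded data, and passing to the limit using the pointwise kernel bound $C/(1+|y|^{n+2s})$ together with dominated convergence — or by invoking the well-known interior regularity theory for the fractional Laplacian (cf.\ Silvestre's thesis, or Caffarelli--Silvestre), where this representation is standard. Once the representation is in place, the estimates above are straightforward manipulations of explicit integrals.
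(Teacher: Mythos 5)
Your proof is correct, but it takes a genuinely different route from the one in the paper. The paper proves this lemma in two lines by citing black-box results from Caffarelli--Silvestre's approximation paper: their Theorem~2.6 gives interior $C^\alpha$ regularity, and then their Theorem~2.7 upgrades this to $C^1$ with the stated two-term estimate. You instead use the explicit Poisson kernel formula for $s$-harmonic functions on a ball, split the exterior region into $B_1\setminus B_{3/4}$ (where $\|u\|_{L^\infty(B_1)}$ controls the numerator and the kernel has only an integrable boundary singularity of order $(|y|-3/4)^{-s}$) and $\R^n\setminus B_1$ (where the kernel and its $x$-derivatives are dominated by $C(1+|y|^{n+2s})^{-1}$), and then differentiate under the integral. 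The numerical estimates you give ($|x-y|\ge 1/4$ on the first annulus, $|x-y|\ge|y|/2$ and $|y|^2-(3/4)^2\ge(7/16)|y|^2$ on the exterior) all check out. What your approach buys is a fully explicit, self-contained argument that in fact yields interior $C^\infty$ estimates, not just $C^1$; what it costs is the need to justify the Poisson representation under the mere tail-integrability hypothesis, which you correctly flag as the non-routine step and propose to handle by truncation and dominated convergence together with uniqueness for the fractional Dirichlet problem. The paper's route avoids that justification entirely by delegating to published results, at the price of being less self-contained.
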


\begin{proof}
First of all, by Theorem~2.6 of~\cite{APPR},
we know that~$u\in C^\alpha(B_{9/10})$.
Then we can apply Theorem~2.7 of~\cite{APPR}
(say, in~$B_{3/4}$) and obtain the desired result.
\end{proof}

Now we provide a rescaled version of Lemma \ref{APPR:L}. 

\begin{corollary}\label{coro:rescaled}
Assume that~$(-\Delta)^s u=0$ in~$B_t(q)$, for some $t>0$ and $q\in\R^n$, with~$u\in L^\infty(B_t(q))$.
Then $u\in C^1(B_{t/2}(q))$ and
$$ \|\nabla u\|_{L^\infty(B_{t/2}(q))}\le C\, t^{-1}\left(
\|u\|_{L^\infty(B_t(q))}+ t^{2s}\int_{\R^n\setminus B_t(q)} \frac{|u(x)|}{|x-q|^{n+2s}}\,dx
\right),$$
for some~$C>0$.
\end{corollary}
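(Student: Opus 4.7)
The plan is to reduce to Lemma~\ref{APPR:L} by a standard dilation argument. I will introduce the rescaled function
$$ v(y) := u(q+ty), \qquad y\in B_1, $$
and verify that $(-\Delta)^s v(y) = t^{2s}(-\Delta)^s u(q+ty)=0$ for $y\in B_1$, so that $v$ is $s$-harmonic in $B_1$ and belongs to $L^\infty(B_1)$ with $\|v\|_{L^\infty(B_1)} = \|u\|_{L^\infty(B_t(q))}$.

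Then I would apply Lemma~\ref{APPR:L} to $v$ to obtain
$$ \|\nabla v\|_{L^\infty(B_{1/2})} \le C\left( \|v\|_{L^\infty(B_1)} + \int_{\R^n} \frac{|v(y)|}{1+|y|^{n+2s}}\,dy \right). $$
By the chain rule, $\nabla u(q+ty) = t^{-1}\nabla v(y)$, so taking the supremum over $y\in B_{1/2}$ gives $\|\nabla u\|_{L^\infty(B_{t/2}(q))} = t^{-1}\|\nabla v\|_{L^\infty(B_{1/2})}$.

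It remains to rewrite the tail integral in terms of $u$. Via the change of variable $x=q+ty$, $dx=t^n\,dy$, one has
$$ \int_{\R^n}\frac{|v(y)|}{1+|y|^{n+2s}}\,dy = t^{2s}\int_{\R^n}\frac{|u(x)|}{t^{n+2s}+|x-q|^{n+2s}}\,dx. $$
I would split the right-hand side into integration over $B_t(q)$ and over $\R^n\setminus B_t(q)$. On $B_t(q)$, using $t^{n+2s}+|x-q|^{n+2s}\ge t^{n+2s}$ and the $L^\infty$ bound gives a contribution controlled by $C\|u\|_{L^\infty(B_t(q))}$; on $\R^n\setminus B_t(q)$, using $t^{n+2s}+|x-q|^{n+2s}\ge |x-q|^{n+2s}$ gives exactly $t^{2s}\int_{\R^n\setminus B_t(q)} |u(x)|\,|x-q|^{-n-2s}\,dx$.

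Combining these estimates and multiplying by $t^{-1}$ yields
$$ \|\nabla u\|_{L^\infty(B_{t/2}(q))} \le C\,t^{-1}\left( \|u\|_{L^\infty(B_t(q))} + t^{2s}\int_{\R^n\setminus B_t(q)}\frac{|u(x)|}{|x-q|^{n+2s}}\,dx \right), $$
as desired. There is no genuinely hard step here; the only care needed is in splitting the tail integral so that the inner-ball contribution is absorbed into the $L^\infty$ term rather than producing an artificial $t$-dependent constant.
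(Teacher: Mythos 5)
Your proof is correct and follows essentially the same route as the paper: rescale to $v(y)=u(q+ty)$, apply Lemma~\ref{APPR:L}, use the chain rule for the gradient, change variables in the tail integral, and split the resulting integral over $B_t(q)$ and its complement exactly as the paper does. No meaningful differences.
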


\begin{proof}
For any $x\in B_1$, we define $v(x):=u(tx+q)$. Notice that, by construction, 
$(-\Delta)^s v=0$ in~$B_1$, and $v\in L^\infty(B_1)$. Hence, we are in position to apply Lemma \ref{APPR:L} 
to the function $v$, obtaining that $v\in C^1(B_{1/2})$ and 
\begin{equation}\label{alfa0}
\|\nabla v\|_{L^\infty(B_{1/2})}\le C\left( \|v\|_{L^\infty(B_1)}+ 
\int_{\R^n} \frac{|v(x)|}{1+|x|^{n+2s}}\,dx
\right),\end{equation}
for some~$C>0$. Now we observe that 
\begin{equation}\label{alfa4}
\|v\|_{L^\infty(B_1)}=\|u\|_{L^\infty(B_t(q))} \quad {\mbox{ and }} \quad
\|\nabla v\|_{L^\infty(B_{1/2})}= t\|\nabla u\|_{L^\infty(B_{t/2}(q))}.
\end{equation}
Moreover, using the change of variable $y=tx+q$, 
\begin{equation}\begin{split}\label{alfa}
\int_{\R^n} \frac{|v(x)|}{1+|x|^{n+2s}}\,dx &=\, \int_{\R^n} \frac{|u(tx+q)|}{1+|x|^{n+2s}}\,dx \\
&=\, t^{2s}\int_{\R^n} \frac{|u(y)|}{t^{n+2s}+|y-q|^{n+2s}}\,dy. 
\end{split}\end{equation}
We observe that 
\begin{equation}\begin{split}\label{alfa1}
& t^{2s}\int_{B_t(q)} \frac{|u(y)|}{t^{n+2s}+|y-q|^{n+2s}}\,dy 
\le t^{2s}\|u\|_{L^\infty(B_t(q))}\int_{B_t(q)} \frac{dy}{t^{n+2s}}\\
& \qquad \le C t^{2s} \|u\|_{L^\infty(B_t(q))}
t^{n} t^{-n-2s} \le C\|u\|_{L^\infty(B_t(q))},
\end{split}\end{equation}
up to renaming $C$. Also,  
$$
\int_{\R^n\setminus B_t(q)} \frac{|u(y)|}{t^{n+2s}+|y-q|^{n+2s}}\,dy \le 
\int_{\R^n\setminus B_t(q)} \frac{|u(y)|}{|y-q|^{n+2s}}\,dy.  
$$
Using this and \eqref{alfa1} into \eqref{alfa} we obtain that 
$$ \int_{\R^n} \frac{|v(x)|}{1+|x|^{n+2s}}\,dx\le C
\|u\|_{L^\infty(B_t(q))} + t^{2s}\int_{\R^n\setminus B_t(q)} \frac{|u(y)|}{|y-q|^{n+2s}}\,dy.$$ 
Plugging this and \eqref{alfa4} into \eqref{alfa0} we get the 
desired result. 
\end{proof}

Now we can complete the proof of Theorem~\ref{DEC}.

\begin{proof}[Proof of Theorem~\ref{DEC}]
We define, for any~$r>0$,
\begin{equation}\label{rescaled}
u_r(x):=r^{\frac{\sigma}{2}-s} u(rx)\;{\mbox{ and }}\;
E_r:= \frac{1}{r}E
\end{equation}
and we apply Corollary~\ref{67dfffsjjsj}
to the minimizing pair~$(u_r,E_r)$, with~$r\in(0,1/2]$. 
For this, notice that 
$$ \int_{\R^n} \frac{|u_{r}(y)|}{1+|y|^{n+2s}}\,dy = 
r^{s+\frac{\sigma}{2}}\int_{\R^n} \frac{|u(y)|}{r^{n+2s}+|y|^{n+2s}}\,dy \le \Lambda_{r}, $$
for some $\Lambda_{r}>0$ depending on $r$. 
Then, \eqref{CD-2} follows from~\eqref{CD-3-XX}. 
Also,~\eqref{CD-2-BIS} is a consequence of~\eqref{CD-4-XX}.

Now we prove~\eqref{CD-1}.
For this, 
since Theorem~\ref{DEC} deals with interior estimates,
we may suppose that
\begin{equation}\label{688}
{\mbox{the minimizing property of~$(u,E)$
holds in~$B_{72}$ instead of~$B_1$.}}\end{equation}
Now we assume that~$s>\sigma/2$ and we fix~$x$, $y\in B_{1/2}$.
We claim that
\begin{equation}\label{tyuiodd}
|u(x)-u(y)|\le C\,|x-y|^{s-\frac\sigma2}.\end{equation}
Let~$p:=(x+y)/2$ and~$r:=|x-y|$.
Notice that we may suppose that
\begin{equation}\label{689}
r\le \frac{1}{100},\end{equation}
otherwise the fact that~$|u(x)-u(y)|\le 2\|u\|_{L^\infty(B_{1/2})}$ would give~\eqref{tyuiodd}.
Then, there are three possibilities:
\begin{eqnarray}
\label{PP-1} && B_{5r}(p)\setminus E =\varnothing,\\
\label{PP-2} && B_{5r}(p)\setminus E\ne \varnothing \ {\mbox{ and }}
u(x)=u(y)=0,
\\
\label{PP-3}&& B_{5r}(p)\setminus E\ne \varnothing \ {\mbox{
and either~$u(x)>0$
or~$u(y)>0$.}}
\end{eqnarray}
If~\eqref{PP-2} holds true then~\eqref{tyuiodd} is obvious,
therefore we consider only the possibilities~\eqref{PP-1}
and~\eqref{PP-3}.

If~\eqref{PP-1} holds, we consider $R>0$ such that $d(p)=R$, 
where we recall that $d(x)=\dist(x,\partial E)$ 
denotes the distance of $x$ from $\partial E$. By \eqref{PP-1}, we have that $5r<R<2$. 

Also, we have that 
\begin{equation}\label{wer21}
B_{10}(p)\subset B_{12},\end{equation} 
indeed if $x\in B_{10}(p)$ then 
$$ |x|\le |x-p|+|p|< 10+2=12,$$
and so $x\in B_{12}$. This proves \eqref{wer21}. 
Therefore, recalling \eqref{688} and using Corollary \ref{67dfffsjjsj}, 
we see that 
\begin{equation}\label{po10}
|u(x)|\le K (d(x))^{s-\frac{\sigma}{2}}, \quad {\mbox{ for any }} x\in B_{10}(p). 
\end{equation}
Also, if $x\in B_R(p)$, 
$$ d(x)\le |x-p| + d(p) \le R +R =2R, $$
therefore, from \eqref{po10} (recall that $R<2$), we obtain that 
\begin{equation}\label{po11}
|u(x)|\le K R^{s-\frac{\sigma}{2}}, \quad {\mbox{ for any }} x\in B_R(p), 
\end{equation}
up to renaming $K$. 

Now we use Lemma~2.3 in~\cite{DV}
and we obtain that~$(-\Delta)^s u=0$ in~$B_{R/2}(p)$. 
Moreover, from Corollary \ref{67dfffsjjsj} and recalling \eqref{688} and \eqref{wer21}, we have that 
$\|u\|_{L^\infty(B_{R/2}(p))}\le K$. So we are in position to apply Corollary \ref{coro:rescaled} 
(with $t:=R/2$ and $q:=p$), 
thus obtaining that
\begin{equation}\label{piou7680p-2}
\|\nabla u\|_{L^\infty(B_{R/4}(p))}\le C\, R^{-1}\left(
\|u\|_{L^\infty(B_{R/2}(p))}+ R^{2s}\int_{\R^n\setminus B_{R/2}(p)} \frac{|u(x)|}{|x-p|^{n+2s}}\,dx
\right).\end{equation}
We notice that, by \eqref{po11}, 
\begin{equation}\label{piou7680p}
\|u\|_{L^\infty(B_{R/2}(p))} \le K R^{s-\frac{\sigma}{2}}. 
\end{equation}
Moreover, 
\begin{equation}\begin{split}\label{qwer5432-2}
& \int_{\R^n\setminus B_{R/2}(p)} \frac{|u(x)|}{|x-p|^{n+2s}}\,dx \\=\,& 
\int_{\R^n\setminus B_{10}(p)} \frac{|u(x)|}{|x-p|^{n+2s}}\,dx
+ \int_{B_{10}(p)\setminus B_{R/2}(p)} \frac{|u(x)|}{|x-p|^{n+2s}}\,dx. \end{split}\end{equation}
Now we observe that, if $x\in \R^n\setminus B_{10}(p)$, then $10<|x-p|\le|x|+|p|\le|x|+2$, 
and so $|x|>8$. Hence, 
$$ |x-p|\ge |x|-2 =\frac34 |x| +\frac14|x| -2 \ge \frac34 |x| +2-2=\frac34 |x|. $$
Therefore, 
\begin{equation}\label{qwer5432}
\int_{\R^n\setminus B_{10}(p)} \frac{|u(x)|}{|x-p|^{n+2s}}\,dx
\le 
C\,\int_{\R^n\setminus B_{10}(p)} \frac{|u(x)|}{|x|^{n+2s}}\,dx\le C\,\Lambda. \end{equation} 
Furthermore, using \eqref{po10}, we obtain that, if $x\in B_{10}(p)\setminus B_{R/2}(p)$, then 
$$ |u(x)|\le K (d(x))^{s-\frac{\sigma}{2}} \le K\left(|x-p| + d(p)\right)^{s-\frac{\sigma}{2}}
= K\left(|x-p| + R\right)^{s-\frac{\sigma}{2}}.$$
As a consequence, 
$$
\int_{B_{10}(p)\setminus B_{R/2}(p)} \frac{|u(x)|}{|x-p|^{n+2s}}\,dx \le 
K\,\int_{B_{10}(p)\setminus B_{R/2}(p)}
\frac{\left(|x-p| + R\right)^{s-\frac{\sigma}{2}}}{|x-p|^{n+2s}}\,dx.
$$
So, by making the change of variable $y=(x-p)/R$, we obtain
\begin{eqnarray*}
&& \int_{ B_{10}(p)\setminus B_{R/2}(p) } \frac{|u(x)|}{|x-p|^{n+2s}}\,dx \le 
K\,\int_{B_{10/R}\setminus B_{1/2}}  
\frac{\left(R|y| + R\right)^{s-\frac{\sigma}{2}}}{R^{n+2s}|y|^{n+2s}}R^n\,dy \\
&&\qquad \le K\, R^{s- \frac{\sigma}{2} -2s } \int_{ \R^n \setminus B_{1/2} }
\frac{ (|y|+1)^{ s-\frac{\sigma}{2} } }{ |y|^{n+2s} }\,dy 
\le CK \, R^{ s- \frac{\sigma}{2} - 2s },
\end{eqnarray*}
for some $C>0$. This information, \eqref{qwer5432} and \eqref{qwer5432-2} give 
$$ \int_{\R^n\setminus B_{R/2}(p)} \frac{|u(x)|}{|x-p|^{n+2s}}\,dx \le 
C\,\Lambda + CK \, R^{ s- \frac{\sigma}{2} - 2s }.$$ 
Plugging this and \eqref{piou7680p} into \eqref{piou7680p-2}, we conclude that
\begin{equation}\label{234qweyuo}
\|\nabla u\|_{L^\infty(B_{R/4}(p))}\le C\, R^{-1}\left(
K R^{s-\frac{\sigma}{2}} + 
C\,\Lambda\, R^{2s} + CK \, R^{ s- \frac{\sigma}{2}}\right) 
\le C\, R^{s-\frac{\sigma}{2}-1},\end{equation}
up to relabelling $C$ (recall that $R<2$). 

From \eqref{234qweyuo} we obtain that, for any $a$, $b\in B_{R/4}(p)$,  
$$ |u(a)-u(b)|\le C \, R^{s-\frac{\sigma}{2}-1} |a-b|.$$
Since $R>5r$, we have that $x$, $y\in B_{R/4}(p)$, and so 
\begin{eqnarray*}
&& |u(x)-u(y)|\le C \, R^{s-\frac{\sigma}{2}-1} |x-y| 
\\&&\qquad = C \, R^{s-\frac{\sigma}{2}-1} r^{1-s+\frac{\sigma}{2}}|x-y|^{s-\frac{\sigma}{2}} 
\le C|x-y|^{s-\frac{\sigma}{2}},\end{eqnarray*}
where $C$ may change from step to step. 
This says that~\eqref{tyuiodd}
holds true in this case. 

Now let us suppose that~\eqref{PP-3} holds true.
Then there exist~$z\in B_{5r}(p)\setminus E$
and~$\eta\in\{x,y\}$ such that~$u(\eta)>0$.
In particular~$\eta\in E$ and so there exists~$\zeta$ on the segment
joining~$\eta$ and~$z$ such that~$\zeta\in\partial E$. Notice that,
since the ball is convex, we have that~$\zeta\in B_{5r}(p)$.

Hence, we have the following picture:
$\zeta\in \partial E$, $x$ and~$y$ lie in~$B_{3r}(p)$
and~$B_{1}(\zeta)\subseteq B_{2}$ (where the minimization property
holds, recall~\eqref{688} and~\eqref{689}).

Thus, with a slight abuse of notation, we suppose, up to
a translation, that~$\zeta=0$. So our picture
becomes that~$0\in \partial E$, $x$ and~$y$ lie in~$B_{10r}$,
with our minimizing property in~$B_1$.

So we consider the minimizing pair~$(u_r,E_r)$
as in~\eqref{rescaled}, which is minimizing in~$B_{1/r}\supseteq B_{50}$
(recall~\eqref{689}).
In this way,
we apply formula~\eqref{CD-4-XX}, thus obtaining
$$ \| u_r\|_{L^\infty(B_{25})}\le K .$$ 
Notice that~$r^{-1} x$, $r^{-1} y\in B_{10}\subset B_{25}$, hence
$$ |u_r(r^{-1}x)|+|u_r(r^{-1}y)|\le 2K.$$
So we obtain
\begin{eqnarray*}
&& |u(x)-u(y)| = r^{s-\frac\sigma2} |u_r(r^{-1}x)- u_r(r^{-1}y)|
\\ &&\qquad\le 2K r^{s-\frac\sigma2}=2K |x-y|^{s-\frac\sigma2}.
\end{eqnarray*}
This proves~\eqref{tyuiodd}, which in turn implies~\eqref{CD-1},
thus completing the proof of
Theorem~\ref{DEC}.
\end{proof}

We complete this paper with a brief comment about the two-phase case
(i.e. when the function~$u$ in Theorem~\ref{DEC}
is not assumed to be nonnegative to start with).
The additional difficulties in this setting arise
since the fractional harmonic replacements do not behave nicely
with respect to the operation of taking the positive part,
namely the positive part of the harmonic replacement
is not necessarily
harmonic in its positive set. As an example,
considering the fractional harmonic replacement
introduced in \cite{DV},
one can consider the fractional harmonic function~$u(x):=x_+^s-1$
in~$(0,+\infty)$,
with fixed boundary data in~$(-\infty,0)\cup(1,+\infty)$;
similarly, in the case of the fractional harmonic replacement
introduced here in Section~\ref{FHR},
one can consider the case~$s=1/2$ and the harmonic function
on~$\R^2$ given by~$u(x,y)=xy$.

This difficulty arising at the level of the fractional replacements
in the two-phase problem
reflects also into the proof of the density estimates here
(precisely in the computations below \eqref{fe7d}
and~\eqref{89UhgfdFG}).

For these reasons, we believe that the investigation
of density estimates and continuity properties for two-phase fractional
minimizers is an interesting open problem.

\end{document}